\renewcommand*{\@fnsymbol}[1]{\ensuremath{\ifcase#1 \or
		\or \ddagger\ddagger \else\@ctrerr\fi}}
\newlength{\noteWidth}
\long\def\notes#1{\ifinner
           {\footnotesize #1}
           \else
           \marginpar{\parbox[t]{\noteWidth}{\raggedright\footnotesize #1}}
       \fi\typeout{#1}}
 \def\notes#1{\typeout{read notes: #1}}  
\def\urls#1{{\small \url{#1}}}
\def\spm#1{\notes{SPM:  #1}}
\def\ad#1{\notes{AD:  #1}}
\def\bl#1{{\color{blue}#1}}
\def\rd#1{{\color{red}#1}}
\def\Ebox#1#2{%
\begin{center}
\includegraphics[width= #1\hsize]{#2} 
\end{center}}
\newcounter{rmnum}
\newenvironment{romannum}{\begin{list}{{\upshape (\roman{rmnum})}}{\usecounter{rmnum}
\setlength{\leftmargin}{16pt}
\setlength{\rightmargin}{2pt}
\setlength{\itemindent}{10pt}
}}{\end{list}}
\def\Noise{\Delta}
\def\lambdamax{\lambda_{\text{\rm\scriptsize max}}}
\def\pie{\varpi}
\def\barf{\bar{f}}
\def\U{{\sf U}}
\def\state{{\sf X}}
\def\tilA{\tilde A}
\def\tilb{\tilde b}
\def\tiltheta{\tilde\theta}
\def\tilclL{\tilde \clL}
\def\trace{\text{\rm trace}\,}
\def\ddt{{\mathchoice{\FRAC{1}{d}{dt}}%
{\FRAC{1}{d}{dt}}%
{\FRAC{3}{d}{dt}}%
{\FRAC{3}{d}{dt}}}}
\def\eqdef{\mathbin{:=}}
\def\sq{$\Box$}
\def\qed{\ifmmode\Box\else{\unskip\nobreak\hfil
\penalty50\hskip1em\null\nobreak\hfil\sq
\parfillskip=0pt\finalhyphendemerits=0\endgraf}\fi\par\medbreak}
\def\argmin{\mathop{\rm arg{\,}min}}
\def\bfDelta{\bfmath{\Delta}}
\def\bfalpha{\bfmath{\alpha}}
\def\bfmA{\bfmath{A}}
 \DeclareBoldMathCommand\bfmclM{{\cal M}}
 \DeclareBoldMathCommand\bfmell{\ell}
 \DeclareBoldMathCommand\bflambda{\lambda}
 \DeclareBoldMathCommand\bfzeta{\zeta}
 \DeclareBoldMathCommand\bfmp{p}
 \DeclareBoldMathCommand\bfmr{r}
 \DeclareBoldMathCommand\bfmy{y}
\def\nd{\ell}  
\def\ninp{\ell_u}
\def\bfmath#1{{\mathchoice{\mbox{\boldmath$#1$}}%
{\mbox{\boldmath$#1$}}%
{\mbox{\boldmath$\scriptstyle#1$}}%
{\mbox{\boldmath$\scriptscriptstyle#1$}}}}
\def\Lemma#1{Lemma~\ref{#1}}
\def\Prop#1{Proposition~\ref{#1}}
\def\Section#1{Section~\ref{#1}}
\def\Fig#1{Fig.~\ref{#1}}
\def\basis{\psi}
\def\clB{{\cal B}}
\def\clE{{\cal E}}
\def\clF{{\cal F}}
\def\clL{{\cal L}}
\def\clM{{\cal M}}
\def\clX{{\cal X}}
\def\hab{\widehat b}
\def\haA{\widehat A}
\def\haB{\widehat B}
\def\haD{\widehat D}
\def\haM{\widehat M}
\def\haW{\widehat W}
\def\haSigma{\widehat \Sigma}
\def\bfmxi{{\mbox{\protect\boldmath$\xi$}}}
\def\bfmX{{\mbox{\protect\boldmath$X$}}}
\def\half{{\mathchoice{\textstyle \frac{1}{2}}%
{\frac{1}{2}}%
{\hbox{\tiny $\frac{1}{2}$}}%
{\hbox{\tiny $\frac{1}{2}$}} }}
\def\ddt{\frac{d}{dt}}
\def\transpose{{\hbox{\tiny\it T}}}
\def\epsy{\varepsilon}
\def\varble{\,\cdot\,}
\def\Prob{{\sf P}}
\def\Expect{{\sf E}}
\newcommand{\field}[1]{\mathbb{#1}}
\renewcommand{\Re}{\mathbb{R}}
\def\ind{\field{I}}
\def\process{\clX}
\DeclareBoldMathCommand\bfprocess{{\cal X}}
\def\bartheta{\bar\theta}
\def\mtum{\mu}
\def\Mtum{M}
\def\barPhi{\overline{\Phi}}
\def\Ascale{\zeta}
\newsavebox{\junk}
\savebox{\junk}[1.6mm]{\hbox{$|\!|\!|$}}
\title{Optimal Matrix Momentum Stochastic Approximation 
\\
and Applications to Q-learning}
\thanks{Funding from the National Science Foundation award EPCN 1609131,   and French National Research Agency grant ANR-16-CE05-0008 \newline
$^{1}$Department of Electrical and Computer Engineering, University of Florida, Gainesville, FL 32611
\newline$^{2}$Inria and the Computer Science Department of \'Ecole Normale Sup\'erieure, 75005 Paris, France
}
\begin{document}

\maketitle
	
\begin{abstract}

Acceleration is an increasingly common theme in the stochastic optimization literature.   The two most common examples are Nesterov's method, and Polyak's momentum (heavy ball) technique.  In this paper two new algorithms are introduced for root finding problems:   1) PolSA is a root finding algorithm with specially designed matrix momentum,  and 2) NeSA can be regarded as a variant of Nesterov's algorithm, or a simplification of the PolSA algorithm.  
 The PolSA algorithm is new even in the context of
optimization (when cast as a root finding problem).

The research surveyed in this paper is motivated by applications to reinforcement learning.  It is well known that most variants of TD- and Q-learning may be cast as SA (stochastic approximation) algorithms, and the tools from general SA theory can be used to investigate convergence and bounds on convergence rate.   In particular, the asymptotic variance is a common metric of performance for SA algorithms, and is also one among many metrics used in assessing the performance of stochastic optimization algorithms.

There are two well known stochastic approximation techniques that are known to have optimal asymptotic variance:  the Ruppert-Polyak averaging technique, and stochastic Newton-Raphson (SNR).

The former algorithm can have extremely bad transient performance, and the latter can be computationally  expensive.   It is demonstrated here that parameter estimates from the new PolSA algorithm \emph{couple} with those of the ideal (but more complex) SNR algorithm.   The new algorithm is thus a third approach to obtain optimal asymptotic covariance.   

These strong results require assumptions on the model.  A linearized model is considered, and the noise is assumed to be a martingale difference sequence.
Numerical results are obtained in  a non-linear setting that is the motivation for this work:   In PolSA implementations of  Q-learning [a nonlinear algorithm]  it is  observed 
that coupling occurs  with SNR in this non-ideal setting.  The performance of NeSA is also very good compared to recent and standard variants of Q-learning.

\end{abstract}

\newpage

%
%
%

%
%
%
%
%
%
%
%
%
%
%
%
%
%
%
%

\section{Introduction}
\label{s:intro}


The general goal of this paper is the efficient computation of the root of a vector valued function:
obtain the solution $\theta^*\in\Re^d$ to the $d$-dimensional equation:
\begin{equation}
\barf(\theta^*) =0 .
\label{e:goal}
\end{equation}
It is assumed that the function  $\barf\colon\Re^d\to\Re^d$ is expressed as an expectation:
$\barf(\theta) = \Expect[f(\theta,\process)]$,  where  $f\colon\Re^d\times\Re^m \to \Re^d$ and $\process $ is an $ \Re^m$-valued  random variable. \textit{The function $\barf$ is not necessarily equal to a gradient, so the setting of this paper goes beyond optimization.}

The stochastic approximation (SA) literature contains a large collection of tools to construct algorithms,  and obtain bounds on their convergence rate.   
In this paper we show how algorithms with optimal rate of convergence can be constructed based on a synthesis of techniques from classical SA theory combined with variants of momentum algorithms pioneered by Polyak  (\cite{pol64,pol87}).

The algorithms and analysis in this paper admit application to both stochastic optimization and reinforcement learning.     
As in much of this literature, it is assumed in this paper that  there is a sequence of random functions $\{f_n\}$ satisfying  for each $\theta\in\Re^d$, 
\begin{equation}
\barf(\theta)
=
\lim_{n\to\infty} \frac{1}{n}\sum_{k=1}^{n} f_k(\theta)
=
\lim_{n\to\infty} \Expect[f_n(\theta)]  \,,
\label{e:barf}
\end{equation}
where the first limit is in the a.s.\ sense.  

Three general classes of algorithms are investigated in this work.   Each is defined with respect to a non-negative scalar gain sequence $\{\alpha_n\}$,  and two include $d\times d$ matrix sequences $\{G_n\}, \{\Mtum_n\}$.
  For each algorithm,    the difference sequence is denoted $
\Delta\theta_n \eqdef \theta_n-\theta_{n-1}$,  $ n\ge 0$,
with given initial condition $\theta_0 =\theta_{-1}$.

\vspace{0.1in}
\paragraph{1.  Stochastic approximation with matrix gain} 
\begin{equation} 
\Delta\theta_{n+1} =  \alpha_{n+1} G_{n+1} f_{n+1}(\theta_n)\,
\label{e:SAa}
\end{equation} 

\paragraph{2.  Matrix Heavy-Ball Stochastic approximation} 
\begin{equation}
	\Delta\theta_{n+1} =   \Mtum_{n+1} \Delta\theta_n  
		+   \alpha_{n+1}  G_{n+1} f_{n+1}(\theta_n)
\label{e:PolSAa}
\end{equation}

\paragraph{3.  Nesterov Stochastic approximation (NeSA)}
For a fixed scalar $\Ascale>0$,
\begin{equation}
\begin{aligned}
\Delta\theta_{n+1} =  \Delta\theta_n 
& + \Ascale [ f_{n+1}(\theta_n)-f_{n+1}(\theta_{n-1}) ]
 + \Ascale \alpha_{n+1} f_{n+1}(\theta_n)
\label{e:NeSA}
\end{aligned}
\end{equation}

If $G_n\equiv I$,  then  \eqref{e:SAa} is the classical algorithm of  
\cite{robmon51a}.  In  Stochastic Newton Raphson (SNR)  and the more recent \textit{Zap SNR} (\cite{rup85,devmey17b,devmey17a}),  the matrix sequence $\{G_n\}$ is chosen to be an approximation of $-[\partial \barf(\theta_n)]^{-1}$.    Stability of the algorithm has been demonstrated in application to Q-learning  (\cite{devmey17b,devmey17a});  a non-trivial result, given that Q-learning is cast as root finding and not an optimization problem. 

%

The  matrix heavy ball algorithm \eqref{e:PolSAa} coincides with the heavy-ball method when $\{M_n\}$ is a sequence of scalars  (\cite{pol64,pol87,loiric17}).     Justification for the special form \eqref{e:NeSA} in NeSA is provided in the next section.

As in many previous papers in the context of high-dimensional optimization (\cite{loiric17}) and SA  (\cite{kontsi04,kusyin97,bor08a}), parameter error analysis is restricted to a linear setting:
\begin{equation*}
 f_{n+1} (\theta_n) = A_{n+1} \theta_n -b_{n+1} =  A(\theta_{n} - \theta^*) + \Noise_{n+1}
\end{equation*}
in which $(A_n,b_n)$ is a stochastic process with common mean $(A,b)$, and  for $n \geq 1$     
\begin{equation*}
\Noise_{n+1} \eqdef          \tilA_{n+1} (\theta_n - \theta^*)  +\Noise_{n+1}^* \quad \textit{with}\quad \Noise_{n+1}^* \eqdef  f_{n+1}(\theta^*) = A_{n+1} \theta^* - b_{n+1}
\end{equation*}
and the tilde always denotes deviation: 
$
\tilA_{n+1} \eqdef A_{n+1} - A$.

\paragraph{Goals} 
The main goal is to design algorithms with  (i)  \textit{fast convergence} to zero of the error sequence: $ \tiltheta_n \eqdef \theta_n - \theta^*$,  and (ii) \textit{low computational complexity}.


Rates of convergence are well understood for the SA recursion \eqref{e:SAa}. It is known that the 
Central Limit Theorem and Law of the Iterated Logarithm hold under general conditions, and the \textit{asymptotic covariance} appearing in these results can be expressed as the limit
\begin{equation}
\Sigma^\theta=\lim_{n\to\infty} \Sigma^\theta_n \eqdef \lim_{n\to\infty} n \Expect[\tiltheta_n\tiltheta_n^\transpose]\, .
\label{e:SAsigma}
\end{equation}   
The LIL    may be most interesting in terms of bounds (\cite{kusyin97,kovsch03});  it may not be as satisfying as a   Hoeffding or PAC-style finite-$n$ bound, but   there are no such bounds for Markovian models with useful constants (see e.g. \cite{glyorm02});
  reinforcement learning is typically cast in a Markov setting.


A \textit{necessary} condition for quick convergence is that the CLT or LIL hold with small asymptotic covariance.     Again, for the SA recursion \eqref{e:SAa},  optimization of this parameter is well-understood.  Denote by  $ \Sigma^G$ the asymptotic covariance for  \eqref{e:SAa} 
with $G_n \equiv  G$.   When this is finite, it admits a representation in terms of  the asymptotic covariance of the noise:
\begin{equation}
\Sigma^\Delta 
= \lim_{n\to\infty} \frac{1}{n} \Expect\Bigl[ \,\, \Bigl(\sum_{k=1}^n  \Noise_k^*\Bigr)   \Bigl(\sum_{k=1}^n  \Noise_k^*\Bigr) ^\transpose  \,\, \Bigr] 
\label{e:SigmaDelta}
\end{equation} 
In particular, the choice $G = G^* \eqdef -A^{-1}$ is a special case of SNR, for which asymptotic covariance admits the explicit form
\begin{equation}
\Sigma^* \eqdef A^{-1} \Sigma^\Delta {A^{-1}}^{\transpose}  
\label{e:SigmaStar}
\end{equation} 
This is optimal:   the difference $ \Sigma^G - \Sigma^*$ is positive semi-definite for any $G$ (\cite{benmetpri90,kusyin97,bor08a}).  

\textit{What about computational complexity?}
In realistic applications of SNR, the gain sequence will be of the form $G_n=-\haA_n^{-1}$, where $\{\haA_n\}$ are approximations 
(Monte-Carlo estimates) of  the mean $A$.  In a nonlinear model,   $\haA_n$
is an approximation of $\partial \barf(\theta_n)$, obtained using the two time-scale algorithm of \cite{devmey17b,devmey17a}.  
  The resulting complexity is a barrier to application in high dimension.  Steps towards resolving this  obstacle are presented in this paper:
\begin{romannum}
\item 
The parameters in the matrix heavy ball SA algorithm can be designed so that the error sequence enjoys all the attractive properties of SNR, but without the need for   matrix inversion.

\item  NeSA   is often simpler than the matrix heavy ball method in applications to RL.
A formula for the asymptotic covariance of a variant of NeSA is obtained in this paper.   
While not equal to $\Sigma^*$,  the reduced complexity makes it a valuable option. 
\end{romannum}
These conclusions are established in Propositions~\ref{t:couple},  \ref{t:OptVarZapHBAn}
and \ref{t:OptVarZapHB}  for linear recursions,
and illustrated in numerical examples for new Q-learning algorithms that are introduced in \Section{s:num}.
The assumptions of the main results are violated in application to $Q$-learning since the particular root finding problem is non-linear.   Nevertheless, coupling is seen between PolSA and Zap Q-learning in all of the numerical experiments conducted so far.

 Nesterov's acceleration and the heavy-ball method are both second order algorithms, but their relationship has been unclear until now.   In this paper we propose a new understanding of the relationship, which is only possible through the introduction of \emph{matrix momentum}.   
We show that the matrix momentum algorithm PolSA can be interpreted as a linearization of a particular formulation of Nesterov's method.  We further show that the PolSA algorithm approximates (stochastic) Newton Raphson, thus establishing connections between the three algorithms: Nesterov's accleration, PolSA, and Newton Raphson. This may not only help explain the success of Nesterov's acceleration, but may also lead to new algorithms in other application domains such as empirical risk minimization (ERM).\footnote{The key theoretical results in this paper \textit{are not} directly applicable to these problems -- an explanation is given in \Section{s:app}. }

\paragraph{Literature survey}   

The present paper is built on a vast literature on optimization  (\cite{nes83,pol64,pol87,nes12})  and stochastic approximation
(\cite{kontsi04,kusyin97,bor08a,rup85,pol90,poljud92}).  The work of Polyak is central to both thrusts:  the introduction of momentum, and techniques to minimize variance in SA algorithms. The reader is referred to (\cite{devmey17a})  for a survey on SNR and the more recent Zap SNR algorithms, which are also designed to achieve minimum asymptotic variance.
 
In the stochastic optimization literature, the goal is to minimize an expectation of a function. In connection to \eqref{e:barf}, each $f_n$ can be viewed as an unbiased estimator of the gradient of the objective. The papers  (\cite{moubac11,bacmou13,gadpansaa18,duc16,jaikakkidnetsid17}) establish the optimal  convergence rate of $O(1/\sqrt{n})$ for various stochastic optimization algorithms.   
 



In ERM (empirical risk minimization) literature, the sample path limit in \eqref{e:barf} is replaced by a finite average
(\cite{Katyusha16,SAGA14,jaikakkidnetsid17}):
$
\barf_n(\theta) =  n^{-1}\sum_{k=0}^{n-1} f_k(\theta)$. Denoting 
$\theta_n^* = \argmin_\theta \barf_n(\theta) $,  
under general conditions it can be shown that the sequence of ERM optimizers  $\{\theta_n^*\}$ is convergent to $\theta^*$, and has optimal asymptotic covariance  (a survey and further discussion   is presented in  \cite{jaikakkidnetsid17}).

The   recent paper  \cite{jaikakkidnetsid17} is most closely related to the present work, considering the shared goal of optimizing the asymptotic covariance, along with rapidly vanishing transients through algorithm design. The paper restricts to stochastic optimization rather than the general root finding problems considered here, thus ruling out application to many reinforcement learning algorithms such as TD- and Q- learning (\cite{tsi94a,tsiroy97a,kontsi04}).
The metric for performance is slightly different,   focusing on the rate of convergence of the \textit{expected loss},  for which they obtain bounds for each iteration $n$ of the algorithm.   Along with establishing that the algorithm achieves optimal asymptotic variance, they also obtain tight bounds on the regret. The algorithm uses an averaging technique similar to the one in (\cite{poljud92}), that helps achieve the optimal variance.

The algorithms presented in this work achieve the optimal asymptotic covariance, 
are not restricted to  optimization, 
 and we believe that in many applications they will be simpler to implement.  This is especially true for the NeSA algorithm  applied to Q-learning.

\section{Momentum methods and applications}  
\label{s:momentum}

\subsection{Motivation \& Insights}   


Consider first the deterministic root-finding problem.  This will bring insight into the relationship between the three algorithms  (\ref{e:SAa}--\ref{e:NeSA})  discussed in the introduction. 
The notation $f: \Re^d \to \Re^d$ is used in place of $\barf$ in this deterministic setting.   
The goal remains the same:  find the vector $\theta^*\in\Re^d$ such that $f(\theta^*) =0$.

Deterministic variants of (\ref{e:SAa}--\ref{e:NeSA})   commonly considered in the literature are, respectively,  
\begin{align}
 &\text{\bf Successive approximation:} 
\qquad  
\Delta\theta_{n+1} =  \alpha f (\theta_n)
\label{e:GD}
\\
 &\text{\bf Polyak's heavy ball:} 
\qquad\qquad  
\Delta\theta_{n+1} =  \mtum\Delta\theta_n  + \alpha f (\theta_n)
\label{e:HB}
\\
 &\text{\bf Nesterov's acceleration:} 
\qquad  \Delta\theta_{n+1} =  \mtum  \Delta\theta_n  + \Ascale [ f(\theta_n) -  f(\theta_{n-1})]  + \alpha f (\theta_n)
\label{e:NA}
\end{align} 
where $\alpha,\mtum,\Ascale$ are positive constants.
Nesterov's algorithm was designed for extremal seeking, which is the special case $f=-\nabla J$ for a real-valued function $J$.   The recursion \eqref{e:NA} is the natural extension to the root-finding problem considered here.


The questions asked in this paper are posed in a stochastic setting, 
but analogous questions are: 
\begin{romannum} 
\item   Why restrict to a scalar momentum term $\mtum$, rather than a matrix $\Mtum$?


\item  Can online algorithms be designed to approximate the optimal momentum matrix?    If so, we   require tools to 
  investigate the performance of a given matrix sequence $\{\Mtum_n  \}$:
\begin{equation}
\Delta\theta_{n+1} =  \Mtum_{n+1}\Delta\theta_n + \alpha f (\theta_n)
\label{e:HBmatrix}
\end{equation} 
\end{romannum}
Potential answers are obtained by establishing   relationships between these deterministic recursions.  
{The heuristic relationships presented here are   justified for the stochastic models considered later in the paper.}

Consider the successive approximation algorithm \eqref{e:GD} under the assumption of global convergence:  $\theta_n\to\theta^* $  as $n\to\infty$.    Assume moreover that   $f \in C^1$ and Lipschitz, 
so that
\[
\begin{aligned}
\Delta\theta_{n+1} -\Delta\theta_n   & \approx  \alpha  \partial f\, (\theta_n) \Delta\theta_n
\\
& =   \alpha^2 \partial f\, (\theta_n)  f (\theta_{n-1})
\end{aligned}
\]
\spm{We would ned $d$ equations:  where each component of $\bartheta_n\in\Re^d$ lies on the line connecting the corresponding components of $\theta_{n+1}$ and $\theta_n$. 
}
 It follows that  
$
\| \Delta\theta_{n+1} -\Delta\theta_n\|  = O(\min\{\alpha^2,\alpha \|  \Delta\theta_n\| \})
$.
This suggests a \textit{heuristic}:   swap $\Delta\theta_{n+1} $ and $\Delta\theta_n$ in a given convergent algorithm to obtain a new algorithm that is simpler, but with desirable properties.  Applying this heuristic to   \eqref{e:HBmatrix} results in  
\[
\begin{aligned}
\Delta\theta_{n+1} 
& =  \Mtum_{n+1} {\Delta\theta_n} + \alpha f (\theta_n)
 \approx \Mtum_{n+1} {\Delta\theta_{n+1} } + \alpha f (\theta_n)
\end{aligned}
\]
Assuming that an inverse exists,  this becomes 
\[ 
 \Delta\theta_{n+1}   \approx     \alpha [I-\Mtum_{n+1}]^{-1}f (\theta_n)
\]
We thus arrive at a possible answer to the question of \emph{optimal momentum}:  
For the matrix sequence $ \Mtum_{n+1}    =  I + \alpha \partial f\, (\theta_n) $, the algorithm  \eqref{e:HBmatrix} can be expressed
\begin{equation}
\Delta\theta_{n+1} =  [I + \alpha \partial f\, (\theta_n) ]  \Delta\theta_n + \alpha f (\theta_n)
\label{e:OurLinearHBdet}
\end{equation}
The foregoing approximations suggest that this is an approximation of Newton-Raphson:
\[ 
 \Delta\theta_{n+1}   \approx  - [  \partial f \, (\theta_n)]^{-1}  f (\theta_n)
\]
Further approximations lead to different interpretations:
a
Taylor series argument shows that the recursion \eqref{e:OurLinearHBdet} is approximated by 
\begin{equation}
\Delta\theta_{n+1} =  \Delta\theta_n    + \alpha [f (\theta_n) -f (\theta_{n-1}) ]           
			     + \alpha f (\theta_n)
\label{e:HBNR}
\end{equation}
This is the special case of Nesterov's algorithm \eqref{e:NA} with $\mtum=1$ and $\Ascale = \alpha$.


Strong justification for the stochastic analog of \eqref{e:OurLinearHBdet} is provided through a coupling bound between the respective algorithms:  see \Prop{t:couple}.     It is found that similar transformations lead to new algorithms for reinforcement learning and other applications.  
\vspace{-0.1in}
\subsection{Optimal matrix momentum and PolSA}

Returning to the stochastic setting,  the PolSA algorithm considered in this paper is a special case of   matrix heavy ball SA \eqref{e:PolSAa}, and an analog of \eqref{e:OurLinearHBdet}:
\begin{equation}  
	\Delta\theta_{n+1} =   [I+    \Ascale \haA_{n+1}]  \Delta\theta_n  
		+   \alpha_{n+1}  \Ascale  f_{n+1}(\theta_n)
\label{e:PolSA} 
\end{equation} 
where $\Ascale>0$,  and $\{\haA_n\}$ are estimates of 
$
A(\theta)\eqdef \Expect[\partial f_n\, (\theta)]
$  (assumed independent of $n$).

The choice $G_n \equiv \Ascale I$ in \eqref{e:PolSAa} is imposed to simplify exposition;  in \Section{s:num} it is shown that a particular diagonal matrix gives much better performance in applications to Q-learning.

The main technical results are obtained for a linear model, so that 
\begin{align}
f_{n+1} (\theta_n)& = A_{n+1} \theta_n -b_{n+1} =  A(\theta_{n} - \theta^*) + \Noise_{n+1}
\label{e:fLin}
\\
\text{\it with} \quad
\Noise_{n+1}  & \eqdef          \tilA_{n+1} (\theta_n - \theta^*)  +\Noise_{n+1}^* \quad \textit{and}\quad \Noise_{n+1}^* \eqdef  f_{n+1}(\theta^*) = A_{n+1} \theta^* - b_{n+1}
\label{e:SigmaNoise}
\end{align}
In this case we denote $A=A(\theta)$.   Estimates are obtained  using
\begin{equation}
\haA_{n+1} = \haA_n + \frac{1}{n+1} (A_{n+1} - \haA_n)
\label{e:haAn}
\end{equation}
The SNR algorithm is \eqref{e:SAa} in which $G_n = \haA_n^{\dagger}$  (the Moore--Penrose pseudo inverse). 

Additional simplifying assumptions are imposed to ease analysis:
\begin{romannum}
\item[\textbf{(A1)}]  
The stochastic process  $(A_n,b_n)$ is wide-sense stationary, with common mean   $(A,b)$.

\item[\textbf{(A2)}]  $\{\tilA_n, \tilb_n\}$ are bounded martingale difference sequences, adapted to the filtration 
\begin{equation}
\clF_n \eqdef \sigma\{A_k,b_k : k\le n\}
\label{e:clF}
\end{equation}
\vspace{-0.4in}
\item[\textbf{(A3)}]{ For any eigenvalue $\lambda$ of $A$,
\begin{equation}
\text{Real}(\lambda)<0  \quad \text{and} \quad | 1+\Ascale \lambda | <1
\label{e:Aconds}
\end{equation}
}
\end{romannum} 
It is assumed without loss of generality that   $\Ascale =1$.  

Under Assumptions~\textbf{A1} and \textbf{A2}, the covariance matrix   in \eqref{e:SigmaDelta} can be expressed
\begin{equation}
\Sigma^\Delta = \Expect[ \Delta_{n+1}^* (\Delta_{n+1}^*) ^\transpose ]
\label{e:sigma_delta}
\end{equation}
The noise covariance corresponding to parameter estimate $\theta_n$ is denoted
\begin{equation}
\Sigma^\Delta_{n+1} = \Expect[ \Delta_{n+1} (\Delta_{n+1}) ^\transpose ]
\label{e:sigma_delta_n}
\end{equation}
 Under the assumption that $\lim_{n\to\infty} \theta_n = \theta^*$ in $L_2$ we obtain
 $\Sigma^\Delta_n = \Sigma^\Delta +o(1)$.

Even in the linear setting,   full stability and coupling arguments are not yet available because the assumptions do not ensure that $\haA_n^{-1}\to A^{-1}$ in $L_2$.   Analysis is restricted to the  simplified  SNR and PolSA algorithms, defined as follows:
\begin{eqnarray}  
\hspace{-0.25in} \text{\bf SNR:} \,\,\,\,	\Delta\theta^*_{n+1} = \hspace{-0.15in}&&-  \alpha_{n+1} A^{-1}  f_{n+1}(\theta_n^*)  
		\label{e:SNR_simple}  
		\\
\hspace{-0.25in} \text{\bf PolSA:} \,\,\,\,	\Delta\theta_{n+1} = \hspace{-0.15in}&&[I+     A]  \Delta\theta_n  
		+   \alpha_{n+1}  f_{n+1}(\theta_n)  
				\label{e:PolSA_simple}  
\end{eqnarray} 
For the linear model, the recursion \eqref{e:SNR_simple} becomes
\begin{equation}
\begin{aligned}
\Delta\theta^*_{n+1} 
	& = -  \alpha_{n+1} A^{-1}  [A_{n+1} \theta^*_n - b_{n+1} ]
	=   -\alpha_{n+1}  [ \tiltheta^*_n  - A^{-1} \Delta_{n+1}  ]
\end{aligned} 
\label{e:babySNR}
\end{equation}

The SNR algorithm is in some sense optimal under general conditions.   The proof of \Prop{t:snr}    is contained in Section~\ref{s:babySNR} of the Appendix.

\begin{proposition}
\label{t:snr}
Suppose that Assumptions A1--A3 hold.  Then, the following hold for the 
estimates $\{\theta^\bullet_n \}$ obtained from the
SNR algorithm and $\{\theta^*_n \}$ obtained from
\eqref{e:SNR_simple}: 
\begin{romannum}
\item  The representations hold:
\begin{align}
\tiltheta^\bullet_n  & =  - \haA_n^{-1}\frac{1}{n}  \sum_{k=1}^n \Delta^*_k  \qquad \text{whenever the inverse $\haA_n^{-1} $ exists}
\label{e:SNR_rep1}
\\
\tiltheta^*_n  & =   - A^{-1}\frac{1}{n}  \sum_{k=1}^n \Delta_k
\label{e:SNR_rep2}
 \end{align}
 Consequently, each converges to zero with probability one.

\item  The scaled covariances $\Sigma_n = n \Expect[\tiltheta^*_n(\tiltheta^*_n)^\transpose]$ 
and   $\Sigma_n^{22} = n^2 \Expect[\Delta\theta^*_n (\Delta\theta^*_n )^\transpose]$ satisfy
\begin{align}
\lim_{n\to\infty}
\Sigma_n = \lim_{n\to\infty}
\Sigma_n^{22} = \Sigma^* 
\label{e:SNR_cov_converges}
\end{align}
with $\Sigma^*=A^{-1} \Sigma^\Delta {A^{-1}}^{\transpose}$; the optimal covariance  \eqref{e:SigmaStar}.
\qed
\end{romannum} 
\end{proposition}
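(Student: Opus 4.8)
The plan is to exploit the step-size $\alpha_n=1/n$ to convert each recursion into a \emph{telescoping} identity, and then to read off both the almost-sure convergence and the covariance limits from the resulting closed forms. Throughout I use the martingale-difference structure of the noise supplied by Assumption~A2, and the fact that $A\theta^*=b$ (so that $\Delta^*_k=\tilA_k\theta^*-\tilb_k$ is a bounded martingale difference sequence).

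\emph{Representations (part (i)).} For the simplified-SNR recursion \eqref{e:babySNR} I substitute $\alpha_{n+1}=1/(n+1)$ and multiply through by $(n+1)$; this produces a first-order recursion for the scaled error $n\tiltheta^*_n$ that telescopes to \eqref{e:SNR_rep2}, namely $n\tiltheta^*_n=-A^{-1}\sum_{k=1}^n\Delta_k$. For the full SNR recursion --- equation \eqref{e:SAa} with $G_{n}=\haA_{n}^{\dagger}$, i.e. the $\haA$-analog of \eqref{e:SNR_simple} --- I multiply by $(n+1)\haA_{n+1}$ and insert the identity $(n+1)\haA_{n+1}=n\haA_n+A_{n+1}$ coming from \eqref{e:haAn}; the terms proportional to $A_{n+1}\theta^\bullet_n$ cancel, leaving the telescoping identity $(n+1)\haA_{n+1}\theta^\bullet_{n+1}-n\haA_n\theta^\bullet_n=b_{n+1}$. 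Summing gives $n\haA_n\theta^\bullet_n=\sum_{k=1}^n b_k$, and substituting $b_k=A_k\theta^*-\Delta^*_k$ together with $\haA_n=\frac1n\sum_{k=1}^n A_k$ produces \eqref{e:SNR_rep1}.

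\emph{Almost-sure convergence.} For \eqref{e:SNR_rep1} the argument is clean because the representation is driven only by the exogenous noise $\Delta^*_k$: it is a bounded martingale difference sequence, so $\frac1n\sum_{k}\Delta^*_k\to0$ a.s., while $\haA_n\to A$ a.s. and $A$ is invertible by A3, whence $\haA_n^{-1}\to A^{-1}$ and $\tiltheta^\bullet_n\to0$ a.s. The simplified case is more delicate, since \eqref{e:SNR_rep2} is written in terms of $\Delta_k=\tilA_k\tiltheta^*_{k-1}+\Delta^*_k$, which depends on the error itself. The key observation is that $\tiltheta^*_{k-1}$ is $\clF_{k-1}$-measurable, so $\{\Delta_k\}$ is \emph{again} a martingale difference sequence; after establishing a uniform bound $\sup_n\Expect[\|\tiltheta^*_n\|^2]<\infty$ by a Lyapunov argument that uses the contraction $|1+\lambda|<1$ in A3, the martingale SLLN gives $\frac1n\sum_k\Delta_k\to0$ a.s., whence $\tiltheta^*_n\to0$.

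\emph{Covariance limits (part (ii)), and the main obstacle.} Both limits are read off from $n\tiltheta^*_n=-A^{-1}\sum_{k=1}^n\Delta_k$. For $\Sigma_n$ the martingale-difference property kills the cross terms, so $\frac1n\Expect[(\sum_k\Delta_k)(\sum_k\Delta_k)^\transpose]=\frac1n\sum_k\Expect[\Delta_k\Delta_k^\transpose]$; I bound the state-dependent contribution to $\Expect[\Delta_k\Delta_k^\transpose]$ by $C\,\Expect\|\tiltheta^*_{k-1}\|^2$ (and its cross term by Cauchy--Schwarz), so each summand tends to $\Sigma^\Delta=\Expect[\Delta^*_k(\Delta^*_k)^\transpose]$ by \eqref{e:sigma_delta} and a Ces\`aro average yields $\Sigma_n\to A^{-1}\Sigma^\Delta{A^{-1}}^{\transpose}=\Sigma^*$. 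For $\Sigma_n^{22}$, differencing the scaled representation gives $n\Delta\theta^*_n=-A^{-1}\Delta_n-\tiltheta^*_{n-1}$; taking the outer-product expectation and letting $n\to\infty$, the cross term vanishes (again $\Delta^*_n$ is orthogonal to $\clF_{n-1}$) and $\tiltheta^*_{n-1}\to0$ in $L_2$, so the same limit $\Sigma^*$ results. The crux throughout the simplified-SNR part is the rigorous handling of the multiplicative noise $\tilA_k\tiltheta^*_{k-1}$: every downstream estimate rests on the $L_2$ bound $\Expect\|\tiltheta^*_k\|^2\to0$ (ideally at rate $O(1/k)$), which must be extracted from the Lyapunov/stability argument under A3 rather than assumed. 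By contrast the full-SNR representation \eqref{e:SNR_rep1} is driven only by $\Delta^*_k$, so its convergence and covariance are comparatively immediate once $\haA_n^{-1}\to A^{-1}$ is in hand.
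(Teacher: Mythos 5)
Your proposal is correct, and on part (i) it is essentially the paper's own argument: the paper derives exactly your telescoping identities, multiplying the SNR recursion by $(n+1)\haA_{n+1}$ and using $(n+1)\haA_{n+1}=n\haA_n+A_{n+1}$ to get $(n+1)\haA_{n+1}\tiltheta^\bullet_{n+1}=n\haA_n\tiltheta^\bullet_n-\Delta^*_{n+1}$ (you telescope in raw coordinates and substitute $b_k=A_k\theta^*-\Delta^*_k$ afterwards, which is algebraically the same), and likewise $(n+1)A\tiltheta^*_{n+1}=nA\tiltheta^*_n-\Delta_{n+1}$ for the simplified recursion. Where you genuinely diverge is the first limit in part (ii): the paper does not prove $\Sigma_n\to\Sigma^*$ at all, instead citing ``Taylor series arguments'' surveyed in prior work, and then derives the $\Sigma_n^{22}$ limit from it exactly as you do (via $n\Delta\theta^*_n=-A^{-1}\Delta_n-\tiltheta^*_{n-1}$, the martingale-difference orthogonality, and $\Sigma^\Delta_n=\Sigma^\Delta+o(1)$). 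Your direct route --- read $\Sigma_n=A^{-1}\bigl(\frac1n\sum_k\Expect[\Delta_k\Delta_k^\transpose]\bigr)(A^{-1})^\transpose$ off the representation, control the state-dependent part of $\Expect[\Delta_k\Delta_k^\transpose]$ by $\Expect\|\tiltheta^*_{k-1}\|^2$ and Cauchy--Schwarz, and conclude by Ces\`aro --- is more self-contained, and it buys a closed, citation-free proof at the cost of having to establish the $L_2$ bound $\Expect\|\tiltheta^*_n\|^2=O(1/n)$; you correctly flag this as the crux, and it does follow by a discrete Gronwall iteration of $n^2\,\Expect\|\tiltheta^*_n\|^2\le C\sum_{k\le n}(1+\Expect\|\tiltheta^*_{k-1}\|^2)$, which comes from the representation itself. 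You also supply the almost-sure convergence arguments (martingale SLLN plus $\haA_n\to A$), which the paper asserts without proof. One small correction: for the simplified recursion $\tiltheta^*_{n+1}=(1-\alpha_{n+1})\tiltheta^*_n+\alpha_{n+1}A^{-1}\Delta_{n+1}$, the stability needed for your Lyapunov bound is automatic from the factor $(1-\alpha_{n+1})$; the contraction condition $|1+\lambda|<1$ in A3 plays no role here (only the invertibility of $A$, from $\mathrm{Real}(\lambda)<0$, is used), so invoking it is harmless but unnecessary --- that condition is needed for PolSA/NeSA, not for SNR.
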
 

\spm{no space:
Convergence of the scaled covariance for the SNR algorithm is also possible, provided  $\{ \haA_n^{\dagger} \}$ is convergent to $A$ in  $L_2$.  
}
    
A drawback with SNR is the matrix inversion.   The PolSA algorithm  is simpler and enjoys the same attractive properties.   This is established
through coupling:

\begin{proposition}
\label{t:couple}
Suppose assumptions \textbf{(A1)}--\textbf{(A3)} hold.   Let 
$\{\theta_n^*\}$ denote the iterates using SNR \eqref{e:SNR_simple} 
and  $\{\theta_n \}$  the iterates obtained using \eqref{e:PolSA_simple},      with identical initial conditions.     
Then,
\begin{equation}
\sup_{n\ge 0} \,   n^2 \Expect[ \|  \theta_n - \theta_n^* \|^2]  <\infty   
\label{e:couple}
\end{equation} 
Consequently,  the limits \ref{e:SNR_cov_converges}  hold for the PolSA algorithm \eqref{e:PolSA_simple}:
\[
\lim_{n\to\infty}  n \Expect[\tiltheta^*_n(\tiltheta^*_n)^\transpose]
=
\lim_{n\to\infty}
 n^2 \Expect[\Delta\theta^*_n (\Delta\theta^*_n )^\transpose] = \Sigma^* 
\]
\end{proposition}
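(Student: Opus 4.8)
The plan is to prove the coupling bound \eqref{e:couple} first; the covariance identities then follow from \Prop{t:snr}. Throughout I would drive \eqref{e:SNR_simple} and \eqref{e:PolSA_simple} with the \emph{same} realization of $\{(A_n,b_n)\}$ and identical initial conditions, and set $e_n\eqdef\theta_n-\theta_n^*=\tiltheta_n-\tiltheta_n^*$ for the coupling error (so that $e_0=e_{-1}=0$). The argument rests on three ingredients: (a) an exact telescoped representation of the PolSA iterate, (b) an $L_2$ stability bound for \eqref{e:PolSA_simple}, and (c) a self-bounding estimate for $e_n$.

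First I would derive the representation. Isolating $\alpha_{n+1}f_{n+1}(\theta_n)$ in \eqref{e:PolSA_simple} and multiplying by $-A^{-1}$ gives the identity $-\alpha_{n+1}A^{-1}f_{n+1}(\theta_n)=(I+A^{-1})\Delta\theta_n-A^{-1}\Delta\theta_{n+1}$; summing over $n$ and telescoping the right-hand side yields
\[
\tiltheta_n = \tiltheta_0 - A^{-1}\sum_{k=1}^{n}\alpha_k f_k(\theta_{k-1}) + (I+A^{-1})\Delta\theta_n .
\]
The simplified SNR recursion \eqref{e:SNR_simple} telescopes directly to $\tiltheta_n^*=\tiltheta_0-A^{-1}\sum_{k=1}^{n}\alpha_k f_k(\theta_{k-1}^*)$. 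Subtracting, and using $f_k(\theta_{k-1})-f_k(\theta_{k-1}^*)=A_k e_{k-1}$ (valid in the linear model \eqref{e:fLin}), I obtain the error relation
\[
e_n = -A^{-1}\sum_{k=1}^{n}\alpha_k A_k\, e_{k-1} + (I+A^{-1})\Delta\theta_n .
\]
This exhibits $e_n$ as a contractive $1/n$-gain accumulation forced only by the boundary term $(I+A^{-1})\Delta\theta_n$, so the coupling rate is governed by the decay of the PolSA increments.

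The hard part is the stability estimate, which is not supplied by \Prop{t:snr} and must be proved for PolSA directly. Writing the second-order recursion \eqref{e:PolSA_simple} in augmented form with state $\Phi_n\eqdef(\tiltheta_n,\tiltheta_{n-1})$ gives $\Phi_{n+1}=(\clT+\alpha_{n+1}E)\Phi_n+\alpha_{n+1}(\Noise_{n+1},0)$, where $\clT$ has spectrum $\{1\}\cup\{\,1+\lambda:\lambda\in\text{spec}(A)\,\}$ and $E$ carries the $A$-block. The obstacle is that $\clT$ is only marginally stable: the eigenvalue $1$ corresponds to the averaging direction $\tiltheta_n\approx\tiltheta_{n-1}$. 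The perturbation $\alpha_{n+1}E$ restricted to this mode inherits the Hurwitz property $\text{Real}(\lambda)<0$ from \textbf{(A3)}, so this mode behaves as a standard scalar-gain SA recursion, while the condition $|1+\lambda|<1$ makes the complementary modes geometrically contracting. A Lyapunov/Chung-type bound on $\Expect\|\Phi_n\|^2$, together with the martingale-difference orthogonality of \textbf{(A2)}, should then give $\Expect\|\tiltheta_n\|^2=O(1/n)$ and $\Expect\|\Delta\theta_n\|^2=O(1/n^2)$. I expect this analysis of the marginally stable momentum recursion to be the principal technical difficulty.

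Finally I would combine the pieces. Inserting $\Expect\|\Delta\theta_n\|^2=O(1/n^2)$ into the error relation, the forcing $(I+A^{-1})\Delta\theta_n$ is $O(1/n)$ in $L_2$, and since the accumulation operator has $1/n$ gain with a stable conditional mean ($\Expect[A^{-1}A_k e_{k-1}\mid\clF_{k-1}]=e_{k-1}$), a discrete Gronwall/self-bounding argument — again exploiting \textbf{(A2)} to prevent a logarithmic loss in the martingale sum — yields $\Expect\|e_n\|^2=O(1/n^2)$, which is \eqref{e:couple}. For the covariance, write $\tiltheta_n=\tiltheta_n^*+e_n$; since $\Expect\|e_n\|^2=O(n^{-2})$ is negligible against $\Expect\|\tiltheta_n^*\|^2=O(n^{-1})$, scaling by $n$ and bounding the cross term by Cauchy--Schwarz with \Prop{t:snr} gives $\lim_{n\to\infty} n\,\Expect[\tiltheta_n\tiltheta_n^\transpose]=\Sigma^*$, the optimal covariance.
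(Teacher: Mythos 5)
Your telescoped representation is correct and is in fact the mirror image of the paper's first step: you sum \eqref{e:PolSA_simple} against $-A^{-1}$ to make PolSA look like SNR plus the boundary term $(I+A^{-1})\Delta\theta_n$, whereas the paper (Lemma~\ref{t:couplePolSA}) rewrites SNR \eqref{e:SNR_simple} to look like PolSA plus an error term $\clE_{n+1}=[I+A](\Delta\theta^*_{n+1}-\Delta\theta^*_n)$. This difference of direction is not cosmetic: in the paper's version every forcing term in the resulting recursion for $\bartheta_n=\theta_n-\theta_n^*$ (equivalently $\xi_n=n\bartheta_n$) is an \emph{SNR} quantity, already controlled by \Prop{t:snr} ($n\clE_n$ bounded with bounded partial sums, since $(n+1)(\Delta\theta^*_{n+1}-\Delta\theta^*_n)=-2\Delta\theta^*_n-A^{-1}(\Noise_{n+1}-\Noise_n)$ telescopes), and the only remaining difficulty, the multiplicative martingale term $\tilA_{n+1}\bartheta_n$, is absorbed by a weighted-norm induction exploiting $\sum_k k^{-2}<\infty$. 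In your version the forcing term is the \emph{PolSA} quantity $\Delta\theta_n$, which is not controlled by anything proved so far, and this is where your argument has a genuine gap.

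Concretely, your ingredient (b) — the standalone bound $\Expect\|\Delta\theta_n\|^2=O(1/n^2)$ for \eqref{e:PolSA_simple} — does not follow from the sketch you give. The obstruction is that the slow mode $\sqrt{n}\,\tiltheta_n$ and the fast mode $n\,\Delta\theta_n$ are coupled at order $n^{-1/2}$ (see the matrix $M$ in \eqref{e:MBDef}), and $n^{-1/2}$ is exactly the marginal order: its square equals the step size $\alpha_n$, so the cross term feeds an $O(1)$ input into the slow mode's variance recursion and the modes cannot be decoupled by a naive perturbation or diagonal-Lyapunov argument. Indeed the paper's Lemma~\ref{t:OffDiagSigma} shows the scaled cross-covariance $\psi_n=\sqrt{n}\Sigma_n^{21}$ converges to a \emph{nonzero} limit $-\Sigma_n^{11}-A^{-1}\Sigma_\infty^{22}$, and the limiting $\Sigma^{11}_\infty$ depends on $\Sigma^{22}_\infty$ through it; so any correct proof of (b) must run the coupled recursions for $\Sigma^{11}_n,\Sigma^{12}_n,\Sigma^{22}_n$ jointly. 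But in the paper those recursions are only validated under boundedness of $\{\Sigma_n\}$, which is \Prop{t:polsa_bdd_var} — itself a corollary of the coupling result you are trying to prove. So as organized, your plan either imports a fact that is downstream of \Prop{t:couple} (circularity) or requires a substantial new a priori stability analysis of the marginally coupled two-block system that the sketch ("Lyapunov/Chung-type bound \ldots should then give") does not supply. Your ingredients (a) and (c), and the final transfer of the covariance limits via Cauchy--Schwarz, are sound conditionally on (b); the fix closest to your outline would be to swap the roles of the two algorithms as the paper does, so that the boundary/forcing terms are SNR-derived and (b) is never needed.
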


Other than  SNR and the Polyak-Ruppert averaging technique, to the best of our knowledge, PolSA is the only other known   algorithm that achieves optimal asymptotic variance.

The proof of \Prop{t:couple}, contained in Section~\ref{s:couple} of the supplementary material, is based on a justification of the heuristic used   to construct the deterministic recursion \eqref{e:OurLinearHBdet}.  


An illustration is provided in \Fig{f:200} for the linear model $f_n(\theta) = A\theta + \Noise_n$ in which $-A$ is symmetric and positive definite,  with $\lambdamax(-A)=1$, and $\{\Noise_n\}$ is i.i.d.\ and Gaussian.  Shown are the trajectories of $\{\theta_n(1) : n\le 10^5\}$ (note that $\Sigma^*(1,1)$ is over one million).


\begin{figure*}[t]
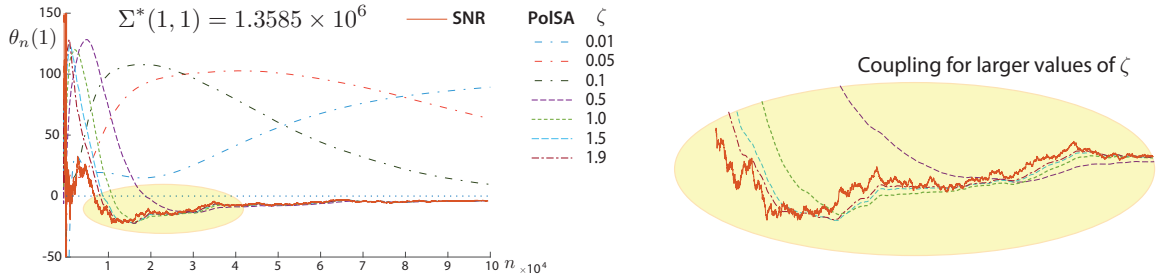

	\Ebox{1}{200-AISTATS.pdf} 
\vspace{-1.25em}
\caption{Coupling between PolSA and SNR   occurs quickly for $0.5\le \Ascale\le 1.9$.} 
\vspace{-.5em}
\label{f:200}
\end{figure*}

\subsection{Applications}
\label{s:app}

\subparagraph{Reinforcement learning} 

\Section{s:num} describes application to Q-learning, and includes numerical examples.  
\Section{s:TD_appendix} in the supplementary material contains a full account of TD-learning.  In particular, the LSTD  algorithm can be regarded as an instance of SNR:  \eqref{e:SAa} with $G_n=-\haA_n^{-1}$ an estimate of $G^* = -A^{-1}$.     

\subparagraph{Stochastic optimization} 


A common application of stochastic approximation is convex optimization.    In this setting,  $\barf(\theta) = \nabla \Expect[ J_n(\theta)]$ for a sequence of smooth functions $\{J_n\}$,  and then $f_n= - \nabla J_n$.   
The theory developed in this paper is applicable,  except in degenerate cases.   For comparison, consider the quadratic optimization problem in which
$f_n(\theta)=A\theta -b +\Delta_n$,  with $-A>0$.  The stability condition \eqref{e:Aconds} holds provided   $\Ascale < 1/\lambdamax(-A)$: a condition familiar in the convex optimization literature.

\ad{Does not belong in applications!!
\bf
why not?   }

\subparagraph{Stochastic algorithms for deterministic optimization} 

Finally, we explain why the results of this paper \textit{do not} apply in typical deterministic optimization domains.

It is common to use randomized algorithms to solve deterministic optimization problems. Two examples are ERM and the randomized coordinate descent.   In these and other examples, the algorithms are designed so that randomness vanishes as $\theta$ approaches $\theta^*$.  The asymptotic covariance matrix $\Sigma^\Delta$ is zero, and hence   the asymptotic covariance  \eqref{e:SigmaStar} also vanishes.

\section{Variance analysis of the NeSA algorithm}
\label{s:NeSA}


The NeSA algorithm \eqref{e:NeSA} has a finite asymptotic covariance that can be expressed as the solution to a Lyapunov equation.    We again restrict to the linear model, so that the recursion \eqref{e:NeSA} 
(with $\Ascale = 1$)
becomes 
\begin{equation}
\begin{aligned}
\hspace{-0.085in}\Delta\theta_{n+1} = [I + A_{n+1}]\Delta\theta_n + \alpha_{n+1} [A_{n+1}\theta_n -b_{n+1}]
\end{aligned}
\label{e:accSAn_Zap}
\end{equation}

Stability of the recursion requires a strengthening of assumption \eqref{e:Aconds}.     
Define the linear operator $\clL \colon \Re^{d\times d} \to \Re^{d\times d}$ as follows: For any matrix $Q \in \Re^{d\times d}$,
\begin{equation}
\clL(Q) \eqdef \Expect [(I+A_n) Q (I+A_n)^{\transpose}]
\label{e:clL}
\end{equation}

Define the $2d$-dimensional vector processes  $\Phi_n \eqdef ( \sqrt{n} \tiltheta_n , n \Delta \theta_n)^\transpose$, and
\begin{equation}
\Sigma_n \eqdef \Expect  [  \Phi_n \Phi_n^\transpose]
=
\begin{bmatrix}
\Sigma_n^{11} & \Sigma_n^{12}
\\
\Sigma_n^{21} & \Sigma_n^{22}
\end{bmatrix}
\label{e:SigmanAnBlockForm}
\end{equation} 
The following assumptions are imposed throughout: \spm{where $\clF$ is the natural filtration \eqref{e:clF}.
}
\begin{romannum}
\item[\textbf{(N1)}]  $\{\tilA_n, \tilb_n\}$ are bounded martingale difference sequences.  Moreover, for any 
matrix $Q$,
\[
  \Expect [(I+A_n) Q (I+A_n)^{\transpose}\mid \clF_{n-1}] = \clL(Q) 
\]


\item[\textbf{(N2)}]     The bounds in \eqref{e:Aconds} hold, and  the spectral radius of $\clL$ is strictly bounded by unity.

\item[(N3)] The covariance sequence $\{\Sigma_n\}$ defined in \eqref{e:SigmanAnBlockForm}  is bounded.
\\
In \Section{s:VarNeSA} of the supplementary material we discuss how (N3) can be relaxed.

\end{romannum}



\begin{proposition}
\label{t:OptVarZapHBAn}
Suppose that (N1) and (N2) hold.  
Then, 
\begin{equation}
\lim_{n\to\infty}  \Sigma_n=\begin{bmatrix}
\Sigma_\infty^{11} & 0
\\
0 & \Sigma_\infty^{22}
\end{bmatrix}
\label{e:SigLimAn}
\end{equation}
in which  the second limit is the  solution to the Lyapunov equation
\begin{equation} 
\Sigma_\infty^{22} = \clL(\Sigma_\infty^{22}) + \Sigma^{\Noise}
\label{e:SigLim22An}
\end{equation}
(an explicit solution is given in eqn.~\eqref{e:SigLim22An_inverse} 
of the supplementary material),
and
\begin{equation}
\Sigma_\infty^{11}= - \Sigma_\infty^{22} - A^{-1} \Sigma_\infty^{22} - \Sigma_\infty^{22} A^{-1}
\label{e:LimSigma11An}
\end{equation}
\end{proposition}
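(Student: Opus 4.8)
The plan is to convert the linear NeSA recursion \eqref{e:accSAn_Zap} into a self-contained affine recursion for the block covariance $\Sigma_n$ and then read off its limit. Throughout I take the gain $\alpha_{n+1}=1/(n+1)$ implicit in the $\sqrt n$, $n$ scalings defining $\Phi_n$, and write $c_n\eqdef (n+1)/n$. Denoting the two components of $\Phi_n$ by $\phi_n^{(1)}=\sqrt n\,\tiltheta_n$ and $\phi_n^{(2)}=n\Delta\theta_n$, and using $f_{n+1}(\theta_n)=A\tiltheta_n+\Noise_{n+1}$ with $\tiltheta_n=\phi_n^{(1)}/\sqrt n$, the recursion becomes
\begin{equation}
\phi_{n+1}^{(2)} = c_n(I+A_{n+1})\phi_n^{(2)} + \tfrac{1}{\sqrt n}A\phi_n^{(1)} + \Noise_{n+1},
\qquad
\phi_{n+1}^{(1)} = \sqrt{c_n}\,\phi_n^{(1)} + \tfrac{1}{\sqrt{n+1}}\phi_{n+1}^{(2)}.
\label{e:plan-scaled}
\end{equation}
First I would split the first relation as $\phi_{n+1}^{(2)}=D_n+W_{n+1}$ with $\clF_n$-measurable drift $D_n=c_n(I+A)\phi_n^{(2)}+\tfrac1{\sqrt n}A\phi_n^{(1)}$ and martingale difference $W_{n+1}=c_n\tilA_{n+1}\phi_n^{(2)}+\Noise_{n+1}$, using $\Expect[\tilA_{n+1}\mid\clF_n]=0$ and $\Expect[\Noise_{n+1}\mid\clF_n]=0$. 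Since $W_{n+1}$ is a martingale difference and $D_n$ is $\clF_n$-measurable, every drift--noise cross term drops under $\Expect$, closing a deterministic recursion on the blocks.

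Next I would establish the Lyapunov limit for the $(2,2)$ block, where the crux is a cancellation. Expanding $\Sigma_{n+1}^{22}=\Expect[D_nD_n^\transpose]+\Expect[W_{n+1}W_{n+1}^\transpose]$, the drift contributes $c_n^2(I+A)\Sigma_n^{22}(I+A)^\transpose+O(n^{-1/2})$, while the martingale part contributes the gain-noise variance $c_n^2\Expect[\tilA_{n+1}\phi_n^{(2)}(\phi_n^{(2)})^\transpose\tilA_{n+1}^\transpose]$ together with $\Expect[\Noise_{n+1}\Noise_{n+1}^\transpose]=\Sigma^{\Noise}+o(1)$. The identity $\Expect[\tilA_{n+1}Q\tilA_{n+1}^\transpose\mid\clF_n]=\clL(Q)-(I+A)Q(I+A)^\transpose$, a direct consequence of (N1), makes the two $(I+A)(\cdot)(I+A)^\transpose$ terms cancel, leaving
\begin{equation}
\Sigma_{n+1}^{22}=c_n^2\,\clL(\Sigma_n^{22})+\Sigma^{\Noise}+o(1),
\qquad c_n^2=1+O(1/n).
\label{e:plan-22}
\end{equation}
Since $\rho(\clL)<1$ by (N2) and $\{\Sigma_n\}$ is bounded by (N3), this affine recursion is an eventual contraction, so $\Sigma_n^{22}$ converges to the unique fixed point $\Sigma_\infty^{22}=(\clI-\clL)^{-1}(\Sigma^{\Noise})$ of \eqref{e:SigLim22An}.

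For the off-diagonal block, the second relation in \eqref{e:plan-scaled} with $\Expect[\phi_n^{(1)}(\phi_{n+1}^{(2)})^\transpose]=c_n\Sigma_n^{12}(I+A)^\transpose+\tfrac1{\sqrt n}\Sigma_n^{11}A^\transpose$ yields $\Sigma_{n+1}^{12}=\Sigma_n^{12}(I+A)^\transpose+O(n^{-1/2})$; because $\rho(I+A)<1$ by \eqref{e:Aconds}, contraction with vanishing forcing drives $\Sigma_n^{12}\to 0$, giving \eqref{e:SigLimAn}, and more precisely $\Sigma_n^{12}=\tfrac1{\sqrt n}Y+o(n^{-1/2})$ where $Y-Y(I+A)^\transpose=\Sigma_\infty^{11}A^\transpose+\Sigma_\infty^{22}$, i.e.\ $Y=-\Sigma_\infty^{11}-\Sigma_\infty^{22}A^{-\transpose}$. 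Finally, the $(1,1)$ block obeys a marginally stable recursion $\Sigma_{n+1}^{11}=\Sigma_n^{11}+\tfrac1n B_n+o(n^{-1})$ whose coefficient tends to $1$; finiteness of the limit therefore forces the no-drift condition $\lim_n B_n=0$. Substituting the leading coefficient $Y$ above and $\Sigma_\infty^{22}$ into $B_\infty=0$ collapses, after elementary algebra, to the stated identity $\Sigma_\infty^{11}=-\Sigma_\infty^{22}-A^{-1}\Sigma_\infty^{22}-\Sigma_\infty^{22}A^{-1}$.

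The main obstacle is the $(1,1)$ block: unlike the $(2,2)$ and off-diagonal blocks it is only marginally stable, so its limit is pinned not by a contraction but by the absence of secular $\log n$ growth, and evaluating that condition correctly needs the \emph{precise} $O(n^{-1/2})$ asymptotics of $\Sigma_n^{12}$ rather than merely $\Sigma_n^{12}\to0$. Controlling these lower-order terms uniformly, and verifying that the remainders in \eqref{e:plan-22} and its analogues are genuinely $o(n^{-1})$, is the delicate part and is where (N3) is used. A secondary point is the surviving multiplicative/additive cross-correlation $\Expect[\tilA_{n+1}\phi_n^{(2)}\Noise_{n+1}^\transpose]$ in \eqref{e:plan-22}: conditioning on $\clF_n$ shows it is a fixed linear image of $\Expect[\phi_n^{(2)}]=n\Expect[\Delta\theta_n]$, which vanishes once one argues via the mean (noise-free) recursion that the mean error decays faster than $n^{-1}$.
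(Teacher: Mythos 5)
Your plan retraces the paper's own proof in all essentials: the same scaled state $\Phi_n=(\sqrt n\,\tiltheta_n,\,n\,\Delta\theta_n)$ and closed block-covariance recursions (the paper's eqns.~\eqref{e:PhinAn} and \eqref{e:MatrixRecursionsAn}); the same use of (N1) through the identity $\clL(Q)=(I+A)Q(I+A)^\transpose+\Expect[\tilA_{n}Q\tilA_{n}^\transpose]$ to close the $(2,2)$ block as a $\clL$-Lyapunov recursion; the same disposal of the multiplicative--additive cross-correlation by reducing it to a fixed linear image of $\Expect[\Phi_n]$ and showing $\Expect[\Phi_n]\to0$ via the noise-free mean recursion (this is precisely the content of the paper's Lemmas~\ref{t:Phin} and~\ref{t:SigmaWn}); and the same $\psi_n=\sqrt n\,\Sigma_n^{21}$ asymptotics obtained by viewing the off-diagonal recursion as a geometrically stable linear system driven by the slowly varying input $A\Sigma_n^{11}+\Sigma_\infty^{22}$ (Lemma~\ref{t:OffDiagSigmaAn}). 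Your exact rescaling with $c_n=(n+1)/n$ is in fact slightly cleaner than the paper's Taylor expansion of $\sqrt{n+1}$, and like the paper you need (N3) even though the proposition's statement lists only (N1)--(N2).

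The one substantive weak point is your final step for the $(1,1)$ block. You argue that ``finiteness of the limit forces the no-drift condition $\lim_n B_n=0$,'' but existence of $\lim_n\Sigma_n^{11}$ is part of what \eqref{e:SigLimAn} asserts, so it cannot be presupposed; and (N3) supplies only boundedness, which does not by itself force $B_n\to0$ (a bounded oscillating $\Sigma_n^{11}$ produces an oscillating $B_n$ whose weighted partial sums stay bounded). The repair is one line and is already implicit in your own computation: substituting $\psi_n=-\Sigma_n^{11}-A^{-1}\Sigma_\infty^{22}+o(1)$ gives $B_n=-\Sigma_n^{11}+C+o(1)$ with $C=-\Sigma_\infty^{22}-A^{-1}\Sigma_\infty^{22}-\Sigma_\infty^{22}A^{-1}$, so the $(1,1)$ recursion reads $\Sigma_{n+1}^{11}-C=(1-\alpha_{n+1})(\Sigma_n^{11}-C)+o(\alpha_{n+1})$, an averaged contraction; since $\prod_{k\le n}(1-\alpha_k)\to0$, convergence $\Sigma_n^{11}\to C$ follows with no appeal to a hypothetical limit. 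The paper reaches the same conclusion by regarding this recursion as a Euler scheme for the stable ODE $\ddt x_t=-x_t+C$ and invoking stochastic approximation theory; either route is fine, but the drift must be exhibited as contractive rather than merely set to zero at an assumed limit.
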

\spm{AD (writing to you Thurs eve):  I am super suspicious of $\Sigma_\infty^{11}$!   At some point we need to prove that this is larger than $\Sigma^*$.
}

The following result is a corollary to \Prop{t:OptVarZapHBAn},
with an independent proof provided in \Section{s:VarAnaZapHB} of the supplementary material.

\begin{proposition}
\label{t:OptVarZapHB}
Under (N1)--(N3) the conclusions of \Prop{t:OptVarZapHBAn}
hold for the PolSA recursion \eqref{e:PolSA_simple}.  In this case the solution 
to the Lyapunov equation is the optimal covariance: 
\begin{equation}
\Sigma_\infty^{11}= \Sigma^* \eqdef A^{-1}  \Sigma_\Delta {A^{-1} }^\transpose
\label{e:LimSigma11}
\end{equation}
and $\Sigma_\infty^{22}\ge 0$ is the unique   solution to the Lyapunov equation
\begin{equation} 
\Sigma_\infty^{22} = (I+A) \Sigma_\infty^{22} (I+A)^\transpose + \Sigma^{\Noise}
\label{e:SigLim22}
\end{equation} 
\end{proposition}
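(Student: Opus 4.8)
The cleanest route is to read the PolSA recursion \eqref{e:PolSA_simple} as the special case of \eqref{e:accSAn_Zap} in which the random momentum factor $I+A_{n+1}$ is replaced by its deterministic mean $I+A$, so that only the forcing term $\alpha_{n+1}f_{n+1}(\theta_n)$ carries noise. Consequently the linear operator governing the second-moment recursion, namely $\clL$ of \eqref{e:clL}, collapses to the deterministic map $\clL(Q)=(I+A)Q(I+A)^{\transpose}$, and assumption (N1) holds automatically with this $\clL$. Its eigenvalues are the products $(1+\lambda_i)(1+\lambda_j)$ ranging over eigenvalues $\lambda_i,\lambda_j$ of $A$, so its spectral radius equals $\rho(I+A)^2$; this is strictly below unity precisely when $|1+\lambda|<1$ for every eigenvalue $\lambda$ of $A$, which is the second bound in \eqref{e:Aconds}. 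Thus (N2) reduces to the stability hypothesis already in force, and \Prop{t:OptVarZapHBAn} applies verbatim: $\Sigma_n$ converges to a block-diagonal limit and \eqref{e:SigLim22An} becomes exactly \eqref{e:SigLim22}. Uniqueness and $\Sigma_\infty^{22}\ge 0$ follow because $(I-\clL)$ is invertible and $\clL$ maps the positive semidefinite cone into itself, so $\Sigma_\infty^{22}=\sum_{k\ge 0}\clL^k(\Sigma^{\Noise})\ge 0$.

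It remains to identify $\Sigma_\infty^{11}$ with $\Sigma^*$. I would start from the general formula \eqref{e:LimSigma11An} and substitute the specialized Lyapunov equation \eqref{e:SigLim22}. Writing $S=\Sigma_\infty^{22}$ and expanding $(I+A)S(I+A)^{\transpose}=S+AS+SA^{\transpose}+ASA^{\transpose}$, the equation gives $\Sigma^{\Noise}=-AS-SA^{\transpose}-ASA^{\transpose}$. Pre- and post-multiplying by $A^{-1}$ and ${A^{-1}}^{\transpose}$ then yields $A^{-1}\Sigma^{\Noise}{A^{-1}}^{\transpose}=-S-A^{-1}S-S{A^{-1}}^{\transpose}$, which is precisely the right-hand side of \eqref{e:LimSigma11An} (with the last factor read as ${A^{-1}}^{\transpose}$). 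Hence $\Sigma_\infty^{11}=A^{-1}\Sigma^{\Noise}{A^{-1}}^{\transpose}=\Sigma^*$, establishing \eqref{e:LimSigma11}.

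For the advertised independent proof I would instead track the coupled recursions for the blocks $\Sigma_n^{11},\Sigma_n^{12},\Sigma_n^{22}$ directly. Setting $w_n=n\Delta\theta_n$ and using $\alpha_{n+1}=1/(n+1)$, the PolSA recursion reads $w_{n+1}=\tfrac{n+1}{n}(I+A)w_n+A\tiltheta_n+\Noise_{n+1}$; here $A\tiltheta_n$ is $O(n^{-1/2})$ in $L_2$ and $\Noise_{n+1}=\Noise_{n+1}^*+o(1)$ by (N3) and $L_2$-convergence, so the covariance of $w_n$ obeys a perturbed stable recursion whose limit solves \eqref{e:SigLim22}. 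The block $\Sigma_n^{11}$ is then handled through the summation $\tiltheta_n=\sum_{k\le n}\Delta\theta_k$, or more directly through the coupling bound \eqref{e:couple} of \Prop{t:couple}, which forces $\sqrt{n}\,\tiltheta_n$ to share the limiting covariance of $\sqrt{n}\,\tiltheta_n^*$ and hence gives $\Sigma_\infty^{11}=\Sigma^*$.

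The main obstacle is the off-diagonal block: showing $\Sigma_n^{12}=n^{3/2}\Expect[\tiltheta_n\Delta\theta_n^{\transpose}]\to 0$. Since $\tiltheta_n=O(n^{-1/2})$ and $\Delta\theta_n=O(n^{-1})$ in $L_2$, this cross term is generically $O(1)$, so its vanishing is a genuine cancellation that must be extracted from the coupled recursion rather than from crude bounds; it reflects that the averaged error $\tiltheta_n$ decorrelates from the ``local'' increment $w_n$. Controlling the time-varying factor $\tfrac{n+1}{n}$ uniformly, so as to relax (N3) to a self-contained boundedness argument as promised in \Section{s:VarNeSA}, and propagating the $o(1)$ noise perturbation through the Lyapunov fixed point, are the remaining and largely routine technical points.
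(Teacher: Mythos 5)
Your proposal is correct and, in substance, it is the paper's own route: the paper itself presents the result as a corollary of \Prop{t:OptVarZapHBAn} (with $\clL(Q)=(I+A)\,Q\,(I+A)^{\transpose}$ deterministic, so that the conditional-expectation requirement in (N1) is vacuous and (N2) is exactly the second bound in \eqref{e:Aconds}), and its independent proof in \Section{s:VarAnaZapHB} is precisely your block-recursion program: boundedness (N3) is established for PolSA via the coupling result (\Prop{t:polsa_bdd_var}, a corollary of \Prop{t:snr} and \Prop{t:couple}); $\{\Sigma_n^{22}\}$ obeys a geometrically stable Lyapunov recursion giving \eqref{e:SigLim22}; and the cross term is handled through $\psi_n=\sqrt{n}\,\Sigma_n^{21}$, whose \emph{nonzero} limit $-\Sigma_n^{11}-A^{-1}\Sigma_\infty^{22}$ (\Lemma{t:OffDiagSigma}) is exactly the cancellation you anticipate, yielding $\Sigma_n^{21}=O(n^{-1/2})\to 0$ where, as you observe, crude Cauchy--Schwarz bounds give only $O(1)$. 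Two of your touches go slightly beyond the text and are worth noting. First, the explicit algebra showing that \eqref{e:LimSigma11An} combined with \eqref{e:SigLim22} collapses to $\Sigma^*$ is compressed in the paper into the phrase ``and simplifying gives'' \eqref{e:Sigma11SArecursion}; your reading of the last factor in \eqref{e:LimSigma11An} as ${A^{-1}}^{\transpose}$ is the right call, since otherwise the right-hand side fails to be symmetric. Second, your shortcut deriving $\Sigma_\infty^{11}=\Sigma^*$ directly from the coupling bound \eqref{e:couple} is legitimate — coupling at rate $n^{-1}$ in $L_2$ forces $\sqrt{n}(\tiltheta_n-\tiltheta_n^*)\to 0$, so the $\Sigma^{11}$ limits of PolSA and SNR coincide — and it is simpler than the paper's ODE/Euler argument on the $\Sigma^{11}$ recursion; but be aware that \eqref{e:couple} transfers \emph{only} the $\Sigma^{11}$ limit: $n^2\Expect[\Delta\theta_n\Delta\theta_n^{\transpose}]$ is not pinned down by the coupling (the differences $n\Delta\theta_n - n\Delta\theta_n^*$ are merely $O(1)$ in $L_2$), which is why $\Sigma_\infty^{22}$ must still be extracted from the Lyapunov recursion, exactly as you and the paper do.
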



The proofs of the following   are contained in Section~\ref{s:VarNeSA} of the supplementary material.

\begin{lemma}
\label{t:OffDiagSigmaAn}
The following approximations hold,  with    $\psi_n \eqdef \sqrt{n}  \Sigma_n^{21}$:
\begin{equation} 
\begin{aligned}
\Sigma_{n+1}^{22}   &=   \clL(\Sigma_n^{22}) + \Sigma^{\Noise}
+ o(1)
\,,
\qquad
\psi_n    = -\Sigma_n^{11}  - A^{-1}  \Sigma_\infty^{22}    + o(1)\,,\qquad n\ge 1\, .
\end{aligned}
\label{e:OffDiagSigmaAn}
\end{equation}
\end{lemma}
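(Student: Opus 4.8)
The plan is to rescale the recursion \eqref{e:accSAn_Zap} into a one-step covariance recursion for the block process $\Phi_n$, and then to extract a contraction governed by $I+A$. Write $X_n \eqdef \sqrt{n}\,\tiltheta_n$ and $Y_n \eqdef n\Delta\theta_n$, so that $\Sigma_n^{11}=\Expect[X_nX_n^\transpose]$, $\Sigma_n^{22}=\Expect[Y_nY_n^\transpose]$, and $\Sigma_n^{21}=\Expect[Y_nX_n^\transpose]$. With the step-size $\alpha_n=1/n$ (so $(n+1)\alpha_{n+1}=1$), \eqref{e:accSAn_Zap} rescales to the exact identities
\[
Y_{n+1} = \frac{n+1}{n}(I+A_{n+1})Y_n + W_{n+1},
\qquad
X_{n+1} = \sqrt{\frac{n+1}{n}}\,X_n + \frac{1}{\sqrt{n+1}}\,Y_{n+1},
\]
where $W_{n+1}\eqdef f_{n+1}(\theta_n)=A\tiltheta_n+\Noise_{n+1}$ and $\Noise_{n+1}=\tilA_{n+1}\tiltheta_n+\Noise_{n+1}^*$.

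For the first approximation I would expand $\Sigma_{n+1}^{22}=\Expect[Y_{n+1}Y_{n+1}^\transpose]$ using the first identity and condition on $\clF_n$. By (N1) the leading quadratic term equals $(1+1/n)^2\clL(\Sigma_n^{22})=\clL(\Sigma_n^{22})+O(1/n)$, and $\Expect[W_{n+1}W_{n+1}^\transpose]=\Sigma^{\Noise}+o(1)$, since the martingale-difference contributions decouple in conditional expectation while $\Expect[\tiltheta_n\tiltheta_n^\transpose]=\Sigma_n^{11}/n=o(1)$. The two cross terms are also $o(1)$: each is controlled either by $\Expect[Y_n\tiltheta_n^\transpose]=n^{-1/2}\Sigma_n^{21}=O(n^{-1/2})$ or by $\Expect[Y_n]$. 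The one nonroutine point is to show $\Expect[Y_n]\to 0$; for this I would take means in the $Y$-recursion, obtaining $\Expect[Y_{n+1}]=(1+1/n)(I+A)\Expect[Y_n]+A\Expect[\tiltheta_n]$, and combine the contraction $I+A$ with the bound $\|\Expect[\tiltheta_n]\|=O(n^{-1/2})$ (inherited from boundedness of $\Sigma_n^{11}$) to get $\Expect[Y_n]=O(n^{-1/2})$. Collecting terms yields $\Sigma_{n+1}^{22}=\clL(\Sigma_n^{22})+\Sigma^{\Noise}+o(1)$; because the spectral radius of $\clL$ is below unity (N2), this affine recursion converges and $\Sigma_n^{22}\to\Sigma_\infty^{22}$, the solution of \eqref{e:SigLim22An}.

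The second approximation is cleaner because the noise enters $W_{n+1}$ linearly. Using that $X_n$ is $\clF_n$-measurable and that $\tilA_{n+1},\Noise_{n+1}^*$ are mean-zero given $\clF_n$, I would derive the exact identity $\Expect[Y_{n+1}X_n^\transpose]=(1+1/n)(I+A)\Sigma_n^{21}+n^{-1/2}A\Sigma_n^{11}$, then assemble $\Sigma_{n+1}^{21}$ from the second rescaled identity. Multiplying by $\sqrt{n+1}$ and substituting $\Sigma_n^{21}=\psi_n/\sqrt{n}$ and $\Sigma_{n+1}^{22}=\Sigma_\infty^{22}+o(1)$ gives
\[
\psi_{n+1} = (I+A)\psi_n + A\Sigma_n^{11} + \Sigma_\infty^{22} + o(1).
\]
A cruder bound on the $\Sigma^{11}$-recursion shows its increments vanish, $\Sigma_{n+1}^{11}-\Sigma_n^{11}=O(1/n)=o(1)$, which is all that is needed.

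To conclude I would set $\xi_n\eqdef\psi_n+\Sigma_n^{11}+A^{-1}\Sigma_\infty^{22}$. Substituting the two displays above and using $\psi_n+\Sigma_n^{11}=\xi_n-A^{-1}\Sigma_\infty^{22}$, the constant forcing telescopes exactly to zero, leaving $\xi_{n+1}=(I+A)\xi_n+o(1)$. Since \eqref{e:Aconds} (with $\Ascale=1$) gives $|1+\lambda|<1$ for every eigenvalue $\lambda$ of $A$, the matrix $I+A$ has spectral radius strictly below one, and $\{\xi_n\}$ is bounded by (N3); the standard $\limsup$ argument for stable linear recursions with vanishing forcing then gives $\xi_n\to0$, which is exactly $\psi_n=-\Sigma_n^{11}-A^{-1}\Sigma_\infty^{22}+o(1)$. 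I expect the main obstacle to be the bookkeeping of the cross term in the $\Sigma^{22}$-recursion coupling $\tilA_{n+1}$ and $\Noise_{n+1}^*$ through $Y_n$, and in particular the auxiliary estimate $\Expect[Y_n]=n\Expect[\Delta\theta_n]\to0$ that renders it $o(1)$.
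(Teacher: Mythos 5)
Your proposal is correct, and its skeleton is the same as the paper's: rescale to the block process $(\sqrt{n}\,\tiltheta_n,\,n\Delta\theta_n)$, derive one-step covariance recursions, kill the correlated cross term $\Expect[(I+A_{n+1})\,n\Delta\theta_n\,\Noise_{n+1}^\transpose]$ by a first-moment argument, and then treat $\psi_n$ as the state of a stable linear system driven by $A\Sigma_n^{11}+\Sigma_\infty^{22}$. Where you genuinely diverge is in two local refinements, both improvements in economy. First, the paper proves $\Expect[\Phi_n]\to 0$ for the full $2d$-dimensional mean vector (its Lemma on $\Expect[\Phi_n]$, argued via a chain of approximations), and packages the cross term through the operators $\tilclL$ and $\clM$ in a separate lemma showing $\Sigma^W_n\to 0$; you only establish $\Expect[n\Delta\theta_n]=O(n^{-1/2})$ from the exact mean recursion $\Expect[Y_{n+1}]=(1+1/n)(I+A)\Expect[Y_n]+A\Expect[\tiltheta_n]$ with $\|\Expect[\tiltheta_n]\|=O(n^{-1/2})$ from (N3), which is all this lemma needs and is tighter than the paper's sketch. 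Second, for the limit of $\psi_n$ the paper uses superposition plus an explicit convolution estimate, requiring the modulus bound $\|\Sigma^{11}_{n+j}-\Sigma^{11}_n\|\le c_0\ln\bigl((n+j+1)/n\bigr)$ on the slowly varying input; your change of variables $\xi_n\eqdef\psi_n+\Sigma_n^{11}+A^{-1}\Sigma_\infty^{22}$ makes the constant forcing telescope exactly, so you only need $\Sigma^{11}_{n+1}-\Sigma^{11}_n=o(1)$ and the standard limsup argument for $\xi_{n+1}=(I+A)\xi_n+o(1)$ -- a cleaner and slightly more robust route. Your use of exact rescaled identities also avoids the paper's Taylor expansion of $\sqrt{n+1}$ and its $\epsy^\Phi_n$ bookkeeping. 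One small correction: (N3) bounds $\Sigma_n^{21}$, hence gives only $\psi_n=O(\sqrt{n})$ a priori, not boundedness of $\xi_n$; this is harmless, since the a priori $O(\sqrt{n})$ bound suffices to validate the $o(1)$ error terms in your $\psi$-recursion, after which boundedness of $\psi_n$ (hence of $\xi_n$) follows from that recursion itself, $I+A$ having spectral radius below one with bounded forcing -- the same implicit step the paper takes when it declares $\psi_n$ bounded.
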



The second iteration is used together with the following result to obtain   \eqref{e:LimSigma11An}.

\begin{lemma}
\label{t:SAforCovarAn}
The following approximation holds:
\begin{equation}
\Sigma_{n+1}^{11}  = 
\Sigma_n^{11}   + \alpha_{n+1} \Big(  \Sigma_n^{11}   + A \Sigma_n^{11} 
+ \Sigma_n^{11}A^\transpose 
 + \psi_n ^\transpose  (I +A)^\transpose + {(I+A) \psi_n }  
  +  \clL(\Sigma^{22}_\infty) + \Sigma^{\Noise}  + o(1)\Big)
\label{e:Sigma11An}
\end{equation}
\end{lemma}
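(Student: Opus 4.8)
The plan is to derive the claimed recursion directly from the definition $\Sigma_{n+1}^{11} = (n+1)\Expect[\tiltheta_{n+1}\tiltheta_{n+1}^\transpose]$ together with the NeSA update \eqref{e:accSAn_Zap}, taking $\alpha_{n+1}=1/(n+1)$. Writing $\tiltheta_{n+1}=\tiltheta_n+\Delta\theta_{n+1}$ and expanding the outer product gives four contributions,
\[
\Expect[\tiltheta_{n+1}\tiltheta_{n+1}^\transpose] = \Expect[\tiltheta_n\tiltheta_n^\transpose] + \Expect[\tiltheta_n\Delta\theta_{n+1}^\transpose] + \Expect[\Delta\theta_{n+1}\tiltheta_n^\transpose] + \Expect[\Delta\theta_{n+1}\Delta\theta_{n+1}^\transpose],
\]
and the task reduces to scaling each by $(n+1)$ and re-expressing it through $\Sigma_n^{11}$, $\psi_n$ and $\Sigma^{22}$. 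The first term is immediate: $(n+1)\Expect[\tiltheta_n\tiltheta_n^\transpose]=\tfrac{n+1}{n}\Sigma_n^{11}=\Sigma_n^{11}+\alpha_{n+1}(\Sigma_n^{11}+o(1))$, using that $\{\Sigma_n^{11}\}$ is bounded by (N3) so the discrepancy between $1/n$ and $\alpha_{n+1}$ is absorbed into the remainder.

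For the two cross terms I would condition on $\clF_n$. Since $\tiltheta_n,\Delta\theta_n$ are $\clF_n$-measurable and $\{\tilA_n,\tilb_n\}$ are martingale differences (N1), with $A_{n+1}\theta_n-b_{n+1}=A\tiltheta_n+\Noise_{n+1}$ and $\Expect[\Noise_{n+1}\mid\clF_n]=0$ (the latter because $A\theta^*-b=0$), the update yields $\Expect[\Delta\theta_{n+1}\mid\clF_n]=(I+A)\Delta\theta_n+\alpha_{n+1}A\tiltheta_n$. Hence
\[
\Expect[\tiltheta_n\Delta\theta_{n+1}^\transpose] = \Expect[\tiltheta_n\Delta\theta_n^\transpose](I+A)^\transpose + \alpha_{n+1}\Expect[\tiltheta_n\tiltheta_n^\transpose]A^\transpose.
\]
Recalling $\Expect[\tiltheta_n\Delta\theta_n^\transpose]=n^{-3/2}\Sigma_n^{12}=n^{-2}\psi_n^\transpose$ and $\Expect[\tiltheta_n\tiltheta_n^\transpose]=n^{-1}\Sigma_n^{11}$, multiplying by $(n+1)$ and using $\tfrac{n+1}{n^2}=\alpha_{n+1}(1+O(1/n))$ with boundedness of $\{\psi_n\}$ and $\{\Sigma_n^{11}\}$ gives $(n+1)\Expect[\tiltheta_n\Delta\theta_{n+1}^\transpose]=\alpha_{n+1}\big(\psi_n^\transpose(I+A)^\transpose+\Sigma_n^{11}A^\transpose+o(1)\big)$, and the transposed term contributes $\alpha_{n+1}\big((I+A)\psi_n+A\Sigma_n^{11}+o(1)\big)$. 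Boundedness of $\{\psi_n\}$ is the one fact not contained in (N3); it follows from the second relation of \Lemma{t:OffDiagSigmaAn}, whose proof does not invoke the present lemma, so there is no circularity.

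The final term is handled by the already-available diagonal recursion: $(n+1)\Expect[\Delta\theta_{n+1}\Delta\theta_{n+1}^\transpose]=\alpha_{n+1}\Sigma_{n+1}^{22}$, and \Lemma{t:OffDiagSigmaAn} gives $\Sigma_{n+1}^{22}=\clL(\Sigma_n^{22})+\Sigma^{\Noise}+o(1)$; since $\Sigma_n^{22}\to\Sigma_\infty^{22}$ by \Prop{t:OptVarZapHBAn} and $\clL$ is continuous, this equals $\clL(\Sigma^{22}_\infty)+\Sigma^{\Noise}+o(1)$. Summing the four contributions and collecting the $\alpha_{n+1}$ coefficient reproduces \eqref{e:Sigma11An} term by term. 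The main obstacle is bookkeeping rather than anything conceptual: I must check that every prefactor of the form $\tfrac{n+1}{n^k}$ equals $\alpha_{n+1}$ up to a factor $1+O(1/n)$, and then argue—using the uniform boundedness of $\Sigma_n^{11}$, $\psi_n$ and $\Sigma_n^{22}$—that each such correction, multiplied against a bounded matrix, is genuinely $o(1)$ uniformly, so that it may be folded into the single $o(1)$ inside the parentheses. Particular care is needed because $\psi_n$ carries an extra $\sqrt{n}$ relative to the raw covariance entries, so its boundedness must be imported from \Lemma{t:OffDiagSigmaAn} rather than from (N3) alone.
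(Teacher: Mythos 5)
Your proposal is correct, and in substance it is the same computation as the paper's: the paper obtains \eqref{e:Sigma11An} by extracting the $11$-block of the covariance recursion \eqref{e:MatrixRecursionsAn} (itself derived by expanding the normalized outer product $\Phi_{n+1}\Phi_{n+1}^\transpose$ and taking expectations), substituting $\psi_n=\sqrt{n}\,\Sigma_n^{21}$, and invoking \eqref{e:OffDiagSigmaAn} --- precisely the scale--condition--substitute steps you carry out on $(n+1)\Expect[\tiltheta_{n+1}\tiltheta_{n+1}^\transpose]$ directly. The one genuine organizational difference is your handling of the gain--noise correlation: because you condition on $\clF_n$ before expanding, the correlation terms $\Expect[(I+A_{n+1})(\cdot)\Noise_{n+1}^\transpose]$, which in the paper's $11$-block appear explicitly as $\Sigma^W_n+(\Sigma^W_n)^\transpose$ and must be killed by \Lemma{t:SigmaWn}, are absorbed wholesale into your quadratic term $\alpha_{n+1}\Sigma^{22}_{n+1}$ and then disposed of by the first relation of \Lemma{t:OffDiagSigmaAn} (which already invokes \Lemma{t:SigmaWn} internally); this buys a slightly cleaner $11$-block bookkeeping at no cost, since your cross-term identity $\Expect[\tiltheta_n\Delta\theta_{n+1}^\transpose]=\Expect[\tiltheta_n\Delta\theta_n^\transpose](I+A)^\transpose+\alpha_{n+1}\Expect[\tiltheta_n\tiltheta_n^\transpose]A^\transpose$ is exact under (N1). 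Two small fixes: (i) for $\Sigma_n^{22}\to\Sigma_\infty^{22}$ you should cite the first relation of \eqref{e:OffDiagSigmaAn} together with (N2) --- which is how the paper itself establishes this limit inside the proof of \Prop{t:OptVarZapHBAn} --- rather than the proposition's statement, whose $\Sigma^{11}$ conclusion rests on the present lemma; (ii) your remark on the boundedness of $\{\psi_n\}$ is apt and consistent with the paper, where it is obtained within the proof of \Lemma{t:OffDiagSigmaAn} via the stable recursion $\psi_{n+1}=(I+A)\psi_n+O(1)$, independently of the present lemma, so there is indeed no circularity there.
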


\subparagraph{Proof of \Prop{t:OptVarZapHBAn}:  }


The first approximation in \eqref{e:OffDiagSigmaAn} combined with
(N2) implies that the sequence 
$\{\Sigma_n^{22}\}$ is convergent, and the limit is the solution to the fixed point equation \eqref{e:SigLim22An} (details are provided in  Section~\ref{s:recursions} of the supplementary material).

Substituting the approximation \eqref{e:OffDiagSigmaAn} for $\psi_n$ into 
\eqref{e:Sigma11An} and simplifying gives
\[
\begin{aligned}
\Sigma_{n+1}^{11}  = \Sigma_n^{11} + \alpha_{n+1} \Big( - & \Sigma_n^{11} - \Sigma_\infty^{22} 
 - A^{-1} \Sigma_\infty^{22} - \Sigma_\infty^{22} A^{-1}  + o(1)\Big)
\end{aligned}
\]
This can be regarded as a Euler approximation to the ODE:
\[
\ddt x_t = -x_t  - \Sigma_\infty^{22} - A^{-1} \Sigma_\infty^{22} - \Sigma_\infty^{22} A^{-1}
\]
Stochastic approximation theory can be applied to establish that the limits of $\{\Sigma_n^{11} \}$ and $\{ x_t\}$ coincide with the stationary point \cite{bor08a},
which is \eqref{e:LimSigma11An}.
\qed

\begin{figure*}
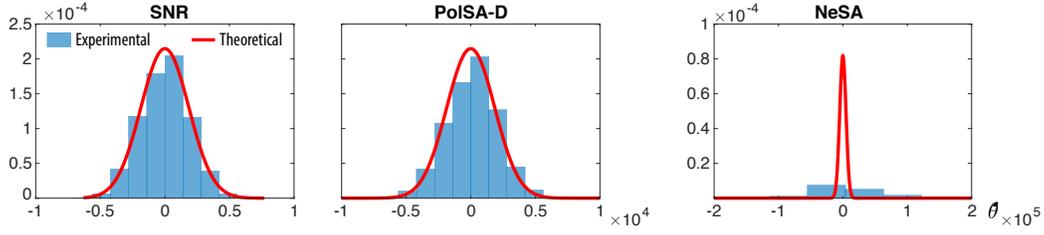


\Ebox{.9}{Hist_ALL_1e6_99_entry18.pdf} 
\vspace{-1.5em}
\caption{Histograms for entry 18 of $\{\sqrt{n}\tiltheta_n\}$ for three algorithms at iteration $10^6$.} 
\label{f:18hist}
 \vspace{-.75em}

\end{figure*}

\vspace{-0.2in}

\section{Application to Q-learning}
\label{s:num}

Consider a discounted cost MDP model with state space $\state$, action space   $\U$,   cost function $c\colon\state\times\U\to\Re$, and discount factor $\beta\in(0,1)$.  It is assumed that the state and action space are finite:
denote $\nd = |\state|$,  $\ninp = |\U|$, and $P_u$ the $\ell \times \ell$  controlled transition probability matrix.



The Q-function  is the solution to the Bellman equation:
\begin{equation}  
Q^*(x,u) = c(x,u) + \beta \Expect [ \min_{u'} Q^*(X_{n+1}, u') \mid X_n = x \,, U_n = u ]
\label{e:BE}
\end{equation}
The goal of Q-learning is to learn an approximation to $Q^*$.
Given $d$ basis functions
$\{\phi_i: 1 \leq i \leq d\}$, with each $\phi_i: \state \times \U \to \Re$, 
and a parameter vector $\theta\in\Re^d$, the   Q-function estimate is denoted
$
Q^\theta(x,u) = \theta^\transpose \phi(x,u)$,
and its minimum,  $\underline{Q}^{\theta} (x) \eqdef \displaystyle \min_{u'} Q^\theta(x, u)$.
 
Watkins' Q-learning algorithm is designed to compute the exact Q-function that solves the Bellman equation \eqref{e:BE} (\cite{wat89,watday92a}).  In this setting, the basis is taken to be the set of indicator functions:  $\phi_i(x,u) = \ind\{x=x^i, u=u^i\}$,  $ 1\le i\le d $, with $d=|\state\times\U|$. {The goal is to find $\theta^* \in \Re^d$ such that $\barf(\theta^*) = 0$, where, for any $\theta \in \Re^d$,
\begin{equation*}
\begin{aligned}
\barf(\theta)  & = \Expect \Big [\phi(X_n,U_n) \Big(   c(X_n,U_n) + \beta  \underline{Q}^{\theta} (X_{n+1})  - Q^{\theta}(X_n,U_n) \Big)   \Big  ]   
\end{aligned} 
\label{e:WatkinsGoal}
\end{equation*}
where the  
expectation is with respect to the steady state distribution of the Markov chain.

The basic algorithm of Watkins can be  written  
\begin{equation}
\Delta \theta_{n+1} =    \alpha_{n+1} \haD_{n+1} \bigl[A_{n+1}\theta_n - b_{n+1} \bigr]
\label{e:linearSA}
\end{equation} 
in which the  matrix gain is diagonal,  with 
$\haD_n(i,i) ^{-1} =  \frac{1}{n} \sum_{k=0}^{n-1} \ind\{ (X_k,U_k) = (x^i,u^i) \} $,  
and with $\pi_n(x) \eqdef \displaystyle \argmin_u Q^{\theta_n}(x,u)$, 
\[
\begin{aligned}
\hspace{-0.065in}A_{n+1}  & =        \phi(X_n,U_n) \{  \beta    \phi (X_{n+1}, \pi_{n}(X_{n+1} ) ) - \phi(X_n,U_n)  \bigr\}^\transpose
\\
b_{n+1} & = c(X_n,U_n)  \phi(X_n,U_n)
\end{aligned}
\]
See  (\cite{csa10}) for more details. 
Among the other algorithms compared are 
\[
\begin{aligned}
& \text{\bf SNR:}  \hspace{0.45in} \Delta \theta_{n+1} =    -\alpha_{n+1} \haA_{n+1}^{-1} \bigl[A_{n+1}\theta_n - b_{n+1} \bigr]
\\ 
& \text{\bf PolSA:}  \hspace{0.35in} \Delta \theta_{n+1} =    [I+      \haA_{n+1}]  \Delta\theta_n  + \alpha_{n+1}   \bigl[A_{n+1}\theta_n - b_{n+1} \bigr]
\\ 
& \text{\bf PolSA-D:}  \hspace{0.2in} \Delta \theta_{n+1} =    [I+      \haD_{n+1} \haA_{n+1}]  \Delta\theta_n  
+ \alpha_{n+1} \haD_{n+1}  \bigl[A_{n+1}\theta_n - b_{n+1} \bigr]
\\
&
  \text{\bf NeSA:} 
  \hspace{0.4in} \Delta \theta_{n+1} =    [I+  A_{n+1}]  \Delta\theta_n  
 				+ \alpha_{n+1} \bigl[A_{n+1}\theta_n - b_{n+1} \bigr] 
\end{aligned}
\]
In each of these algorithms, \eqref{e:haAn} is used to recursively estimate $\haA_{n}$.
We have taken $\Ascale =1$ in PolSA.   The variant PolSA-D is \eqref{e:PolSAa} with $ G_{n+1} = \haD_{n+1} $,
and $ \Mtum_{n+1} $ chosen so that   coupling with SNR can be expected.

\spm{no space:, with $A_{n+1}$ defined above.
}
%

The SNR algorithm considered coincides with the \textit{Zap Q-learning} algorithm of \cite{devmey17b,devmey17a}.  
A simple 6-state MDP model was considered in this prior work, with the objective of finding the stochastic shortest path. \Fig{f:18hist} contains histograms of $\{\sqrt{n}\tiltheta_n\}$ obtained from $1000$ parallel simulations  of PolSA-D, SNR and NeSA algorithms for this problem. It is observed that the histograms of PolSA-D and SNR nearly coincide after $n=10^6$ iterations (performance for PolSA is similar). The histogram for NeSA shows a much higher variance, but the algorithm requires by-far the least computation per iteration. This is specifically true for Watkins' Q-learning since $A_{n+1}$ is a sparse matrix,  with just $2$ non-zero entries.

\begin{figure*}[t]
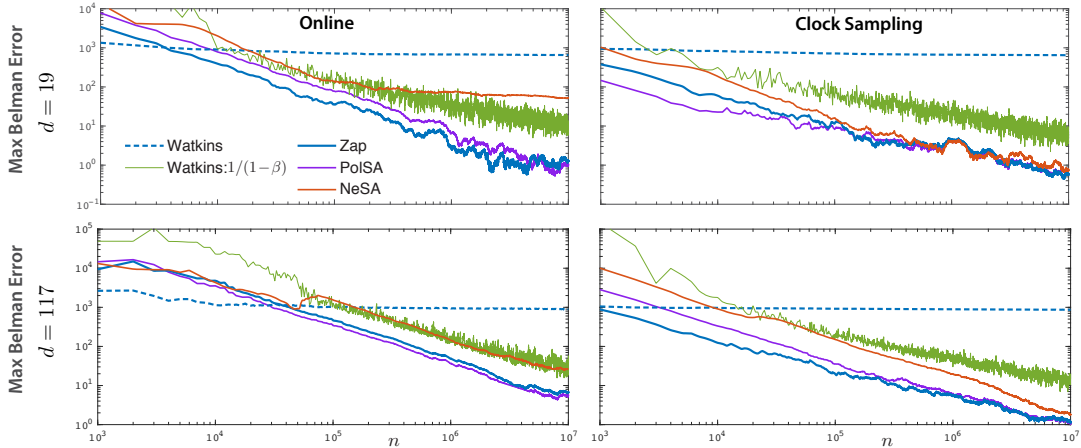

\vspace{.1em}
\Ebox{1}{long_20p25d117+long_10p2d19_ISMP.pdf}
\vspace{-1.5em}
\caption{Bellman error for  $n\le 10^7$ in the shortest path problem with  $d=19$ and $d = 117$.   Deterministic exploration leads to much faster convergence in the NeSA algorithm.} 
\label{f:stoch+det_25_015}
\vspace{-.75em}
\end{figure*}

Experiments were also performed for larger examples.   Results from two such experiments are shown in
\Fig{f:stoch+det_25_015}.     The MDP model is once again a stochastic shortest path problem. The model construction was based on the creation of a graph with $N$ nodes,  in which the probability of an edge between a pair of nodes is i.i.d.\ with probability $p$.   Additional edges $(i,i+1)$ are added, for each $i<N$, to ensure the resulting graph is  strongly connected.
 
The transition law is similar to that used in the  finite state-action example of \cite{devmey17b}: with probability $0.8$ the agent moves in the desired direction, and with remaining probability it ends up in one of the neighboring nodes, chosen uniformly.
Two exploration rules were considered:  the ``online'' version wherein at each iteration the agent randomly selects a feasible action (also known as asynchronous Q-learning),  and the offline ``clock sampling'' approach in which state-action pairs $(x^i,u^i)$ are chosen sequentially (also known as synchronous Q-learning). In the latter, at stage $n$, if $(x,u)$ is the current state-action pair,  a random variable $X_{n+1}'$ is chosen according to the distribution $P_u(x,\varble)$,   and the $(x,u) $ entry of the Q-function is updated according to the particular algorithm using the triple $(x,u,X_{n+1}')$.   A significant change to Watkins' iteration \eqref{e:linearSA} in the synchronous setting is that $\haD_n$ is replaced by $d^{-1} I$ (since each state is visited the same number of times after each cycle).   This combined with deterministic sampling is observed to result in significant variance reduction.   
The \textit{synchronous speedy Q-learning} recursion of \cite{azamunghakap11} appears similar to the NeSA algorithm with clock sampling. 

Two graphs were used in the survey experiments, one resulting in an MDP  with  $d = 19$ state-action pairs and another resulting in a larger MDP with $d = 117$ state-action pairs.  The plots in \Fig{f:stoch+det_25_015} show Bellman error as a function of iteration $n$ for the two cases (for definitions see \cite{bertsi96a,devmey17a}). Comparison of the performance of algorithms in a deterministic exploration setting versus the online setting is also shown. The coupling of PolSA and the Zap algorithms are easily observed in the clock sampling case. In the online case, it is less prominent, but can still be seen when $d = 19$.



%
%


\section{Conclusions}

It is exciting to see how the intuitive transformation from SNR to PolSA and NeSA can be justified theoretically and in simulations.     While the covariance of NeSA is not optimal,  it is the simplest of the three algorithms and,
 performs well in  applications to Q-learning.


An important next step is to create adaptive techniques to ensure fast coupling or other ways to ensure fast forgetting of the initial condition.   It is possible that techniques in \cite{jaikakkidnetsid17}  may be adapted. 
The work can be extended in several ways:
\begin{romannum}
\item It will be of great interest to pursue analysis of the proposed algorithms in the special case of nonlinear optimization. It is possible that the structure of the problem such as convexity of the objective and smoothness of the gradients could help us derive bounds on the transients.

\item The authors in \cite{devmey17b} suggest that their algorithm can be used for Q-learning with function approximation. It would be interesting to see how the PolSA and NeSA algorithms can be extended to this setting. Applications to TD-learning with function approximation is discussed in \Section{s:TD_appendix} of the Appendix.
\end{romannum}


 \clearpage

{
\bibliography{strings,markov,q,ExtrasNIPS18}
}

\appendix

\clearpage

\begin{center}
	{\Large \bf
Appendix}	
\end{center}
%
%
%
%
%
%
%
%
%
%
%
%
%
%
%
%
%
%
%
%
%
%
%
%
%

\section{Proof of \Prop{t:snr}}
\label{s:babySNR}

\textbf{Proof of \Prop{t:snr}}
The proof of the first limit in \eqref{e:SNR_cov_converges} is obtained through Taylor series arguments surveyed in \cite{devmey17a}.  
The second limit in \eqref{e:SNR_cov_converges} follows from the first, and the representation
\[
(n+1)\Delta\tiltheta^*_{n+1}  =   -  [ \tiltheta^*_n  - A^{-1} \Delta_{n+1}  ]
\]
This representation uses \eqref{e:babySNR} and the assumption $\alpha_{n+1}=1/(n+1)$. Consequently,
under the martingale difference property for $\{\Delta_n\}$,
\[
\Sigma_{n+1}^{22} = \frac{1}{n} \Sigma_n  + A^{-1} \Sigma^\Delta_{n+1} (A^{-1})^\transpose   
\]
The first term vanishes because $\{\Sigma_n\}$ is convergent.    Moreover, because $\Delta_{n+1} = \Delta_{n+1}^* + \tilA_{n+1} \tiltheta_n^*$, it then follows from     that $\Sigma^\Delta_{n+1} = \Sigma^\Delta + o(1)$,  and hence
\[
\Sigma_{n+1}^{22} = A^{-1} \Sigma^\Delta  (A^{-1})^\transpose   + o(1)
=   \Sigma^* + o(1).
\]

The proof of \eqref{e:SNR_rep1} is obtained as follows:  it follows from the definition that 
\[
 \tiltheta^\bullet_{n+1} 
	 = 
	  \tiltheta^\bullet_{n} 
	 -  \alpha_{n+1} \haA_{n+1}^{\dagger}  [A_{n+1} \tiltheta^\bullet_n + \Delta^*_{n+1} ]
\]
and also
\[
(n+1) \haA_{n+1} = 
n \haA_{n} +A_{n+1} 
\]
Consequently, when $  \haA_{n+1} ^{-1}$ exists,    so that $ \haA_{n+1}^{\dagger}= \haA_{n+1} ^{-1}$,
\[
\begin{aligned}
(n+1) \haA_{n+1} \tiltheta^\bullet_{n+1}  & =  [n \haA_{n} +A_{n+1} ]  \tiltheta^\bullet_{n} 
-  [A_{n+1} \tiltheta^\bullet_n + \Delta^*_{n+1} ]
\\
&=n \haA_{n}  \tiltheta^\bullet_{n} -\Delta^*_{n+1}  
\end{aligned} 
\]
Summing this telescoping series gives  \eqref{e:SNR_rep1}.   

The proof of  \eqref{e:SNR_rep2} is similar and simpler, since  we immediately obtain from the definition \eqref{e:babySNR},
\[
(n+1) A \tiltheta^*_{n+1}    =  n  A  \tiltheta^\bullet_{n} -\Delta_{n+1}  
\]
Summing each side then gives  \eqref{e:SNR_rep2}.
\qed

\section{Coupling}
\label{s:couple}

We present here the proof of \Prop{t:couple}, based on a transformation of SNR so that it resembles PolSA with a vanishing disturbance sequence.  This is essentially a reversal of the manipulations applied to derive \eqref{e:OurLinearHBdet} from an approximation of \eqref{e:HBmatrix} at the start of \Section{s:momentum}, but now in  a stochastic setting.

\ad{\rd{Why ``approximation" of \eqref{e:HBmatrix}?? And I feel like referring the reader to the deterministic algorithm might be a distraction - but I'll try to clarify.}}

Consider the recursion \eqref{e:PolSA_simple}. For simplicity we take $\Ascale = 1$ (this is without loss of generality by re-defining the matrix $A$).

It is simplest to first prove \Prop{t:couple} when $\{A_n\}$ is deterministic:  $\tilA_n\equiv 0$. The proof of the stochastic case is presented next.

\subsection{Deterministic matrix sequence}  

\begin{lemma}
	\label{t:couplePolSA}
	The SNR and PolSA recursions \eqref{e:SNR_simple}--\eqref{e:PolSA_simple} with deterministic $\{A_n\}$ ($A_n \equiv A$) can be expressed, respectively
	\begin{eqnarray}  
	\Delta\theta^*_{n+1} &=&   [I+      A]  \Delta\theta_n^* 
	+   \alpha_{n+1}  [A \tiltheta_n^* + \Noise_{n+1} ]  +\clE_{n+1}
	\label{e:SNR_det}  
	\\
	\Delta\theta_{n+1} &=&   [I+     A]  \Delta\theta_n  
	+   \alpha_{n+1}  [A \tiltheta_n + \Noise_{n+1} ]\,,
	\label{e:PolSA_det}  
	\end{eqnarray} 
	where $\Noise_n = b - b_n$ and
	\begin{equation}
	\clE_{n+1} = [I+A]  \big ( \Delta\theta^*_{n+1}  -  \Delta\theta^*_n  \big ) \,,  \qquad n\ge 0\,.
	\label{e:clE}
	\end{equation} 
\end{lemma}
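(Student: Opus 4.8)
The plan is to verify both identities directly, as exact algebraic rearrangements of the defining recursions under the linear model; the deterministic hypothesis $A_n\equiv A$ is what makes everything collapse to identities rather than estimates. First I would record the form the linear model \eqref{e:fLin}--\eqref{e:SigmaNoise} takes when $\tilA_{n+1}\equiv 0$: the state-dependent noise drops out, so $\Noise_{n+1}=\Noise_{n+1}^*=A\theta^*-b_{n+1}=b-b_{n+1}$ (using $A\theta^*=b$), which is the claimed identity $\Noise_n=b-b_n$, and $f_{n+1}(\theta_n)=A\tiltheta_n+\Noise_{n+1}$ with $\tiltheta_n=\theta_n-\theta^*$. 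Substituting this into the PolSA definition \eqref{e:PolSA_simple} (with $\Ascale=1$) gives \eqref{e:PolSA_det} with no further work; the PolSA identity is immediate.

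The content is the SNR identity \eqref{e:SNR_det}. Starting from \eqref{e:SNR_simple} I would write $\Delta\theta^*_{n+1}=-\alpha_{n+1}A^{-1}f_{n+1}(\theta^*_n)=-\alpha_{n+1}A^{-1}[A\tiltheta^*_n+\Noise_{n+1}]$ and multiply on the left by $-A$ to obtain $-A\,\Delta\theta^*_{n+1}=\alpha_{n+1}[A\tiltheta^*_n+\Noise_{n+1}]$. The single algebraic step is the identity $-A\,\Delta\theta^*_{n+1}=\Delta\theta^*_{n+1}-[I+A]\Delta\theta^*_{n+1}$, which rearranges to $\Delta\theta^*_{n+1}=[I+A]\Delta\theta^*_{n+1}+\alpha_{n+1}[A\tiltheta^*_n+\Noise_{n+1}]$. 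I would then peel off the correction by writing $[I+A]\Delta\theta^*_{n+1}=[I+A]\Delta\theta^*_n+[I+A](\Delta\theta^*_{n+1}-\Delta\theta^*_n)$ and identifying the last term with $\clE_{n+1}$ as in \eqref{e:clE}. This yields \eqref{e:SNR_det} exactly.

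Since every line is an equality, there is no genuine obstacle; the lemma is a bookkeeping device rather than an estimate, and the only points needing care are the sign in $\Delta\theta^*_{n+1}-[I+A]\Delta\theta^*_{n+1}=-A\,\Delta\theta^*_{n+1}$ and the recognition that the residual term is precisely the $[I+A]$-weighted second difference of the SNR iterates. The real purpose is to set up the coupling argument of \Prop{t:couple}: \eqref{e:SNR_det} and \eqref{e:PolSA_det} are driven by the \emph{same} effective input $\alpha_{n+1}[A\tiltheta^*_n+\Noise_{n+1}]$ and differ only by $\clE_{n+1}$, so the error $\theta_n-\theta^*_n$ obeys a stable $[I+A]$-recursion forced solely by $\{\clE_n\}$. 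Because \Prop{t:snr} gives $\Delta\theta^*_n=O(n^{-1})$ in mean square, the second difference makes $\clE_{n+1}=O(n^{-2})$, which is exactly what drives the $n^2$ coupling bound.
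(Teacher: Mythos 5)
Your proof is correct and is in substance identical to the paper's: the PolSA identity is the same direct substitution of the linear model into \eqref{e:PolSA_simple}, and your step $-A\,\Delta\theta^*_{n+1}=\Delta\theta^*_{n+1}-[I+A]\Delta\theta^*_{n+1}$ followed by peeling off $\clE_{n+1}=[I+A](\Delta\theta^*_{n+1}-\Delta\theta^*_n)$ is exactly the paper's manipulation (multiply \eqref{e:SNR_simple} by $A$, rewrite, and move $\Delta\theta^*_{n+1}$ to the left) in a slightly different order. One caveat on your closing aside, which lies outside the lemma itself: the paper's \Lemma{t:bartheta_coupling} only gives $\clE_n=O(1/n)$, since $(n+1)(\Delta\theta^*_{n+1}-\Delta\theta^*_n)=-2\Delta\theta^*_n-A^{-1}(\Noise_{n+1}-\Noise_n)$ contains the noise increment, which is bounded but does not vanish, so $\clE_{n+1}$ is \emph{not} $O(n^{-2})$; the $n^2$ coupling bound instead comes from the boundedness of the telescoping partial sums $\sum_{k\le n} k\,\clE_k$.
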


\begin{proof}
	Recall the simplified PolSA recursion \eqref{e:PolSA_simple}: 
	\[
	\Delta\theta_{n+1} = [I+     A]  \Delta\theta_n  
	+   \alpha_{n+1}  f_{n+1}(\theta_n)  
	\]
	Substituting the linear model \eqref{e:fLin} into the above recursion, \eqref{e:PolSA_det}  is obtained. Furthermore, in the special setting $A_n \equiv A$, the noise sequence $\Delta_n$ in \eqref{e:SigmaNoise} becomes $\Delta_n = \Delta_n^* = b - b_n$.
	
	Next, recall the SNR recursion \eqref{e:SNR_simple}:
	\[
	\Delta\theta^*_{n+1} = -  \alpha_{n+1} A^{-1}  f_{n+1}(\theta_n^*)  
	\]
	Multiplying both sides of the above recursion by $A$, and substituting for $f_{n+1}$ the linear model \eqref{e:fLin} gives
	\[
	\begin{aligned}
	0  &= A\Delta \theta_{n+1}^* + \alpha_{n+1}  [A \tiltheta_n^* + \Noise_{n+1}   ]
	\\
	&=  A\Delta \theta_n^* + \alpha_{n+1}  [A \tiltheta_n^* + \Noise_{n+1}]  -\big(  \Delta\theta^*_{n+1}  -  \Delta\theta^*_n \big ) + \clE_{n+1} 
	\end{aligned}
	\]
	where once again $\Delta_n = b - b_n$, and $\clE_{n+1}$ is defined in \eqref{e:clE}.
	Moving  $ \Delta\theta^*_{n+1} $ to the left-hand side completes the proof of \eqref{e:SNR_det}.
\end{proof}

Denote:
\begin{equation}
\bartheta_n \eqdef \theta_n -\theta_n^* \qquad \xi_n \eqdef n\bartheta_n
\label{e:bartheta_and_xi}
\end{equation}
The proof of \Prop{t:couple} requires that we establish uniform bounds on each of these sequences.    

Denoting $\Delta\bartheta_n \eqdef \bartheta_n - \bartheta_{n-1}$, the following lemma establishes a recursion for $\{\bartheta_n : n \geq 0 \}$ that is similar to the PolSA recursion \eqref{e:PolSA_det}:
\begin{lemma}
	\label{t:bartheta_coupling}
	The error sequence $\{\bartheta_n : n \geq 0 \}$ evolves according to the recursion
	\begin{equation}
	\Delta\bartheta_{n+1} =   [I+     A]  \Delta\bartheta_n  
	+   \alpha_{n+1} A \bartheta_n  -\clE_{n+1}
	\label{e:bartheta_coupling}
	\end{equation}
	in which the  sequence  $\{ \clE_n : n \geq 1\}$ defined in \eqref{e:clE} satisfies the following for a constant $b_\clE< \infty$:
	\begin{romannum}
		\item $\{n \clE_n : n\ge 1\}$ is a bounded sequence: $\| n  \clE_n \| \leq b_{\clE}$ for all $n \geq 1$.
		\item  Its partial sums are also bounded: Defining
		\[
		S_n^\clE \eqdef \sum_{k=1}^n k\clE_k\,, \quad n\ge 1\, ,
		\]
		$\| S_n^\clE \| \leq b_\clE$ for all $n \geq 1$.
	\end{romannum}
\end{lemma}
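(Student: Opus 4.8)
The plan is to obtain the recursion \eqref{e:bartheta_coupling} by a direct subtraction of the two recursions in \Lemma{t:couplePolSA}, and then to establish the two bounds on $\{\clE_n\}$ from the closed-form representation of the SNR error in \Prop{t:snr} together with a summation-by-parts argument.

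For the recursion, I subtract the SNR identity \eqref{e:SNR_det} from the PolSA recursion \eqref{e:PolSA_det}. Since $\Delta\theta_{n+1}-\Delta\theta^*_{n+1}=\Delta\bartheta_{n+1}$, $\Delta\theta_n-\Delta\theta^*_n=\Delta\bartheta_n$, and $\tiltheta_n-\tiltheta^*_n=\bartheta_n$, the common noise terms $\alpha_{n+1}\Noise_{n+1}$ cancel and one is left with exactly \eqref{e:bartheta_coupling}. This step is immediate. The substantive content is in the two bounds.

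The bounds rest on controlling $\Delta\theta^*_n$. Writing $W_n\eqdef\sum_{k=1}^n\Noise_k$ with $\Noise_k=b-b_k$, the representation \eqref{e:SNR_rep2} reads $\tiltheta^*_n=-A^{-1}W_n/n$. Assumption \textbf{(A2)} furnishes a pathwise bound $\|\Noise_k\|\le b_\Noise$, whence $\|W_n/n\|\le b_\Noise$ and $\{\tiltheta^*_n\}$ is uniformly bounded. Differencing the representation yields
\[
n\Delta\theta^*_n = -A^{-1}\Bigl[\Noise_n - \frac{1}{n-1}W_{n-1}\Bigr],
\]
which is therefore uniformly bounded in $n$. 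For part (i) I write $n\clE_{n+1}=[I+A]\bigl(n\Delta\theta^*_{n+1}-n\Delta\theta^*_n\bigr)$; both $n\Delta\theta^*_n$ and $n\Delta\theta^*_{n+1}$ (the latter because $(n+1)\Delta\theta^*_{n+1}$ is bounded) are uniformly bounded, so $\|n\clE_n\|\le b_\clE$.

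For part (ii) a termwise estimate is useless, since each $k\clE_k$ is only $O(1)$ and naive summation gives $O(n)$. The remedy is to exploit that, by \eqref{e:clE}, $\clE_k=[I+A](\Delta\theta^*_k-\Delta\theta^*_{k-1})$ is a first difference. Summation by parts then collapses $S_n^\clE=[I+A]\sum_{k=1}^n k(\Delta\theta^*_k-\Delta\theta^*_{k-1})$ into boundary terms, and using $\Delta\theta^*_0=0$ together with the telescoping identity $\sum_{k=1}^{n-1}\Delta\theta^*_k=\tiltheta^*_{n-1}-\tiltheta^*_0$ gives
\[
S_n^\clE = [I+A]\bigl[\,n\Delta\theta^*_n - \tiltheta^*_{n-1} + \tiltheta^*_0\,\bigr].
\]
All three terms on the right are uniformly bounded by the estimates above, so $\|S_n^\clE\|\le b_\clE$.

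The main obstacle is part (ii): the required uniform bound cannot come from the size of the individual summands, and the proof hinges on recognizing the first-difference structure of $\clE_k$ and applying summation by parts so that the weighted sum reduces to bounded boundary data. Throughout, the workhorse is the exact representation \eqref{e:SNR_rep2}, which rewrites every quantity as a normalized partial sum of the bounded noise $\{\Noise_k\}$, reducing all estimates to elementary pathwise bounds.
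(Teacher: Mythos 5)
Your proof is correct, and it arrives at the same place as the paper's by a mildly different mechanism. The paper works directly from the SNR recursion \eqref{e:babySNR}: using $\alpha_{n+1}=1/(n+1)$ it derives the pointwise identity $(n+1)\bigl(\Delta\theta^*_{n+1}-\Delta\theta^*_n\bigr)=-2\Delta\theta^*_n - A^{-1}(\Noise_{n+1}-\Noise_n)$ (eqn.~\eqref{e:l_clE}), from which part (i) is immediate, and part (ii) follows because the right-hand side visibly telescopes to $-2\theta^*_{n-1}+2\theta^*_0 - A^{-1}(\Noise_n-\Noise_0)$. You instead start from the closed-form representation \eqref{e:SNR_rep2}, $\tiltheta^*_n=-A^{-1}W_n/n$, difference it to obtain $n\Delta\theta^*_n=-A^{-1}\bigl[\Noise_n - W_{n-1}/(n-1)\bigr]$, and collapse $S_n^\clE$ by Abel summation against the first-difference structure of $\clE_k$ to the boundary data $n\Delta\theta^*_n$, $\tiltheta^*_{n-1}$, $\tiltheta^*_0$. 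Algebraically these are the same manipulation (telescoping a derived identity versus summation by parts), but your route buys something concrete: the paper's proof simply asserts that $\{\theta^*_n\}$ and $\{\Noise_n\}$ are bounded ``under the assumptions of \Prop{t:couple}'' --- a point the authors themselves flagged as needing a citation --- whereas you derive $\|\tiltheta^*_n\|\le \|A^{-1}\|\,b_\Noise$ and the uniform bound on $n\Delta\theta^*_n$ explicitly from \eqref{e:SNR_rep2} together with the pathwise noise bound supplied by (A2) (in the deterministic case $\Noise_k=b-b_k=-\tilb_k$), making the lemma self-contained. Two trivial touch-ups: your differenced formula for $n\Delta\theta^*_n$ requires $n\ge 2$, so handle $n=1$ separately via $\Delta\theta^*_1=\tiltheta^*_1-\tiltheta^*_0$; and the summation-by-parts identity uses $\Delta\theta^*_0=0$, which is exactly the paper's standing convention $\theta_0=\theta_{-1}$ and is worth stating.
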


\begin{proof}
	Representation \eqref{e:bartheta_coupling} directly follows by subtracting \eqref{e:SNR_det} from \eqref{e:PolSA_det}. We now prove that the error sequence $\{n \clE_n : n\ge 1\}$ satisfies the properties in (i) and (ii).
	
	Recalling that $\alpha_{n+1} = 1/(n+1)$ in the SNR recursion \eqref{e:SNR_simple} we have: 
	\begin{equation}
	\Delta\theta_{n+1}^* = - (n+1)^{-1}(\theta_n^* + A^{-1}\Noise_{n+1}).
	\label{e:SNR_delta_theta_n}
	\end{equation}
	Consequently,
	\begin{equation}
	\begin{aligned}
	(n+1)  \big(  \Delta\theta^*_{n+1}  -  \Delta\theta^*_n  \big) 
	&=  (n+1)     \Delta\theta^*_{n+1}  -  n \Delta\theta^*_n   - \Delta\theta^*_n  
	\\
	& =  - (\theta_n^* + A^{-1}\Noise_{n+1}) + (\theta_{n-1}^* + A^{-1}\Noise_{n})  - \Delta\theta^*_n  
	\\
	&
	=- 2\Delta\theta^*_n   - A^{-1}(\Noise_{n+1} - \Noise_n)
	\end{aligned}
	\label{e:l_clE}
	\end{equation}
	where the second equality follows from \eqref{e:SNR_delta_theta_n}, and the last equality is obtained by combining the common terms.  Under the assumption of the \Prop{t:couple}, the sequences $\{\theta^*_{n} : n \geq 0\}$ and $\{\Delta_{n} : n \geq 0\}$ are bounded. Therefore, the right hand side of \eqref{e:l_clE} is also bounded, and multiplying both sides of the equation by $[I + A]$, part (i) of the lemma follows.
	It is also easy to see that the right hand side of \eqref{e:l_clE} is a telescoping sequence. Therefore, for each $n \geq 1$,
		\[
		\begin{aligned}
		\sum_{k=1}^n k \big( \Delta\theta^*_{k}  -  \Delta\theta^*_{k - 1} \big )   & = \sum_{k=1}^n \big( - 2\Delta\theta^*_{k - 1}   - A^{-1}(\Noise_{k} - \Noise_{k - 1}) \big )
		\\
		& = - 2 \theta^*_{n - 1} + 2 \theta_0^*  - A^{-1}(\Noise_{n} - \Noise_{0})
		\end{aligned}
		\]
		Once again, the right hand side of the above equation is uniformly bounded in $n$ under the assumptions of \Prop{t:couple}.  
			\ad{All this in colors because I don't know what to cite for boundedness of $\theta_n^*$! - which is the main thing used in the proof.
			\\
			sm:  I added consistency to the first   proposition,  so theta-n* is bounded}
\end{proof}

\begin{lemma}
	\label{t:bartheta_coupling2}
	The normalized error sequence $\{\xi_n: n \geq 0\}$ defined in \eqref{e:bartheta_and_xi} satisfies the following recursion:
	\begin{eqnarray}  
	\Delta\xi_{n+1} = \hspace{-0.15in}&&[I+A]  \Bigl(  \Delta\xi_n +2 [ \bartheta_n -  \bartheta_{n-1} ]   \Bigr)  
	-(n+1)\clE_{n+1} 
	\label{e:Delta_xi_1}
	\end{eqnarray} 
	where $\Delta \xi_n \eqdef \xi_n - \xi_{n-1}$.
	\qed
\end{lemma}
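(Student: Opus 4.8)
The plan is purely algebraic: I would express $\Delta\xi_{n+1}$ in terms of $\Delta\bartheta_{n+1}$ and then invoke the recursion \eqref{e:bartheta_coupling} established in \Lemma{t:bartheta_coupling}. First, using $\xi_n = n\bartheta_n$ together with $\bartheta_{n+1} = \bartheta_n + \Delta\bartheta_{n+1}$, I would write
\[
\Delta\xi_{n+1} = (n+1)\bartheta_{n+1} - n\bartheta_n = \bartheta_n + (n+1)\Delta\bartheta_{n+1}.
\]
Substituting \eqref{e:bartheta_coupling} into the right-hand side, the key simplification is that $\alpha_{n+1} = 1/(n+1)$, so the gain term $(n+1)\alpha_{n+1} A\bartheta_n$ collapses to $A\bartheta_n$. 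This gives
\[
\Delta\xi_{n+1} = \bartheta_n + (n+1)[I+A]\Delta\bartheta_n + A\bartheta_n - (n+1)\clE_{n+1},
\]
and combining $\bartheta_n + A\bartheta_n = [I+A]\bartheta_n$ lets me factor out the operator:
\[
\Delta\xi_{n+1} = [I+A]\bigl(\bartheta_n + (n+1)\Delta\bartheta_n\bigr) - (n+1)\clE_{n+1}.
\]

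It then remains to identify $\bartheta_n + (n+1)\Delta\bartheta_n$ with the bracketed expression in the claim. Carrying out the same index-shifted computation as in the first display yields $\Delta\xi_n = \bartheta_n + (n-1)\Delta\bartheta_n$, whence
\[
\bartheta_n + (n+1)\Delta\bartheta_n = \Delta\xi_n + 2\Delta\bartheta_n = \Delta\xi_n + 2[\bartheta_n - \bartheta_{n-1}],
\]
using $\Delta\bartheta_n = \bartheta_n - \bartheta_{n-1}$. Substituting this back into the previous display gives exactly \eqref{e:Delta_xi_1}, completing the argument.

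There is no substantive obstacle here: the statement is a bookkeeping identity, and the only point requiring real care is the cancellation $(n+1)\alpha_{n+1}=1$, which merges the gain term with $\bartheta_n$ to reproduce the operator $[I+A]$ that governs the homogeneous part of the recursion. I would simply verify index alignment with some care — in particular that the multiplier of $\clE_{n+1}$ is exactly $(n+1)$ rather than $n$ — since that coefficient is precisely what the boundedness estimates in \Lemma{t:bartheta_coupling}(i)--(ii) are designed to control when \eqref{e:Delta_xi_1} is later used to bound $\{\xi_n\}$ and establish the coupling rate \eqref{e:couple}.
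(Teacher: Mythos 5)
Your proposal is correct and follows essentially the same route as the paper's proof: the identity $\Delta\xi_{n+1} = \bartheta_n + (n+1)\Delta\bartheta_{n+1}$, substitution of \eqref{e:bartheta_coupling} with the cancellation $(n+1)\alpha_{n+1}=1$, factoring out $[I+A]$, and identification of the bracket with $\Delta\xi_n + 2[\bartheta_n - \bartheta_{n-1}]$. The only difference is cosmetic — you verify the final identification via the index-shifted form $\Delta\xi_n = \bartheta_n + (n-1)\Delta\bartheta_n$, while the paper expands everything in terms of $\bartheta_n,\bartheta_{n-1}$ directly — and your closing check that the coefficient of $\clE_{n+1}$ is exactly $(n+1)$ is the right point of care.
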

\begin{proof}
	By definition of $\xi_n$ in \eqref{e:bartheta_and_xi}, we have,
	\begin{equation}
	\begin{aligned}
	\Delta \xi_{n+1} 
	& = \xi_{n+1} - \xi_n  
	\\
	& = (n+1) \bartheta_{n+1} - n \bartheta_n
	\\
	& = (n+1) ( \theta_{n+1} - \theta^*_{n+1} ) - n ( \theta_n - \theta_n^* )  
	\\
	& = (n+1) \Delta \theta_{n+1} - (n + 1) \Delta \theta_{n+1}^* +  \theta_n - \theta_n^* 
	\\
	& = (n+1) \Delta \bartheta_{n+1}  + \bartheta_n
	\end{aligned}
	\end{equation}
	Substituting for $\Delta \bartheta_{n+1}$ using \eqref{e:bartheta_coupling} in the above equation, we obtain:
	\begin{equation}
	\begin{aligned}
	\Delta \xi_{n+1} 
	& = (n+1) [I + A] \Delta \bartheta_{n}  + A \bartheta_n + \bartheta_{n} - (n+1) \clE_{n+1}
	\\
	& = [I + A] \Big(  n \bartheta_n - (n - 1) \bartheta_{n-1} + \bartheta_n - 2 \bartheta_{n-1} + \bartheta_{n} \Big) - (n+1) \clE_{n+1}
	\\
	& = [I + A] \Big(  \Delta \xi_n  +  2 \Delta \bartheta_{n} \Big) - (n+1) \clE_{n+1}
	\end{aligned}
	\end{equation}
\end{proof}

We are now ready to provbe \Prop{t:couple} for the deterministic case.

\subparagraph{Proof of \Prop{t:couple} -- deterministic case:}
	On summing each side of the identity \eqref{e:Delta_xi_1} in \Lemma{t:bartheta_coupling2}
	we obtain, for any {$n > m\ge 2$},
	\[
	\begin{aligned}
	\xi_{n+1} -  \xi_{m}&= [I+A]  \Bigl(  \xi_{n} -  \xi_{m-1} +2 [ \bartheta_n -  \bartheta_{m-1} ]   \Bigr)  
	-  \sum_{k=m}^n  (k+1)\clE_{k+1}  
	\end{aligned}
	\]
	Using the definition $\bartheta_n = \xi_n / n$ then gives
	\[
	\xi_{n+1} =
	[I+A]\Bigl( (1+2n^{-1} )   \xi_n  \Bigr)  + \xi_{m}  -
	(1 + 2(m-1)^{-1}) [I+A]    \xi_{m-1}     -  \sum_{k=m}^n  (k+1)\clE_{k+1}
	\]
	
	{Letting m = 2 in the above recursion,}
	\begin{equation}
	\xi_{n+1} =
	[I+A]\Bigl( (1+2n^{-1} )   \xi_n  \Bigr)  +  b_{n+1}^\xi
	\label{e:xi_recursion}
	\end{equation}
	where the final term $ b_{n+1}^\xi$ is bounded in $n$ (part (ii) of \Lemma{t:bartheta_coupling}):
	\[
	b_{n+1}^\xi =  \xi_{2}  -
	3 [I+A]    \xi_{1}     -  \sum_{k=2}^n  (k+1)\clE_{k+1}  
	\]

	Since by assumption, all eigenvalues of $[I+A]$ satisfy $\lambda\big( I + A\big) < 1$, the recursion \eqref{e:xi_recursion} can be viewed as a stable linear system with bounded input $\{b_{n}^\xi\}$. Therefore, for each $Q>0$, there exists a matrix $M > 0$ satisfying the discrete time Lyapunov equation \cite{kai80lin}:
	\begin{equation}
	[I + A]^\transpose M [I + A] = M - Q
	\label{e:lyap}
	\end{equation}
	Choosing $Q = I$, and noting that $I \geq \epsy M$ for some $\epsy > 0 $ that is small enough, we have:
	\begin{equation}
	[I + A]^\transpose M [I + A] \leq \delta^2 M 
	\label{e:lyap_epsy}
	\end{equation}
	where $\delta^2 = 1 - \epsy < 1$.
	Denote $\| \cdot \|_M$ to be the weighted norm with respect to the matrix $M$ that satisfies \eqref{e:lyap_epsy}: For all $\xi \in \Re^d$,
	\begin{equation}
	\|\xi \|_M \eqdef \big( \xi^\transpose M \xi  \big)^{\half}
	\label{e:weighted_norm_M}
	\end{equation}
	Applying the triangle inequality to \eqref{e:xi_recursion}  gives
	\begin{equation}
	\begin{aligned}
	\| \xi_{n+1} \|_M 
	& \leq
	(1+2n^{-1} ) \cdot  \| [I+A]    \xi_n  \|_M  +  \| b_{n+1}^\xi \|_M
	\\
	& \leq
	\delta (1+2n^{-1} ) \cdot \|   \xi_n  \|_M  +  \| b_{n+1}^\xi \|_M
	\label{e:xi_recursion_norm}
	\end{aligned}
	\end{equation}
	Choosing $N_0$ large enough, such that $( 1+2n^{-1} ) \delta < \sqrt{\delta}$ for all $n \geq N_0$,  we have
	\begin{equation}
	\begin{aligned}
	\| \xi_{n+1} \|_M  
	& \leq
	\sqrt{\delta} \cdot \|  \xi_n  \|_M   +  \| b_{n+1}^\xi \|_M \,, \qquad n \geq N_0
	\label{e:xi_recursion_norm_2}
	\end{aligned}
\end{equation}
Consequently, for each $k\ge 1$,   
	\begin{equation}
	\begin{aligned}
	\| \xi_{N_0+k} \|_M  
	& \leq
	{\delta}^{\frac{k}{2}} \cdot \|  \xi_{N_0}  \|_M   + \sum_{i = 0}^{k - 1}  {\delta}^{i/2} \| b_{N_0 + k - i+1}^\xi \|_M 
	\\
	& \leq
	{\delta}^{\frac{k}{2}} \cdot \|  \xi_{N_0}  \|_M   +  \Big( \sum_{i = 0}^{\infty} \delta^{i/2} \Big) \sup_{k \geq 0}  \| b_{n_0 + k}^\xi \|_M   < \infty
	\label{e:xi_recursion_bdd}
	\end{aligned}
	\end{equation} 
	 \qed

\subsection{Proof of \Prop{t:couple} for the general linear algorithm}  

The major difference in the case of random $\{A_n\}$ is that the identity 
\eqref{e:bartheta_coupling} holds with a modified error sequence:
\begin{equation}
\Delta\bartheta_{n+1} =   [I+     A]  \Delta\bartheta_n  
+   \alpha_{n+1} A \bartheta_n  -\clE_{n+1}  + \alpha_{n+1} \tilA_{n+1}\bartheta_n \,,
\label{e:bartheta_coupling_An}
\end{equation}
where the last term appears because we are replacing the last but one term $\alpha_{n+1} A \bartheta_n$ in \eqref{e:bartheta_coupling} with $\alpha_{n+1} A_{n+1} \bartheta_n$.  The error sequence $\{\clE_{n}\}$ is identical to the determinsitic case, and therefore satisfies the properties in \Lemma{t:bartheta_coupling}.
\Lemma{t:bartheta_coupling2} however must be modified due to the additional term (the proof follows exactly the same lines):
\begin{lemma}
	\label{t:bartheta_coupling2_An}
	For the general linear algorithm, the normalized error sequence $\{\xi_n: n \geq 0\}$ defined in \eqref{e:bartheta_and_xi} satisfies the following recursion:
	\begin{eqnarray}  
	\Delta\xi_{n+1} = \hspace{-0.15in}&&[I+A]  \Bigl(  \Delta\xi_n +2 [ \bartheta_n -  \bartheta_{n-1} ]   \Bigr)  
	-(n+1)\clE_{n+1} + \tilA_{n+1}\bartheta_n
	\label{e:Delta_xi_1_An}
	\end{eqnarray} 
	where $\Delta \xi_n \eqdef \xi_n - \xi_{n-1}$.
	\qed
\end{lemma}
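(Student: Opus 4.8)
The plan is to mirror, line for line, the deterministic computation that established \Lemma{t:bartheta_coupling2}, carrying along the single additional term that distinguishes the modified recursion \eqref{e:bartheta_coupling_An} from its deterministic counterpart. This is a purely pathwise algebraic identity, so no probabilistic machinery is needed: the randomness of $\{A_n\}$ enters only through the factor $\tilA_{n+1}$, which will appear as an explicit additive term and never interact with the index-shift bookkeeping.

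First I would start from the definition $\xi_n \eqdef n\bartheta_n$ in \eqref{e:bartheta_and_xi} and expand $\Delta\xi_{n+1} = \xi_{n+1} - \xi_n = (n+1)\bartheta_{n+1} - n\bartheta_n$. Splitting $(n+1)\bartheta_{n+1} = (n+1)\Delta\bartheta_{n+1} + (n+1)\bartheta_n$ yields the clean intermediate identity $\Delta\xi_{n+1} = (n+1)\Delta\bartheta_{n+1} + \bartheta_n$, exactly as in the deterministic proof. Next I would substitute the modified recursion \eqref{e:bartheta_coupling_An} for $\Delta\bartheta_{n+1}$. Because $\alpha_{n+1} = 1/(n+1)$, multiplication by $(n+1)$ turns $\alpha_{n+1}A\bartheta_n$ and $\alpha_{n+1}\tilA_{n+1}\bartheta_n$ into $A\bartheta_n$ and $\tilA_{n+1}\bartheta_n$ respectively, giving $\Delta\xi_{n+1} = (n+1)[I+A]\Delta\bartheta_n + [I+A]\bartheta_n - (n+1)\clE_{n+1} + \tilA_{n+1}\bartheta_n$, where I have already merged $A\bartheta_n + \bartheta_n = [I+A]\bartheta_n$.

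The only step requiring genuine care is collecting the two $[I+A]$ terms. I would factor to obtain $[I+A]\big((n+1)\Delta\bartheta_n + \bartheta_n\big) = [I+A]\big((n+2)\bartheta_n - (n+1)\bartheta_{n-1}\big)$ and then verify the identity $(n+2)\bartheta_n - (n+1)\bartheta_{n-1} = \Delta\xi_n + 2\Delta\bartheta_n$, which follows immediately by inserting $\Delta\xi_n = n\bartheta_n - (n-1)\bartheta_{n-1}$ and $\Delta\bartheta_n = \bartheta_n - \bartheta_{n-1}$. This reproduces precisely the leading term $[I+A]\big(\Delta\xi_n + 2[\bartheta_n - \bartheta_{n-1}]\big)$ of the deterministic recursion \eqref{e:Delta_xi_1}; the term $-(n+1)\clE_{n+1}$ is untouched, and the extra contribution $\tilA_{n+1}\bartheta_n$ survives additively, producing exactly \eqref{e:Delta_xi_1_An}.

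I do not anticipate a real obstacle: the computation is identical to the deterministic case apart from transporting the new term, and the fact that it enters with prefactor $\alpha_{n+1}(n+1) = 1$ is what keeps it free of any residual scaling. The one place to double-check is the index consistency in the collection step, since an off-by-one slip there would spoil the cancellation that yields the $\Delta\xi_n + 2[\bartheta_n - \bartheta_{n-1}]$ form.
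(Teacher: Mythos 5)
Your proposal is correct and follows essentially the same route as the paper, which establishes this lemma by noting that the proof of the deterministic \Lemma{t:bartheta_coupling2} goes through verbatim with the extra term $\alpha_{n+1}\tilA_{n+1}\bartheta_n$ from \eqref{e:bartheta_coupling_An} carried along, where the prefactor $\alpha_{n+1}(n+1)=1$ leaves it as $\tilA_{n+1}\bartheta_n$. Your index bookkeeping in the collection step, $(n+1)\Delta\bartheta_n + \bartheta_n = (n+2)\bartheta_n - (n+1)\bartheta_{n-1} = \Delta\xi_n + 2\Delta\bartheta_n$, matches the paper's computation exactly.
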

The proof then proceeds as in the previous deterministic setting, except that we have to deal with the additional martingale difference sequence   $\{ \tilA_{n+1}\bartheta_n \}$.

\subparagraph{Proof of \Prop{t:couple} -- general linear algorithm}
	On summing each side of the identity \eqref{e:Delta_xi_1_An} in \Lemma{t:bartheta_coupling2_An}
	we obtain, for any {$n > m\ge 2$},
	\[
	\begin{aligned}
	\xi_{n+1} -  \xi_{m}&= [I+A]  \Bigl(  \xi_{n} -  \xi_{m-1} +2 [ \bartheta_n -  \bartheta_{m-1} ]   \Bigr)  
	-  \sum_{k=m}^n  (k+1)\clE_{k+1}  + \sum_{k=m}^n \tilA_{k+1}\bartheta_k
	\end{aligned}
	\]
	Using the definition $\bartheta_n = \xi_n / n$ then gives
 \[
 \begin{aligned} 
	\xi_{n+1} &=
	[I+A]\Bigl( (1+2n^{-1} )   \xi_n  \Bigr)  + \xi_{m}  -
	(1 + 2(m-1)^{-1}) [I+A]    \xi_{m-1}    
	\\&
	\qquad\qquad  -  \sum_{k=m}^n  (k+1)\clE_{k+1} + \sum_{k=m}^n \frac{1}{k} \tilA_{k+1}  \xi_k
\end{aligned} 
\]
	
	The above recursion can be rewritten as,
	\begin{equation}
	\xi_{n+1} =
	[I+A]\Bigl( (1+2n^{-1} )   \xi_n  \Bigr)  +  b_{m, n+1}^\xi + U_{m, n}^{\tilA}
	\label{e:xi_recursion_An}
	\end{equation}
	where:
	\[
	\begin{aligned}
	b_{m, n+1}^\xi & \eqdef  \xi_{m}  -
	(1 + 2(m-1)^{-1}) [I+A]    \xi_{m-1}    -  \sum_{k=m}^n  (k+1)\clE_{k+1}  
	\\
	U_{m, n}^{\tilA} & \eqdef \sum_{k=m}^n \frac{1}{k} \tilA_{k+1}  \xi_k
	\end{aligned}
	\]

\ad{\rd{TYPO HERE??}}
The proof is now similar to the case $A_n\equiv A$, except that we now a weighted $L_2$ norm to obtain $L_2$ bounds.    Let $M$ denote a solution to 	  \eqref{e:lyap_epsy} with $\delta\in (0,1)$, and define for any $d$-dimensional random vector $Z$,
\[
\| Z\|_M^\star =  \sqrt{\Expect[\| Z_M\|^2 ] } =\sqrt{ \Expect[Z^\transpose M Z] } \, .  
\]
Applying the triangle inequality to \eqref{e:xi_recursion_An} gives
	\begin{equation}
	\begin{aligned}
	\| \xi_{n+1} \|_M 
	& \leq
	(1+2n^{-1} ) \cdot  \| [I+A]    \xi_n  \|_M  +  \| b_{m, n+1}^\xi \|_M +  \| U_{m, n}^{\tilA} \|_M
	\\
	& \leq
	\delta (1+2n^{-1} ) \cdot \|   \xi_n  \|_M  +  \| b_{m,n+1}^\xi \|_M +  \| U_{m, n}^{\tilA} \|_M
	\label{e:xi_recursion_norm_An}
	\end{aligned}
	\end{equation}

Part (ii) of \Lemma{t:bartheta_coupling} implies the following $L_2$ bound:  for some $\overline{\sigma}_\xi < \infty$,
\begin{equation}
	\| b_{m, n+1}^{\xi} \|_M \leq \overline{\sigma}_\xi  \,,  \quad n>m\ge 2\, .
	\label{e:prop_2_bmn}
\end{equation}
	Furthermore, for all $m \geq 2$ and $n > m$, we have:
	\begin{equation}
	\begin{aligned}
	\| U_{m, n}^{\tilA} \|^{\star2}_M 
	& = \Expect\Bigl[ \| \sum_{k=m}^n \frac{1}{k} \tilA_{k+1}  \xi_k \|_M^2  \Bigr]
	\\
	& = \Expect \Big[  \big(\sum_{k=m}^n \frac{1}{k} \tilA_{k+1} \xi_k \big)^\transpose  M \big(\sum_{k=m}^n \frac{1}{k} \tilA_{k+1} \xi_k \big)  \Big ]
	\\
	& = \sum_{k=m}^n \frac{1}{k^2} \Expect \Big[ \xi_k^\transpose  \tilA_{k+1}^\transpose M  \tilA_{k+1} \xi_k \Big ]
	\\
	& \leq \sum_{k=m}^n \frac{1}{k^2} \Expect \Big[ \xi_k^\transpose  \tilA_{k+1}^\transpose M  \tilA_{k+1} \xi_k \Big ]
	\\
	& \leq \overline{\sigma}^2_A \sum_{k=m}^n \frac{1}{k^2}  \| \xi_k \|_M^{\star 2}
	\label{e:prop_2_Umn}
	\end{aligned}
	\end{equation}
where $\overline{\sigma}^2_A$ exists under the boundedness assumption on $\{A_n\}$.
	\ad{It would be nice to actually use these constants in our assumptions!}
	
	Using \eqref{e:prop_2_bmn} and \eqref{e:prop_2_Umn} in \eqref{e:xi_recursion_norm_An} gives:
	\begin{equation}
	\begin{aligned}
	\| \xi_{n+1} \|_M^\star
	& \leq
	\delta (1+2n^{-1} ) \cdot \|   \xi_n  \|_M^\star  +   \overline{\sigma}_\xi +\overline{\sigma}_A \Big(  \sum_{k=m}^n \frac{1}{k^2}  \| \xi_k \|_M^{\star2}\Big)^{\half}
	\\
	& \leq
	\delta (1+2n^{-1} ) \cdot \|   \bfmxi  \|_{M, n}   +   \overline{\sigma}_\xi + \overline{\sigma}_A   \Big( \sum_{k=m}^\infty \frac{1}{k^2} \Big)^{\half} \|   \bfmxi  \|_{M, n}
	\label{e:xi_recursion_norm_An_bound}
	\end{aligned}
	\end{equation}
	where for each $n$,
	\[
	\|  \bfmxi  \|_{M, n} \eqdef  \max_{1 \leq k \leq n} \| \xi_k \|_M  <\infty
	\]
	
Choose  $N_0\ge 2$ such that 
	\[
	\rho = ( 1+2N_0^{-1} ) \delta + \overline{\sigma}_A   \Big( \sum_{k=N_0}^\infty \frac{1}{k^2} \Big)^{\half} < 1,
	\]
and fix $m = N_0$.  
We then  conclude from 	\eqref{e:xi_recursion_norm_An_bound}
 that for each $n \geq N_0$,
	\begin{equation*}
	\begin{aligned}
	\| \xi_{n+1} \| 
	& \leq
	\rho\cdot \|  \bfmxi  \|_{M, n}   +  \overline{\sigma}_\xi
	\label{e:xi_recursion_norm_2_An}
	\end{aligned}
	\end{equation*}
Next apply the definition $\|
 \bfmxi  \|_{M, n+1} = \max \{ \| \xi_{n+1} \| \,,  \|  \bfmxi  \|_{M, n}  \} $,  to obtain
 \begin{equation}
	\|  \bfmxi  \|_{M, n+1} \leq  \max \{ \rho\cdot \|  \bfmxi  \|_{M, n}   + \overline{\sigma}_\xi  \,,  \|  \bfmxi  \|_{M, m}  \} 
	\,,  	\quad n\ge m= N_0
\label{e:xi_recursion_norm_3_An}
	\end{equation}
It then follows by induction that 
\[
	\|  \bfmxi  \|_{M, n} \leq  \max \{     \overline{\sigma}_\xi /(1-\rho)  , \|  \bfmxi  \|_{M, m}   \}  \,, \quad  n\ge N_0\, , 
\]
	which implies \eqref{e:couple}. \qed
%

\section{Variance analysis of the NeSA algorithm}
\label{s:VarNeSA}

Throughout this section it is assumed that the assumptions of \Section{t:OptVarZapHBAn} hold (repeated here for convenience of the reader):

\begin{romannum}
\item[\textbf{(N1)}]  $\{\tilA_n, \tilb_n\}$ are bounded martingale difference sequences.  Moreover, for any 
matrix $Q$,
\[
\Expect [(I+A_n) Q (I+A_n)^{\transpose}\mid \clF_{n-1}] = \clL(Q) 
\]
where $\clF$ is the natural filtration:

\centerline{
$
\clF_n = \sigma\{A_k,b_k : k\le n\}.  
$} 

\item[\textbf{(N2)}]     The bounds in \eqref{e:Aconds} hold, and  the linear operator $\clL$ has  spectral radius strictly bounded by unity.

\item[\textbf{(N3)}] The covariance sequence $\{\Sigma_n\}$ defined in \eqref{e:SigmanAnBlockForm}  is bounded.

\end{romannum}

\spm{Let's drop this for now, but keep discussion that is there. And "initially impose assumption" is strange, given it is assumed in proposition
\\
We will initially impose assumption~(N3) to simplify the proof of convergence. 
Following this proof, we explain how the same arguments leading to convergence can be elaborated to establish boundedness.}
\ad{\rd{Need to direct them to specific subsection where these arguments are made}}

Even under the stability assumption, the convergence proof appears complex.   We first provide a proof for the simpler PolSA algorithm \eqref{e:PolSA_simple} for which (N3) holds by applying \Prop{t:snr}.

%
%
%
%


\subsection{Variance analysis of PolSA}
\label{s:VarAnaZapHB}

The recursion \eqref{e:PolSA_simple} is expressed in state space form as follows:
\begin{gather}
\begin{bmatrix} 
\tiltheta_{n+1} \\ \Delta \theta_{n+1}
\end{bmatrix}
=
\begin{bmatrix}
I & (I + A) \\
0 & (I + A) 
\end{bmatrix}
\begin{bmatrix} 
\tiltheta_n \\ \Delta \theta_n
\end{bmatrix}
+
\alpha_{n+1}
\left[
\begin{bmatrix}
A & 0 \\
A & 0
\end{bmatrix}
\begin{bmatrix}
\tiltheta_n \\
\Delta \theta_n
\end{bmatrix}
+
\begin{bmatrix}
\Noise_{n+1} \\
\Noise_{n+1}
\end{bmatrix}
\right]
\label{e:TilThetaDeltaTheta}
\end{gather}

Recall that the $2d$-dimensional vector process $\{\Phi_n \}$ is defined as:
\begin{equation}
\Phi_n \eqdef  \begin{pmatrix}
\sqrt{n} \tiltheta_n 
\\  
n \Delta \theta_n
\end{pmatrix}
\end{equation}
and the covariance matrix sequence $\{\Sigma_n\}$ is defined to be:
\begin{equation}
\Sigma_n \eqdef \Expect  [  \Phi_n \Phi_n^\transpose]
=
\begin{bmatrix}
\Sigma_n^{11} & \Sigma_n^{12}
\\
\Sigma_n^{21} & \Sigma_n^{22}
\end{bmatrix}
=
\begin{bmatrix}
n \Expect[\tiltheta_n \tiltheta_n^\transpose] & n^{3/2} \Expect[\tiltheta_n \Delta \theta_n^\transpose]
\\
n^{3/2} \Expect[\Delta \theta_n \tiltheta_n^\transpose ] & n^2 \Sigma_n^{22}
\end{bmatrix}
\end{equation}
Also define the noise sequence $\{\Delta_n^{\phi} \}$:
\begin{equation}
\Noise^\Phi_n \eqdef  \begin{pmatrix}
\Noise_n
\\  
\sqrt{n}  \Noise_n
\end{pmatrix} 
\end{equation}

We begin by establishing Assumption~(N3) for PolSA. The following is a direct corollary to \Prop{t:snr} and \Prop{t:couple}:

\begin{proposition}
\label{t:polsa_bdd_var}
Suppose that the assumptions of \Prop{t:couple} hold.   Then, each $\{\Sigma_n^{11}\}$, $\{\Sigma_n^{12}\}$, $\{\Sigma_n^{21}\}$ and $\{\Sigma_n^{22}\}$ are bounded sequences for the PolSA algorithm:  
\[
\sup_n \trace(\Sigma^{ij}_n )  <\infty , \qquad \text{for all }\,\, 1 \leq i,j \leq 2
\]
\end{proposition}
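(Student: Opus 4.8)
The plan is to reduce the claim to a uniform bound on the single scalar quantity $\trace(\Sigma_n)=\Expect[\|\Phi_n\|^2]$ and then read off everything else from the two preceding results. Since $\Sigma_n=\Expect[\Phi_n\Phi_n^\transpose]$ is positive semidefinite, each entry obeys $|(\Sigma_n)_{ij}|\le\sqrt{(\Sigma_n)_{ii}(\Sigma_n)_{jj}}\le\trace(\Sigma_n)$, so a bound on $\trace(\Sigma_n)$ bounds every entry of $\Sigma_n$, hence $\trace(\Sigma_n^{ij})$ for all $1\le i,j\le 2$; in particular the cross blocks $\Sigma_n^{12},\Sigma_n^{21}$ need no separate treatment. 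Because $\trace(\Sigma_n)=\trace(\Sigma_n^{11})+\trace(\Sigma_n^{22})=n\Expect[\|\tiltheta_n\|^2]+n^2\Expect[\|\Delta\theta_n\|^2]$, it suffices to bound these two terms uniformly in $n$.

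First I would bring in the coupling error $\bartheta_n\eqdef\theta_n-\theta_n^*$ of \eqref{e:bartheta_and_xi} and decompose the PolSA error as the SNR error plus this coupling error, $\tiltheta_n=\tiltheta_n^*+\bartheta_n$ and correspondingly $\Delta\theta_n=\Delta\theta_n^*+\Delta\bartheta_n$. Applying $\|a+b\|^2\le 2\|a\|^2+2\|b\|^2$ inside the expectation, the $(1,1)$ term splits as $n\Expect[\|\tiltheta_n\|^2]\le 2n\Expect[\|\tiltheta_n^*\|^2]+2n\Expect[\|\bartheta_n\|^2]$. The first summand is bounded by \Prop{t:snr}, since $n\Expect[\tiltheta_n^*(\tiltheta_n^*)^\transpose]\to\Sigma^*$ and a convergent sequence is bounded; the second obeys $n\Expect[\|\bartheta_n\|^2]\le n^{-1}\sup_k k^2\Expect[\|\bartheta_k\|^2]$, which is finite (indeed $O(1/n)$) by the coupling bound \eqref{e:couple} of \Prop{t:couple}.

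For the $(2,2)$ term I would argue the same way: $n^2\Expect[\|\Delta\theta_n\|^2]\le 2n^2\Expect[\|\Delta\theta_n^*\|^2]+2n^2\Expect[\|\Delta\bartheta_n\|^2]$. The first summand is bounded because $n^2\Expect[\Delta\theta_n^*(\Delta\theta_n^*)^\transpose]\to\Sigma^*$, again by \Prop{t:snr}. For the second I write $\Delta\bartheta_n=\bartheta_n-\bartheta_{n-1}$ and bound $n^2\Expect[\|\Delta\bartheta_n\|^2]\le 2n^2\Expect[\|\bartheta_n\|^2]+2\tfrac{n^2}{(n-1)^2}(n-1)^2\Expect[\|\bartheta_{n-1}\|^2]$; both pieces are controlled by $\sup_k k^2\Expect[\|\bartheta_k\|^2]<\infty$ from \eqref{e:couple}, using that $n^2/(n-1)^2$ is bounded for $n\ge 2$ (the finitely many initial terms are harmless since $\bartheta_0=0$ under identical initial conditions). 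Combining the two displays bounds $\trace(\Sigma_n)$ uniformly, which by the first paragraph completes the proof.

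The hard part here is essentially nonexistent, which is exactly why the statement is a corollary: the analytic work is already carried out in \Prop{t:snr}, giving boundedness of the SNR covariances, and in \Prop{t:couple}, giving the $n^{-2}$ rate of the coupling error. The only points requiring care are the bookkeeping of the $\sqrt n$ and $n$ normalizations built into $\Phi_n$, and the observation that positive semidefiniteness of $\Sigma_n$ dispenses with any direct estimation of the off-diagonal blocks $\Sigma_n^{12},\Sigma_n^{21}$.
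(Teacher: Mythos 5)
Your proposal is correct and takes essentially the same route as the paper's own proof: the paper likewise writes the PolSA error as the SNR error plus the coupling error, bounds $\trace(\Sigma_n^{11})$ and $\trace(\Sigma_n^{22})$ via $\|a+b\|^2\le 2\|a\|^2+2\|b\|^2$ combined with \Prop{t:snr} and the coupling bound \eqref{e:couple} of \Prop{t:couple}, and handles $\Sigma_n^{12}$, $\Sigma_n^{21}$ by Cauchy--Schwarz. Your added bookkeeping (the reduction to $\trace(\Sigma_n)$ and the explicit $n^2/(n-1)^2$ factor) merely makes explicit what the paper leaves implicit.
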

\begin{proof}
\Prop{t:couple} implies that  $\{
n^2 \Expect[\| \tiltheta_n^* - \tiltheta_n \|^2 ] \}$ is bounded in $n$,  
where $\tiltheta_n^*$ is the error sequence of the SNR algorithm, and $\tiltheta_n$ is the error sequence of the PolSA algorithm.
We then have:
\[
\begin{aligned}
\trace(\Sigma_n^{11}) & =   n \| \tiltheta_n\|^2 
\\
&=  n \| \tiltheta_n - \tiltheta_n^*  + \tiltheta_n^* \|^2 
\\
&\leq  2 n \| \tiltheta_n - \tiltheta_n^* \|^2  + 2 n \| \tiltheta_n^* \|^2 
\end{aligned}
\]
Since each of the two terms on the right hand side is bounded (by \Prop{t:couple} and \Prop{t:snr}), we have boundedness of Trace($\Sigma_n^{11}$) for all $n \geq 0$.

Next consider $\{\Sigma^{22}_n\}$. We have:
\[
\begin{aligned}
\trace (\Sigma^{22}_n) 
&=  n^2 \| \Delta \theta_n \|^2
\\
&\leq  2 n^2 \| \Delta \theta_n - \Delta \theta_n^* \|^2 + 2 n^2 \| \Delta \theta_n^* \|^2
\\
&\leq  4 n^2 \| \tiltheta_n - \tiltheta_n^* \|^2  + 4 n^2 \| \tiltheta_{n - 1} - \tiltheta_{n - 1}^* \|^2 + 2 n^2 \| \Delta \theta_n^* \|^2
\end{aligned}
\]
Once again, each of the three terms on the right hand side of the above inequality are bounded, uniformly in $n$, by \Prop{t:couple} and \Prop{t:snr}. The boundedness of ($\Sigma^{12}_n$) and ($\Sigma^{21}_n$) follow from the Cauchy-Schwarz inequality.
\end{proof}

We now proceed to establish the limit for $\{\Sigma_n^{11}\}$.
Using the Taylor series approximation:
\begin{equation}
\sqrt{n+1} = \sqrt{n} + \frac{\sqrt{n}}{2 n} + O\Big(\frac{1}{n^{3/2}}\Big),
\label{e:sqrtn}
\end{equation}
it follows from \eqref{e:TilThetaDeltaTheta} that
the process $\{\Phi_n \}$ evolves as 
\begin{equation}
\Phi_{n+1} = M \Phi_n + \alpha_{n+1} \bigl( B \Phi_n +  {(\sqrt{n+1})}\Noise_{n+1}^{\Phi} + \epsy^\Phi_n \bigr),
\label{e:Phin}
\end{equation}
where the $2d \times 2d$ matrices $M$ and $B$, and the $2d \times 1$ column vector $\epsy^\Phi_n$ are defined to be:
\begin{equation}
M 
\eqdef \begin{bmatrix}
I & \frac{(I + A)}{\sqrt{n}} \\ \\
\frac{ A}{ \sqrt{n}} &  I + A
\end{bmatrix}
\,,
\quad   
B 
\eqdef
\begin{bmatrix}
\half I + A & 0 \\ \\
0 & I + A
\end{bmatrix}
\,,
\quad   
\epsy^\Phi_n
\eqdef
\frac{1}{\sqrt{n}} 
\begin{bmatrix}
{ O(  \| \tiltheta_n \| + \|     \theta_n\|) }
\\
\\
0
\end{bmatrix}
\label{e:MBDef}
\end{equation} 
where the term $\epsy_n^\phi$ is due to the last term in \eqref{e:sqrtn}.

The main step in the proof of \Prop{t:OptVarZapHB} is
to obtain sharp results for  the off-diagonal blocks of the covariance matrix: $\{\Sigma^{12}_n\}$ and $\{\Sigma^{21}_n\}$.   The proof of the following lemma is  contained in Section~\ref{s:recursions}.

\begin{lemma}
\label{t:OffDiagSigma}
Under the conditions of \Prop{t:OptVarZapHB}, for each $n \geq 1$, the following approximations hold for $\Sigma_n$ and  the scaled covariance  $\psi_n \eqdef \sqrt{n}  \Sigma_n^{21}$:
\begin{eqnarray}
\Sigma_{n+1}^{22}   & = &   (I+A)  \Sigma_n^{22}  (I+A)^\transpose + \Sigma^{\Noise}
+ O(\alpha_{n+1}) \,,
\label{e:OffDiagSigma_and_Sigma11_sigma22}
\\
\psi_n   & = & -\Sigma_n^{11}  - A^{-1}  \Sigma_\infty^{22}    + o(1)\,,
\label{e:OffDiagSigma_and_Sigma11_psi}
\\
\Sigma_{n+1}^{11}  &=& \Sigma_n^{11}  + \alpha_{n+1} \Big(  \Sigma_n^{11}   + A \Sigma_n^{11} 
+ \Sigma_n^{11}A^\transpose + \psi_n ^\transpose  (I +A)^\transpose + {(I+A) \psi_n }  
\label{e:OffDiagSigma_and_Sigma11_sigma11}
\\
\nonumber & & \qquad\qquad\qquad\qquad\qquad
+  (I+A) \Sigma_\infty^{22}  (I+A)^\transpose + \Sigma^{\Noise}  + o(1)\Big)
\end{eqnarray}

\end{lemma}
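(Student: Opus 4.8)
The plan is to propagate the second-moment matrix $\Sigma_n = \Expect[\Phi_n\Phi_n^\transpose]$ block by block, reading the recursion off the linear system \eqref{e:Phin} for $\Phi_n = (\sqrt{n}\tiltheta_n, n\Delta\theta_n)^\transpose$. Write the one-step map as $F_n \eqdef M + \alpha_{n+1}B$, with $M,B$ as in \eqref{e:MBDef}, and write $F^{ij}$ for its blocks. Under (N1) the increment $\Noise_{n+1}^{\Phi}$ is a martingale difference, orthogonal to the $\clF_n$-measurable pair $(\Phi_n,\epsy_n^{\Phi})$, so all cross terms vanish and
\[
\Sigma_{n+1} = F_n \Sigma_n F_n^\transpose + \Expect[g_{n+1}g_{n+1}^\transpose] + E_n,
\]
where $g_{n+1} \eqdef \alpha_{n+1}\sqrt{n+1}\,\Noise_{n+1}^{\Phi} = (\frac{1}{\sqrt{n+1}}\Noise_{n+1},\, \Noise_{n+1})^\transpose$ and $E_n$ collects the contributions of the deterministic remainder $\epsy_n^{\Phi}$. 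Using $\Expect[\Noise_{n+1}\Noise_{n+1}^\transpose] = \Sigma^{\Noise}+o(1)$ (the $L_2$ consistency behind \eqref{e:sigma_delta_n}), the additive-noise block has the staggered orders $\alpha_{n+1}\Sigma^{\Noise}$, $\sqrt{\alpha_{n+1}}\Sigma^{\Noise}$, $\Sigma^{\Noise}$ in the $(1,1)$, $(1,2)/(2,1)$, $(2,2)$ positions; since $\epsy_n^{\Phi}=O(n^{-1})$ in $L_2$, the term $E_n$ is negligible at each scaling below. Throughout I would invoke \Prop{t:polsa_bdd_var} to keep every block $\Sigma^{ij}_n$ bounded so that the $O(\alpha_{n+1})$ and $o(1)$ remainders are legitimate.

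For the first claim I would extract the $(2,2)$ block. With $F^{22}=(1+\alpha_{n+1})(I+A)$ and $F^{21}=A/\sqrt{n}$, the dominant contribution is $(I+A)\Sigma_n^{22}(I+A)^\transpose$, the mixed terms carry extra factors $1/\sqrt{n}$, and the noise contributes $\Sigma^{\Noise}$; collecting $O(\alpha_{n+1})$ remainders gives \eqref{e:OffDiagSigma_and_Sigma11_sigma22}. Since the map $Q\mapsto(I+A)Q(I+A)^\transpose$ is a contraction under (N2), this recursion forces $\Sigma_n^{22}\to\Sigma_\infty^{22}$, the unique solution of \eqref{e:SigLim22}; in particular $(I+A)\Sigma_n^{22}(I+A)^\transpose+\Sigma^{\Noise}=\Sigma_\infty^{22}+o(1)$, which I reuse twice.

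The heart of the argument is the $(2,1)$ block under the rescaling $\psi_n=\sqrt{n}\,\Sigma_n^{21}$. Expanding $[F_n\Sigma_nF_n^\transpose]^{21}$ and keeping the leading $1/\sqrt{n}$ terms gives $\Sigma_{n+1}^{21} = (I+A)\Sigma_n^{21} + \frac{1}{\sqrt{n}}\big[A\Sigma_n^{11} + (I+A)\Sigma_n^{22}(I+A)^\transpose + \Sigma^{\Noise}\big] + o(n^{-1/2})$. Multiplying by $\sqrt{n+1}$, substituting the limit from the previous step, and using $\sqrt{n+1}/\sqrt{n}=1+o(1)$, I arrive at the contractive recursion
\[
\psi_{n+1} = (I+A)\psi_n + A\Sigma_n^{11} + \Sigma_\infty^{22} + o(1).
\]
Its quasi-static fixed point is $\widehat\psi_n \eqdef -A^{-1}(A\Sigma_n^{11}+\Sigma_\infty^{22}) = -\Sigma_n^{11}-A^{-1}\Sigma_\infty^{22}$, the cancellations $(I+A)(-\Sigma_n^{11})+A\Sigma_n^{11}=-\Sigma_n^{11}$ and $(I+A)(-A^{-1}\Sigma_\infty^{22})+\Sigma_\infty^{22}=-A^{-1}\Sigma_\infty^{22}$ being exactly what makes \eqref{e:OffDiagSigma_and_Sigma11_psi} come out clean. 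Setting $e_n\eqdef\psi_n-\widehat\psi_n$ and subtracting gives $e_{n+1}=(I+A)e_n+(\Sigma_{n+1}^{11}-\Sigma_n^{11})+o(1)$; since $\Sigma_n^{11}$ varies slowly ($\Sigma_{n+1}^{11}-\Sigma_n^{11}=O(\alpha_{n+1})$ by the third recursion, whose increment involves only bounded quantities) and $I+A$ is Schur-stable, $e_n\to0$, establishing \eqref{e:OffDiagSigma_and_Sigma11_psi}. Finally, for the $(1,1)$ block I use $F^{11}=I+\alpha_{n+1}(\half I+A)$ and $F^{12}=(I+A)/\sqrt{n}$: the self-term yields $\Sigma_n^{11}+\alpha_{n+1}(\Sigma_n^{11}+A\Sigma_n^{11}+\Sigma_n^{11}A^\transpose)$, the off-diagonal couplings rewritten via $\Sigma_n^{12}=\psi_n^\transpose/\sqrt{n}$, $\Sigma_n^{21}=\psi_n/\sqrt{n}$ contribute $\alpha_{n+1}(\psi_n^\transpose(I+A)^\transpose+(I+A)\psi_n)$, the block $F^{12}\Sigma_n^{22}(F^{12})^\transpose$ contributes $\alpha_{n+1}(I+A)\Sigma_\infty^{22}(I+A)^\transpose+o(\alpha_{n+1})$, and the noise contributes $\alpha_{n+1}\Sigma^{\Noise}$, which together give \eqref{e:OffDiagSigma_and_Sigma11_sigma11}. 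The main obstacle is the off-diagonal step: fixing the exact $\sqrt{n}$-scaling, justifying the quasi-static tracking, and verifying that $E_n$ and the fractional-order noise cross-term are genuinely negligible at each scaling — all of which rest on \Prop{t:polsa_bdd_var} and the $L_2$ consistency $\tiltheta_n\to0$.
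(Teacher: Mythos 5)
Your proposal is correct and follows essentially the paper's own route: block-by-block propagation of $\Sigma_n$ from the state-space recursion \eqref{e:Phin} (yielding the interdependent block recursions \eqref{e:MatrixRecursions}), with \Prop{t:polsa_bdd_var} controlling all remainders, Lyapunov convergence of $\Sigma_n^{22}$ under \eqref{e:Aconds}, the rescaled $(2,1)$ block $\psi_n$ tracking the quasi-static fixed point $-\Sigma_n^{11}-A^{-1}\Sigma_\infty^{22}$, and back-substitution into the $(1,1)$ recursion. The only real deviation is the tracking step: where the paper expands $\psi^\circ_{n+k}=(I+A)^k\psi^\circ_n+\sum_{j=0}^{k-1}(I+A)^j u(n+k-1-j)$ by superposition and bounds the error through the slow-variation estimate $\|\Sigma^{11}_{n+j}-\Sigma^{11}_n\|\le c_0\ln\bigl((n+j+1)/n\bigr)$, you subtract the frozen fixed point and contract the error via $e_{n+1}=(I+A)e_n+(\Sigma^{11}_{n+1}-\Sigma^{11}_n)+o(1)$ --- the same slow-variation fact packaged as a perturbed contraction; just note that the increment bound $\Sigma^{11}_{n+1}-\Sigma^{11}_n=O(\alpha_{n+1})$ must be read off the raw $(1,1)$ recursion in \eqref{e:MatrixRecursions} (with $\psi_n$ bounded via Cauchy--Schwarz), not off the lemma's own conclusion \eqref{e:OffDiagSigma_and_Sigma11_sigma11}, to avoid circularity.
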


\subparagraph{Proof of \Prop{t:OptVarZapHB}}

From Assumption~(A3), which requires that the eigenvalues of matrix $(I+A)$ lie within the open unit disc, it follows that \eqref{e:OffDiagSigma_and_Sigma11_sigma22} can be approximated by a geometrically stable discrete-time Lyapunov recursion, with a time-invariant, bounded input $\Sigma^{\Noise}$ \cite{kai80lin}.
The limit \eqref{e:SigLim22} directly follows:
\[
\Sigma_\infty^{22} = (I+A) \Sigma_\infty^{22} (I+A)^\transpose + \Sigma^{\Noise}
\]
 
Substituting the approximation \eqref{e:OffDiagSigma_and_Sigma11_psi}
 for $\psi_n$  into the right hand side of the   recursion in \eqref{e:OffDiagSigma_and_Sigma11_sigma11}  gives  (after simplification), 
\begin{equation}
\Sigma_{n+1}^{11}  = \Sigma_n^{11}  + \alpha_{n+1} \Big( - \Sigma_n^{11} + A^{-1} \Sigma^{\Noise} (A^{-1})^\transpose  + o(1)\Big)
\label{e:Sigma11SArecursion}
\end{equation}
This can be regarded as a Euler approximation to the ODE (\cite{bor00a}):
\[
\ddt x_t = -x_t + A^{-1} \Sigma^{\Noise} (A^{-1})^\transpose
\]
The   limits of $\{\Sigma_n^{11} \}$ and $\{ x_t\}$ coincide with the stationary point  $x^* =  A^{-1} \Sigma^{\Noise} (A^{-1})^\transpose$.
\qed

\subsection{Recursion approximations for PolSA}
\label{s:recursions}

The proof of Lemmas~\ref{t:OffDiagSigmaAn}, \ref{t:SAforCovarAn} and \ref{t:OffDiagSigma} are provided here.  These results are established using the result of \Prop{t:polsa_bdd_var}: the covariance sequence $\{\Sigma_n\}$ is bounded for the PolSA algorithm. 


Recall the definitions of $\Sigma_n^\Delta$ and $\Sigma^\Delta$ in \eqref{e:sigma_delta} and \eqref{e:sigma_delta_n}. We begin with the following consequence of the definitions:
\begin{lemma}
\label{t:SigmaNoise}
Under Assumption (N1), the covariance $\Sigma^\Noise_n$ satisfies:
\begin{equation}
\Sigma^\Noise_n = \Sigma^\Noise + O \big(n^{-1/2} \sqrt{\trace({\Sigma^{11}_n})} \big) 
\label{e:SigmaNoiseApprox_old}
\end{equation}
\qed
\end{lemma}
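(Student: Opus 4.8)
The plan is to expand the noise using the decomposition \eqref{e:SigmaNoise}, writing $\Noise_n = \tilA_n\tiltheta_{n-1} + \Noise_n^*$, and then to track how the term involving the parameter error $\tiltheta_{n-1}$ perturbs the covariance away from its stationary value. Substituting this into the definition \eqref{e:sigma_delta_n} of $\Sigma^\Noise_n = \Expect[\Noise_n\Noise_n^\transpose]$ yields four terms:
\[
\Sigma^\Noise_n = \Sigma^\Noise + \Expect[\tilA_n\tiltheta_{n-1}\tiltheta_{n-1}^\transpose\tilA_n^\transpose] + \Expect[\tilA_n\tiltheta_{n-1}(\Noise_n^*)^\transpose] + \Expect[\Noise_n^*\tiltheta_{n-1}^\transpose\tilA_n^\transpose],
\]
the leading term being $\Expect[\Noise_n^*(\Noise_n^*)^\transpose] = \Sigma^\Noise$ from \eqref{e:sigma_delta}. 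It then suffices to bound the remaining quadratic and (mutually transposed) cross terms. Note that the cross terms do \emph{not} vanish by the martingale-difference property of (N1): since $\Noise_n^* = \tilA_n\theta^* - \tilb_n$ is built from the same randomness $(A_n,b_n)$ as $\tilA_n$, the two are correlated and must be estimated rather than eliminated.

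For the cross terms, which carry the leading error, I would use only boundedness of the martingale-difference sequences from (N1): there exist constants $\overline{\sigma}_A,\overline{\sigma}_\Delta<\infty$ with $\|\tilA_n\|\le\overline{\sigma}_A$ and $\|\Noise_n^*\|\le\overline{\sigma}_\Delta$ almost surely. Submultiplicativity of the spectral norm gives
\[
\big\| \Expect[\tilA_n\tiltheta_{n-1}(\Noise_n^*)^\transpose] \big\| \le \overline{\sigma}_A\,\overline{\sigma}_\Delta\, \Expect[\|\tiltheta_{n-1}\|].
\]
By Cauchy--Schwarz $\Expect[\|\tiltheta_{n-1}\|]\le\sqrt{\Expect[\|\tiltheta_{n-1}\|^2]}$, and by the definition $\Sigma^{11}_{n-1}=(n-1)\Expect[\tiltheta_{n-1}\tiltheta_{n-1}^\transpose]$ one has $\Expect[\|\tiltheta_{n-1}\|^2]=\trace(\Sigma^{11}_{n-1})/(n-1)$, so the cross term is $O\big((n-1)^{-1/2}\sqrt{\trace(\Sigma^{11}_{n-1})}\big)$. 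Since $\{\Sigma^{11}_n\}$ is bounded and slowly varying (\Prop{t:polsa_bdd_var}), the index may be shifted from $n-1$ to $n$ at the cost of an absorbed constant, yielding the stated $O\big(n^{-1/2}\sqrt{\trace(\Sigma^{11}_n)}\big)$.

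The quadratic term is handled the same way: $\|\Expect[\tilA_n\tiltheta_{n-1}\tiltheta_{n-1}^\transpose\tilA_n^\transpose]\| \le \overline{\sigma}_A^2\,\Expect[\|\tiltheta_{n-1}\|^2] = \overline{\sigma}_A^2\,\trace(\Sigma^{11}_{n-1})/(n-1)$, which is $O\big(n^{-1}\trace(\Sigma^{11}_n)\big)$. I expect the only genuine obstacle to be confirming that this quadratic contribution is dominated by the cross-term bound, i.e. that $n^{-1}\trace(\Sigma^{11}_n) = O\big(n^{-1/2}\sqrt{\trace(\Sigma^{11}_n)}\big)$, equivalently that $\sqrt{\trace(\Sigma^{11}_n)/n} = \sqrt{\Expect[\|\tiltheta_n\|^2]}$ stays bounded. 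This holds precisely because $\{\Sigma^{11}_n\}$ is bounded by \Prop{t:polsa_bdd_var} (indeed $\tiltheta_n\to 0$ in $L_2$, so $\Expect[\|\tiltheta_n\|^2]$ vanishes). The quadratic term is therefore $o\big(n^{-1/2}\sqrt{\trace(\Sigma^{11}_n)}\big)$, is absorbed into the error, and the claimed estimate \eqref{e:SigmaNoiseApprox_old} follows.
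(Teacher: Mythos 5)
Your proof is correct and is essentially the argument the paper intends: the lemma is presented there as a direct ``consequence of the definitions'' with no written proof, and your expansion of $\Sigma^\Noise_n$ via the decomposition $\Noise_n=\tilA_n\tiltheta_{n-1}+\Noise_n^*$ --- correctly observing that the cross terms are \emph{not} killed by the martingale-difference property and must instead be bounded via Cauchy--Schwarz, with the quadratic term absorbed --- is the natural fleshing-out of exactly that computation. Your one extra ingredient, invoking \Prop{t:polsa_bdd_var} to absorb the $O\big(n^{-1}\trace(\Sigma^{11}_n)\big)$ term and to shift the index from $n-1$ to $n$, is consistent with how the paper itself deploys the lemma, since it applies that same proposition immediately afterward to conclude $\Sigma^\Noise_n=\Sigma^\Noise+O(n^{-1/2})$.
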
 
From \Prop{t:polsa_bdd_var}, we have:
\begin{equation}
\Sigma^\Noise_n = \Sigma^\Noise + O (n^{- 1/2})
\label{e:SigmaNoiseApprox}
\end{equation}

\paragraph{Bounds for PolSA}
\ad{\rd{We are ignoring $\epsy^\Phi_n$ here!! :(}}
From equations \eqref{e:Phin} and \eqref{e:SigmanAnBlockForm}, {ignoring terms that are of the order $O(1/n^{3/2})$}, the $2d \times 2d$ matrix sequence $\{\Sigma_n\}$ satisfies:
\begin{equation}
\Sigma_{n+1} = M \Sigma_n M^\transpose + \alpha_{n+1} (B \Sigma_n  M^\transpose + M \Sigma_n B^\transpose + {\Sigma^{\Noise_\Phi}_{n+1}}),
\label{e:BigCovRec}
\end{equation}
where $M$ and $B$ are defined in \eqref{e:MBDef}, and ${\Sigma^{\Noise_\Phi}_n} = \Expect [\Delta_n^\Phi (\Delta_n^\Phi)^\transpose ]$ is also a $2d \times 2d$ matrix:
\begin{equation}
{\Sigma^{\Noise_\Phi}_n}
\eqdef
\begin{bmatrix}
\Sigma^{\Noise}_n & \sqrt{n} \Sigma^{\Noise}_n \\ \\
\sqrt{n} \Sigma^{\Noise}_n & n \Sigma^{\Noise}_n
\end{bmatrix}
\label{e:SigmaNoiseDef}
\end{equation}
with $\Sigma^\Noise_n$ defined in \eqref{e:sigma_delta_n}. 
Based on \eqref{e:SigmanAnBlockForm}, it is simpler to view \eqref{e:BigCovRec} as four  
parallel {interdependent} matrix recursions:
\ad{\bl{You're right: replaced $o(1)$ with $O(1)$ in the $\Sigma_n^{22}$ recursion. Does this change things?!
\\
And for $\Sigma_n^{12}$ is it $o(1)$; It changes when we use multiply it by $\sqrt{n}$!}
\\
We need to sort this out.}
\begin{equation}
\begin{aligned}
\Sigma_{n+1}^{11}   = & \Sigma_n^{11}  + \frac{1}{\sqrt{n}}  \Big( {\Sigma_n^{12} (I+A)^\transpose} + {(I+A) \Sigma_n^{21} } \Big)  
\\
& 
+ \alpha_{n+1} \Big( (\half I + A) \Sigma_n^{11} + \Sigma_n^{11} (\half I + A)^\transpose + (I+A) \Sigma_n^{22} (I+A)^\transpose +  \Sigma^{\Noise} + \epsy_n^{11} \Big)
\\
\vspace{0.5in}
\\
\Sigma_{n+1}^{12}  = &\Sigma_n^{12} (I+A)^\transpose 
\\
&  + \frac{1}{n} (I+A) \Sigma_n^{21} A^\transpose 
\\
&  + \frac{1}{\sqrt{n} } \Bigl( \Sigma_n^{11} A^\transpose  +  (I+A) \Sigma_n^{22} (I+A)^\transpose + \Sigma^{\Noise}\Bigr) 
\\
&
+ \alpha_{n+1} \Bigg( \frac{(\half I + A) \Sigma_n^{11} A^\transpose}{\sqrt{n}} 
+ \Big( \frac{3}{2} I + A \Big)\Sigma_n^{12} (I + A)^\transpose
+ \frac{(I+A) \Sigma_n^{22} (I + A)^\transpose}{\sqrt{n}}  +  \epsy_n^{12} \Bigg)
\\
\vspace{0.5in}
\\
\Sigma_{n+1}^{22}   =  & (I+A) \Sigma_n^{22} (I+A)^\transpose +  \Sigma^\Noise
+
\frac{1}{n } A \Sigma_n^{11} A^\transpose   +  \frac{1}{\sqrt{n} }  \Bigl( A \Sigma_n^{12} (I+A)^\transpose + (I+A) \Sigma_n^{21} A^\transpose \Bigr)
\\
&
+ \alpha_{n+1} \Bigg( \frac{A \Sigma_n^{12} (I+A)^\transpose}{\sqrt{n}} 
+
\frac{(I+A)\Sigma_n^{21} A^\transpose}{\sqrt{n}}
+ 2 {(I+A) \Sigma_n^{22} (I + A)^\transpose}  +  \epsy_n^{22} \Bigg)
\end{aligned}
\label{e:MatrixRecursions}
\end{equation}
in which the error terms satisfy the following: 
\begin{equation}
\epsy_n^{11} = O(n^{-1/2} \sqrt{\trace{\Sigma^{11}_n}}) \,,\quad
\epsy_n^{12} = O(n^{-1/2} \trace{\Sigma^{11}_n}) \,,\quad
\epsy_n^{22} = O(\trace{\Sigma_n^{11}})
\label{e:epsyn_trace}
\end{equation}
Once again, applying \Prop{t:polsa_bdd_var}, we have:
\begin{equation}
\epsy_n^{11} = O(n^{-1/2}) \,,\quad
\epsy_n^{12} = O(n^{-1/2}) \,,\quad
\epsy_n^{22} = O(1)
\label{e:epsyn}
\end{equation}
\spm{AD writes:  \bl{   Should cite the boundedness theorem here}
\\
SPM:  Let's discuss}
\ad{\rd{NEED TO DEFINE $||| \cdot |||$? AND REPLACE TRACE??}}

\subparagraph{Proof of \Lemma{t:OffDiagSigma}}
The first approximation \eqref{e:OffDiagSigma_and_Sigma11_sigma22}   follows from the recursion for $\Sigma^{22}_n$ in \eqref{e:MatrixRecursions}, and the fact that $\epsy_n^{22} = O(1)$ using \eqref{e:epsyn}.  The stability condition  \eqref{e:Aconds} implies that this sequence is convergent, and the limit solves the Lyapunov equation  
\[
\Sigma_\infty^{22} = (I+A) \Sigma_\infty^{22} (I+A)^\transpose + \Sigma^{\Noise}
\]


We next prove that \eqref{e:OffDiagSigma_and_Sigma11_psi} holds.
Multiplying both sides of the recursion for $\Sigma_n^{21}$ in \eqref{e:MatrixRecursions} by $\sqrt{n+1}$, and using the Taylor series approximation \eqref{e:sqrtn}, we obtain:
\begin{equation}
\begin{aligned}
\psi_{n+1} &  = A \Sigma^{11}_n + (I+A) \psi_n + {(I+A) \Sigma^{22}_n (I+A)^\transpose } + \Sigma^{\Noise} + O(\alpha_{n+1}(1+\|\psi_n\|))
\end{aligned}
\label{e:psin}
\end{equation}
where we have used \Prop{t:polsa_bdd_var} which establishes the boudedness of $\{\Sigma_n\}$.
Notice that the last term in \eqref{e:psin} can be written as:
\[
\begin{aligned}
O(\alpha_{n+1}(1+\|\psi_n\|)) &=  O(\alpha_{n+1}(1+\sqrt{n}  \| \Sigma^{21}_n \| ))
\\
&
=  O(n^{-1/2})
\end{aligned}
\]
where we have once again applied \Prop{t:polsa_bdd_var}, and used $\alpha_{n+1} = 1/(n+1)$.

Substituting the approximation for $\Sigma^{22}_n$ in \eqref{e:OffDiagSigma_and_Sigma11_sigma22} into the recursion \eqref{e:psin} gives
\[
\begin{aligned}
\psi_{n+1}  & =  (I+A) \psi_n + {(I+A) \Sigma^{22}_\infty (I+A)^\transpose } + \Sigma^{\Noise} + O(n^{-1/2})
\\
& = A \Sigma^{11}_n + (I+A) \psi_n +  \Sigma^{22}_\infty + O(n^{-1/2})
\end{aligned}
\] 
Using the principle of super-position for linear systems, we can represent the sequence $\{\psi_n\}$ as the sum of two terms,  with one equal to the ``$O(n^{-1/2})$'' error sequence, and the other evolving as follows:   
\begin{equation}
\begin{aligned}
\psi^\circ_{n+1} & = (I+A) \psi^\circ_n  + u(n)  
\\
u(n) & = A \Sigma^{11}_n + \Sigma^{22}_\infty
\end{aligned}
\label{e:psi3un}
\end{equation}
or equivalently,  
\[
\psi^\circ_{n+k} = (I+A)^k \psi^\circ_n + \sum_{j=0}^{k-1} (I+A)^j u(n+k-1-j)  
\]
The next step is to replace $u(n+k-1-j)$ with $u(n+k)$, and bound the error $\{\epsy(n,k)\}$: 
\begin{equation}
\psi^\circ_{n+k} = (I+A)^k \psi^\circ_n + \sum_{j=0}^{k-1} (I+A)^j u(n+k) + \epsy(n,k)  \,,
\label{e:psi3error}
\end{equation}
where:
\[
\epsy(n,k) = \sum_{j=0}^{k-1} (I+A)^j \big ( u(n+k-1-j) - u(n+k) \big)
\]

From the recursion for $\Sigma^{11}_n$ in \eqref{e:MatrixRecursions},
it follows that for some constant $c_0 < \infty$:
\[
\| \Sigma^{11}_{n+j} - \Sigma^{11}_n \| \leq c_0 \ln \bigg(\frac{n+j+1}{n}\bigg)
\]
Using the bound $\log(1+x)\le x$, it
 follows that for $c < \infty$, the input sequence $\{u(n)\}$ satisfies:
\begin{equation}
\| u(n + j)  -  u(n) \| \leq c \bigg(\frac{j+1}{n}\bigg), \qquad j \geq 0, \quad n \geq 1
\label{e:u_n_bd}
\end{equation}
Using this in the expression for $\epsy(n,k)$, we obtain,
\[
\begin{aligned}
\|\epsy(n,k)\| & \leq  \sum_{j=0}^{k-1} \big \| (I+A)^j u(n+k-1-j) - u(n+k) \big \|
\\
& \leq c  \sum_{j=0}^{k-1} \bigg \| (I+A)^j \bigg( \frac{j+2}{n+k-1-j} \bigg) \bigg \|
\\
& \leq \frac{c}{n}  \sum_{j=0}^{k-1}   \|(I+A)^j\| (j+2)
\\
& \leq \frac{c}{n}  \sum_{j=0}^{\infty}   \|(I+A)^j\|(j+2)
\end{aligned}
\] 
This together with the eigenvalue bound  for $(I+A)$ in \eqref{e:Aconds} gives
\[
\lim_{n \to \infty} \sup_{k} \| \epsy(n,k) \| = 0.
\]
Using this in \eqref{e:psi3error}, we have:
\[
\begin{aligned}
\psi^\circ_{n+k} & = (I+A)^k \psi^\circ_n + \sum_{j=0}^{k-1} (I+A)^j u(n+k) 
\\
& = \sum_{j=0}^{\infty} (I+A)^j u(n+k) + o(1) + O(\rho^k),
\end{aligned}
\]
where $0 < \rho < 1$. Applying \eqref{e:u_n_bd} once more gives
\[
\begin{aligned}
\psi_n 
=
\psi^\circ_n +o(1) 
& = -A^{-1} u(n) + o(1)
\end{aligned}
\]
Substituting the definition of $u(n)$ from \eqref{e:psi3un} gives the desired result:
\[
\psi_{n}   = -\Sigma_n^{11}  - A^{-1}  \Sigma_\infty^{22}    + o(1)
\] 

The final approximation for the $\{\Sigma_n^{11}\}$ recursion in
\eqref{e:OffDiagSigma_and_Sigma11_sigma11} follows 
by substituting $\psi_n = \sqrt{n}\Sigma_n^{21}$ and $\psi_n^{\transpose} = \sqrt{n} \Sigma_n^{21}$ in the recursion for $\Sigma_n^{11}$ in \eqref{e:MatrixRecursions} and then using \eqref{e:OffDiagSigma_and_Sigma11_sigma22} and \eqref{e:OffDiagSigma_and_Sigma11_psi}.
\qed

\newpage

\subsection{Bounds for NeSA}

Assumption (N2) ensures that the following representation is well defined:
\[
[I -\clL]^{-1} = \sum_{k=0}^\infty \clL^k
\]
The matrix $H= [I -\clL]^{-1} (Q)$ solves the Lyapunov equation $H = \clL(H) + Q$ for any matrix $Q$.
The solution to \eqref{e:SigLim22An} is thus  
\begin{equation} 
\Sigma_\infty^{22} = [I -\clL]^{-1}( \Sigma^{\Noise})\,. 
\label{e:SigLim22An_inverse}
\end{equation}

Based on \eqref{e:accSAn_Zap} and \eqref{e:fLin}, the pair of sequences $\{\tiltheta_n\}$ and $\{\Delta \theta_n \}$ for the NeSA algorithm satisfy the following recursion:
\begin{gather}
	\begin{bmatrix} 
		\tiltheta_{n+1} \\ \Delta \theta_{n+1}
	\end{bmatrix}
	=
	\begin{bmatrix}
		I & (I + A_{n+1}) \\
		0 &  (I+ A_{n+1})
	\end{bmatrix}
	\begin{bmatrix} 
		\tiltheta_n \\ \Delta \theta_n
	\end{bmatrix}
	+
	\alpha_{n+1}
	\left[
	\begin{bmatrix}
		A & 0 \\
		A & 0
	\end{bmatrix}
	\begin{bmatrix}
		\tiltheta_n \\
		\Delta \theta_n
	\end{bmatrix}
	+
	\begin{bmatrix}
		\Noise_{n+1} \\
		\Noise_{n+1}
	\end{bmatrix}
	\right]
	\label{e:TilThetaDeltaThetaAn}
\end{gather}

Recall the definitions of $\{\Phi_n\} $ and $\{\Delta_n^{\phi} \}$:
\begin{equation}
\Phi_n 
\equiv \begin{bmatrix}
\Phi^1_n
\\  
\Phi^2_n
\end{bmatrix}
\eqdef  \begin{bmatrix}
\sqrt{n} \tiltheta_n 
\\  
n \Delta \theta_n
\end{bmatrix}
\quad 
\qquad
\Noise^\Phi_n \eqdef  \begin{bmatrix}
\Noise_n
\\  
\sqrt{n}  \Noise_n
\end{bmatrix} 
\end{equation}
Multiplying the first $d$ rows on both sides of \eqref{e:TilThetaDeltaThetaAn} by $\sqrt{n+1}$, and the last $d$ rows by $(n+1)$, and using the Taylor series approximation for $\sqrt{n+1}$ in \eqref{e:sqrtn}, we obtain a state space recursion for the normalized error sequence:
\begin{equation}
\Phi_{n+1} = \haM_{n+1} \Phi_n + \alpha_{n+1} (\haB_{n+1} \Phi_n +  {(\sqrt{n+1})} \Noise_{n+1}^{\Phi} + \epsy^\Phi_n),
\label{e:PhinAn}
\end{equation}
where  the $2d \times 2d$ matrices $\haM_{n+1}$ and $\haB_{n+1}$ are defined as:
\begin{equation}
\haM_{n+1} 
\eqdef \begin{bmatrix}
I &  \frac{1}{\sqrt{n}} {(I + A_{n+1})} \\ \\
\frac{1}{ \sqrt{n}} { A}  &  I + A_{n+1}
\end{bmatrix}
\qquad \qquad
\haB_{n+1}  \eqdef
\begin{bmatrix}
\half I + A  &   0 \\ \\
0 &  I + A_{n+1}
\end{bmatrix}
\label{e:MnBnDef}
\end{equation}
and
\begin{equation}
\epsy^\Phi_n
\eqdef
\frac{1}{\sqrt{n}} 
\begin{bmatrix}
{ O(  \| \tiltheta_n \| + \|     \theta_n\|) }
\\
\\
0
\end{bmatrix}
\end{equation}
To begin the covariance analysis, consider first the outer-products   without   expectation:
\begin{equation}
\haSigma_n \eqdef \Phi_n \Phi_n^\transpose \qquad \qquad  {\haSigma^{\Noise}_n}
\eqdef
\Noise_n \Noise_n^\transpose
\label{e:haSigman}
\end{equation}
along with two more $2d \times 2d$ matrix sequences:
\[
{\haSigma^{\Noise_\Phi}_n}
\eqdef 
\Noise^\phi_n \big(\Noise^\phi_n \big)^\transpose = 
\begin{bmatrix}
\haSigma^{\Delta}_n &  \sqrt{n} \haSigma^{\Delta}_{n} \\ \\
\sqrt{n} \haSigma^{\Delta}_n  &  n  \haSigma^{\Delta}_n
\end{bmatrix}
\quad
\textit{and}
\quad
W^\transpose_{n+1} \eqdef  
(\sqrt{n+1}) \Phi_n \big( \Delta_{n+1}^{\phi} \big)^\transpose
\]

Then, based on \eqref{e:PhinAn}, {ignoring terms of the order $O(1/n^{3/2})$}, the $2d \times 2d$ matrix sequence $\{\haSigma_n\}$ satisfies the following recursion:
\begin{equation}
\begin{aligned}
\haSigma_{n+1} = \haM_{n+1} \haSigma_n \haM_{n+1}^\transpose + \alpha_{n+1} \Big( & \haB_{n+1} \haSigma_n  \haM_{n+1}^\transpose + \haM_{n+1} \haSigma_n \haB_{n+1}^\transpose
\\ 
& + {\haSigma^{\Noise_\Phi}_{n+1}} + \haM_{n+1} \haW_{n+1}^\transpose + \haW_{n+1} \haM_{n+1}^{\transpose} \Big).
\end{aligned}
\label{e:BigCovRechaSigma}
\end{equation}

Taking expectations results in these interdependent matrix recursions:
\begin{equation}
\begin{aligned}
\Sigma_{n+1}^{11} &  = \Sigma_n^{11}  
+ \frac{1}{\sqrt{n}} \Bigl( \Sigma_n^{12} (I+A)^\transpose +  (I+A) \Sigma_n^{21} \Bigr)
\\
&
+ \alpha_{n+1} \Big( (\half I + A) \Sigma_n^{11} + \Sigma_n^{11} (\half I + A)^\transpose + \ell^{22}_n + \Sigma^{\Noise} + \Sigma^W_n + {\Sigma^W_n}^\transpose + o(1) \Big)
\\
\vspace{0.5in}
\\
\Sigma_{n+1}^{12} &  = {\Sigma_n^{12} (I+A)^\transpose} +  \frac{1}{n} (I+A) \Sigma_n^{21} A^\transpose
+  \frac{1}{\sqrt{n}} \Bigl( \Sigma_n^{11} A^\transpose +  \ell_n^{22}   +  \Sigma^{\Noise} + \Sigma^W_n + {\Sigma^W_n}^\transpose  \Bigr)
\\
&
+ \alpha_{n+1} \Bigg( \frac{(\half I + A) \Sigma_n^{11} A^\transpose}{\sqrt{n}} 
+ \Big( \frac{3}{2} I + A \Big)\Sigma_n^{12} (I + A)^\transpose
+ \frac{ \ell^{22}_n }{\sqrt{n}}
+ o(1) \Bigg)
\\
\vspace{0.5in}
\\
\Sigma_{n+1}^{21} &  = (\Sigma_{n+1}^{12})^\transpose
\\
\vspace{0.5in}
\\
\Sigma_{n+1}^{22} &  =   \ell^{22}_n 
+ \bigg (\Sigma^{\Noise} + \Sigma^W_n + {\Sigma^W_n}^\transpose \bigg)
+ \frac{A \Sigma_n^{12} (I+A)^\transpose}{\sqrt{n}} + \frac{(I+A) \Sigma_n^{21} A^\transpose}{\sqrt{n}}
+ \frac{A \Sigma_n^{11} A^\transpose}{n}  
\\
&
+ \alpha_{n+1} \Bigg( \frac{A \Sigma_n^{12} (I+A)^\transpose}{\sqrt{n}} 
+
\frac{(I+A)\Sigma_n^{21} A^\transpose}{\sqrt{n}}
+ 2  \ell_n^{22} + O(1) \Bigg)
\end{aligned}
\label{e:MatrixRecursionsAn}
\end{equation}
where $
\Sigma^W_n \eqdef \Expect [ (I + A_{n+1}) \Phi_n^1 \Noise_{n+1}^\transpose  ]
$,  and
$\ell^{22}_n = \Expect [ \clL(\haSigma_n^{22}) ]$ with   $\clL$ is defined in \eqref{e:clL}.

Linearity implies that expectation and operation can be interchanged:
\[
\ell^{22}_n =   \clL(\Expect [ \haSigma_n^{22} ] ) = \clL(\Sigma_n^{22})
\]

Two more linear operators are required in the following:     
For any $Q \in \Re^{d\times d}$ define 
\begin{equation}
\tilclL(Q) \eqdef  \Expect [ (\tilA_{n+1}) {Q} \tilA_{n+1}^\transpose]\,.
\label{e:bfmtilEll}
\end{equation}
The second operator maps vectors to matrices: for any $v \in \Re^d$,
\begin{equation}
\clM(v) = \Expect [ (I + A_{n+1}) v (\tilA_{n+1} \theta^* - \tilb_{n+1} )^\transpose  ]
\label{e:clM}
\end{equation}

The following result will be used to show that $\{\Sigma^W_n\}$ converges to $0$:
\begin{lemma}
	Under (N1)--(N3) we have  $\lim_{n\to\infty}  
	\Expect[\Phi_n] =0$.  
	\label{t:Phin}
\end{lemma}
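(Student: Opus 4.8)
The plan is to take expectations in the NeSA recursion \eqref{e:TilThetaDeltaThetaAn} and exploit the martingale-difference structure to reduce the problem to a \emph{deterministic} linear recursion for the means. Write $u_n \eqdef \Expect[\tiltheta_n]$ and $v_n \eqdef \Expect[\Delta\theta_n]$, so that $\Expect[\Phi_n] = (\sqrt{n}\,u_n^\transpose, \, n\,v_n^\transpose)^\transpose$ and the claim $\lim_n \Expect[\Phi_n]=0$ amounts to $\sqrt{n}\,u_n \to 0$ and $n\,v_n \to 0$. Since $\tiltheta_n$ and $\Delta\theta_n$ are $\clF_n$-measurable while $\{\tilA_n\}$, $\{\tilb_n\}$ are martingale differences by (N1), conditioning on $\clF_n$ replaces $A_{n+1}$ by $A$ and annihilates the noise: using $A\theta^*=b$ (i.e.\ $\barf(\theta^*)=0$) one checks $\Expect[\Noise_{n+1}\mid\clF_n]=0$. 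Taking expectations in \eqref{e:TilThetaDeltaThetaAn} therefore yields the coupled deterministic recursion
\[
v_{n+1} = (I+A)v_n + \alpha_{n+1} A u_n, \qquad u_{n+1} = u_n + v_{n+1}.
\]

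Next I would record the a priori bounds coming from (N3). Boundedness of $\Sigma_n^{11}$ and $\Sigma_n^{22}$ gives $\Expect\|\tiltheta_n\|^2 = O(1/n)$ and $\Expect\|\Delta\theta_n\|^2 = O(1/n^2)$, so by Jensen's inequality $\|u_n\| = O(n^{-1/2})$ and $\|v_n\| = O(1/n)$. These bounds only show $\Expect[\Phi_n]=O(1)$; upgrading this to genuine convergence is the crux. The tool is the Schur stability of $I+A$ guaranteed by (N2)/\eqref{e:Aconds}, i.e.\ $\|(I+A)^k\| \le C\rho^k$ for some $\rho\in(0,1)$. First, the $v$-recursion is stable and driven by $\alpha_{n+1}A u_n = O(n^{-3/2})$, which sharpens the bound to $\|v_n\| = O(n^{-3/2})$. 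Then I would extract the quasi-static part by writing $v_n = -\alpha_n u_{n-1} + \delta_n$; substituting into the recursion shows $\delta_n$ obeys a stable recursion forced by $(I+A)(\alpha_{n+1}u_n - \alpha_n u_{n-1}) = O(n^{-5/2})$, whence $\|\delta_n\| = O(n^{-5/2})$ and
\[
v_{n+1} = -\alpha_{n+1} u_n + O(n^{-5/2}).
\]

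Plugging this into $u_{n+1} = u_n + v_{n+1}$ collapses the matrix recursion to the scalar-type form $u_{n+1} = (1-\alpha_{n+1})u_n + O(n^{-5/2})$. Setting $p_n \eqdef n\,u_n$ gives $p_{n+1} = p_n + O(n^{-3/2})$, so $\{p_n\}$ is Cauchy and converges; hence $u_n = O(1/n)$ and $\sqrt{n}\,u_n = O(n^{-1/2}) \to 0$. Finally $n\,v_n = -u_{n-1} + O(n^{-3/2}) \to 0$, and both blocks of $\Expect[\Phi_n]$ vanish, completing the proof.

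The main obstacle is exactly the gap flagged above: (N3) alone yields only boundedness of $\Expect[\Phi_n]$. The passage to convergence is delicate because the coefficient that naively governs $u_n$ (the matrix $\tfrac12 I + A$ appearing in \eqref{e:MnBnDef}) need not be Hurwitz, so one cannot simply appeal to an ODE/stability argument for the first block. Instead the decay must be extracted from the tight cancellation $v_{n+1}\approx -\alpha_{n+1}u_n$ — which is precisely what Schur stability of $I+A$ supplies — turning the $u$-recursion into one whose homogeneous part decays like $1/n$.
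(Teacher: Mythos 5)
Your proof is correct, and its skeleton coincides with the paper's own: both pass to expectations of \eqref{e:TilThetaDeltaThetaAn} (using (N1) and $A\theta^*=b$ so that $\Expect[\Noise_{n+1}\mid\clF_n]=0$), both use the Schur stability of $I+A$ from \eqref{e:Aconds} to slave the momentum block to its quasi-static value, and both extract convergence of the first block from the resulting cancellation of $A$. In the paper's normalized coordinates this reads $\barPhi^2_n \approxeq -\tfrac{1}{\sqrt{n}}\barPhi^1_n$, obtained by unrolling the recursion and summing the geometric series $\sum_k (I+A)^k = -A^{-1}$; substituting into the $\barPhi^1$ recursion, the $A$'s cancel between the $\half I + A$ of \eqref{e:MnBnDef} and the $-(I+A)$ fed back from the second block, leaving $\barPhi^1_{n+1}\approxeq \bigl(1-\tfrac{1}{2(n+1)}\bigr)\barPhi^1_n$, after which the paper appeals to stochastic approximation theory for a zero-noise recursion. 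Your quasi-static substitution $v_{n+1}=-\alpha_{n+1}u_n+\delta_{n+1}$ is exactly the unnormalized form of this relation ($nv_n\approx -u_{n-1}$ is $\barPhi^2_n\approx -\barPhi^1_n/\sqrt{n}$), and your effective recursion $u_{n+1}=(1-\alpha_{n+1})u_n+O(n^{-5/2})$ is the same contraction seen without the $\sqrt{n}$ normalization.

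Where you genuinely differ is in execution, and your version buys something. You make explicit where (N3) enters (Jensen gives $u_n=O(n^{-1/2})$, $v_n=O(1/n)$ a priori, which the paper leaves implicit), you track every error polynomially — the bootstrap $v_n=O(n^{-3/2})$, then $\delta_n=O(n^{-5/2})$ via the stable recursion driven by $(I+A)(\alpha_{n+1}u_n-\alpha_n u_{n-1})$, which I have checked is exactly the forcing term — whereas the paper writes ``$\approxeq$'' and asserts each error is a constant times $\|\Phi_n\|/n$. Your closing step is also more elementary: the telescoping/Cauchy argument on $p_n = n\,u_n$ replaces the ODE-limit appeal, and it delivers rates the paper's proof does not state, namely $\Expect[\sqrt{n}\,\tiltheta_n]=O(n^{-1/2})$ and $\Expect[n\,\Delta\theta_n]=O(1/n)$, strictly more than the bare limit asserted in the lemma. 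Your diagnosis of the crux is also accurate: since \eqref{e:Aconds} permits $\text{Real}(\lambda)\in(-\half,0)$, the matrix $\half I + A$ need not be Hurwitz, so no stability argument on the first block alone can work; the decay must come from the cancellation supplied by the slaved momentum block, which is precisely the mechanism in both proofs.
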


\begin{proof}
	Denote $\barPhi_n = \Expect[ \Phi_n]$, and similarly, $\barPhi^1_n = \Expect[ \Phi^1_n]$, $\barPhi^2_n = \Expect[ \Phi^2_n]$.
	Based on \eqref{e:PhinAn}, we have:
	\begin{equation}
	\begin{aligned}
	\barPhi^2_{n+1} = & \,\, \Big (1 + \frac{1}{n + 1} \Big ) \big(I + A) \barPhi_n^2 + \frac{1}{\sqrt{n}} A \barPhi_n^1
	\\
	\approxeq  &  \sum_{k = 0}^{n - 1} \big(I + A)^k \frac{1}{\sqrt{n - k}} A \barPhi_{n - k}^1
	\\
	\approxeq  &   \frac{1}{\sqrt{n}} \bigg(  \sum_{k = 0}^{\infty} \big(I + A)^k \bigg) A \barPhi_{n}^1
	\\
	=  &   - \frac{1}{\sqrt{n}} A^{-1} A \barPhi_{n}^1
	\\
	=  &   - \frac{1}{\sqrt{n}} \barPhi_{n}^1
	\end{aligned}
	\label{e:Phin2}
	\end{equation}
	In \eqref{e:Phin2}, each of the approximations can be shown rigorously, using techniques that are very similar to the ones that were used in \Section{s:recursions}: The error in each approximation can be bounded as a constant times $\|\Phi_n\|/n$. 
	
	Similarly, the recursion for $\barPhi^1_n$ satisfies:
	\begin{equation}
	\begin{aligned}
	\barPhi^1_{n+1} = & \,\, \barPhi_n^1 + \frac{1}{\sqrt{n}} (I + A) \barPhi_n^2 +  \frac{1}{n + 1} \big( \half I + A) \barPhi_n^1
	\\
	\approxeq  &  \barPhi_n^1 - \frac{1}{{n}} (I + A) \barPhi_n^1 +  \frac{1}{n + 1} \big( \half I + A) \barPhi_n^1
	\\
	\approxeq  &  \barPhi_n^1 -  \frac{1}{2(n + 1)} \barPhi_n^1
	\end{aligned}
	\end{equation}
	The above recursion can be viewed as a stochastic approximation algorithm (with $0$ noise). This implies:
	$
	\lim_{n\to\infty}  
	\Expect[\Phi^1_n] =0
	$.  Using this in \eqref{e:Phin2}, we also have $
	\lim_{n\to\infty}  
	\Expect[\Phi_n] =0
	$.
\end{proof}

\begin{lemma}
	Under (N1)--(N3) we have  $\lim_{n\to\infty}  
	\Sigma^W_n =0$.  
	\label{t:SigmaWn}
\end{lemma}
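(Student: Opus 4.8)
The plan is to expand the definition $\Sigma^W_n = \Expect[(I+A_{n+1})\Phi^1_n \Noise_{n+1}^\transpose]$ using $\Phi^1_n = \sqrt{n}\,\tiltheta_n$ together with the decomposition $\Noise_{n+1} = \tilA_{n+1}\tiltheta_n + \Noise_{n+1}^*$ from \eqref{e:SigmaNoise}, and then to exploit the martingale-difference structure of $\{\tilA_n,\tilb_n\}$ in (N1) by conditioning on $\clF_n$. Since $\tiltheta_n$ is $\clF_n$-measurable, the tower property gives
\[
\Sigma^W_n = \sqrt{n}\,\Expect\Big[\,\Expect[(I+A_{n+1})\tiltheta_n\tiltheta_n^\transpose\tilA_{n+1}^\transpose \mid \clF_n] + \Expect[(I+A_{n+1})\tiltheta_n (\Noise_{n+1}^*)^\transpose \mid \clF_n]\,\Big].
\]

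In the first inner conditional expectation I would write $I+A_{n+1} = (I+A) + \tilA_{n+1}$; the deterministic part vanishes because $\Expect[\tilA_{n+1}^\transpose\mid\clF_n]=0$, leaving exactly $\tilclL(\tiltheta_n\tiltheta_n^\transpose)$ with $\tilclL$ as in \eqref{e:bfmtilEll}. The identity $\Expect[\tilA_{n+1} Q \tilA_{n+1}^\transpose\mid\clF_n]=\tilclL(Q)$ follows from the conditional hypothesis on $\clL$ in (N1) after subtracting the deterministic term $(I+A)Q(I+A)^\transpose$. The second inner conditional expectation is, by definition \eqref{e:clM} and the conditional second-order stationarity in (N1), precisely $\clM(\tiltheta_n)$, where I use $\Noise_{n+1}^* = \tilA_{n+1}\theta^* - \tilb_{n+1}$ (a consequence of \eqref{e:SigmaNoise} and $A\theta^* = b$). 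Taking the outer expectation and using linearity of $\tilclL$ and $\clM$ yields
\[
\Sigma^W_n = \sqrt{n}\,\tilclL\big(\Expect[\tiltheta_n\tiltheta_n^\transpose]\big) + \sqrt{n}\,\clM\big(\Expect[\tiltheta_n]\big) = \frac{1}{\sqrt{n}}\,\tilclL(\Sigma_n^{11}) + \clM(\barPhi^1_n),
\]
where I have substituted $\Sigma_n^{11} = n\,\Expect[\tiltheta_n\tiltheta_n^\transpose]$ and $\barPhi^1_n = \Expect[\Phi^1_n] = \sqrt{n}\,\Expect[\tiltheta_n]$.

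It then remains to send both terms to zero. The first term is $O(n^{-1/2})$ because $\{\Sigma_n^{11}\}$ is bounded by (N3) and $\tilclL$ is a bounded linear operator. The second term vanishes because $\barPhi^1_n\to 0$ by \Lemma{t:Phin} and $\clM$ is continuous, being linear on a finite-dimensional space. Hence $\Sigma^W_n\to 0$, which is the claim.

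The decomposition itself is mechanical; the delicate points are twofold. First, matching the two inner conditional expectations to the fixed operators $\tilclL$ and $\clM$ requires a conditional second-order stationarity statement that goes slightly beyond the conditional identity literally stated for $\clL$ in (N1); this must either be derived from (N1) together with wide-sense stationarity, or adjoined as a hypothesis. Second, the bookkeeping of the $\sqrt{n}$ factors is essential: the $\tilclL$ term carries $\Expect[\tiltheta_n\tiltheta_n^\transpose] = O(1/n)$ and is therefore $O(1/\sqrt{n})$, whereas the $\clM$ term carries only the \emph{first} moment $\Expect[\tiltheta_n]$, so the surviving $\sqrt{n}$ is exactly absorbed into $\barPhi^1_n$ and convergence there hinges entirely on \Lemma{t:Phin} rather than on any variance decay.
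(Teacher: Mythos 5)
Your proof is correct and is essentially the paper's own argument: the same decomposition $\Noise_{n+1}=\tilA_{n+1}\tiltheta_n+\Noise^*_{n+1}$, the same conditioning on $\clF_n$ to identify the two contributions with $\tilclL$ and $\clM$ (including the same implicit reliance on conditional second-order stationarity, which the paper glosses over just as you flag), the same $O(n^{-1/2})$ bound from (N3) for the first term, and the same appeal to \Lemma{t:Phin} for the $\clM$ term. Your bookkeeping $\Phi^1_n\tiltheta_n^\transpose = n^{-1/2}\haSigma_n^{11}$, giving $n^{-1/2}\tilclL(\Sigma_n^{11})$, in fact corrects an apparent typo in the paper's proof, which writes $\haSigma_n^{21}$ and $n^{-1}\tilclL(\psi_n)$ at that step; either block is bounded under (N3), so the conclusion is unchanged.
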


\begin{proof}  
	Using the definition of $\Noise_n$ in \eqref{e:SigmaNoise} gives
	\[
	\begin{aligned}
	\Sigma^W_n & 
	= \Expect [ (I + A_{n+1}) \Phi_n^1 \tiltheta_n^\transpose \tilA_{n+1}^\transpose] + \Expect [ (I + A_{n+1}) \Phi_n^1 (\tilA_{n+1} \theta^* - \tilb_{n+1})^\transpose  ]
	\\
	\\
	& = \Expect [ (\tilA_{n+1}) \Phi_n^1 \tiltheta_n^\transpose \tilA_{n+1}^\transpose] + \Expect [ (I + A_{n+1}) \Phi_n^1 (\tilA_{n+1} \theta^* - \tilb_{n+1} )^\transpose  ]
	\\
	\\
	& = \frac{1}{\sqrt{n}} \Expect \Big[ \Expect \big [ (\tilA_{n+1}) {\haSigma_n^{21}} \tilA_{n+1}^\transpose \big | \clF_n \big]  \Big] + \Expect \Big [ \Expect \big [(I + A_{n+1}) \Phi_n^1 (\tilA_{n+1} \theta^* - \tilb_{n+1} )^\transpose \big | \clF_n \big ]  \Big ]
	\\
	\\
	& = \frac{1}{\sqrt{n}} \Expect \big [ \tilclL({\haSigma_n^{21}})  \big ] + \Expect \big [ \clM(\Phi_n^1) \big ]
	\end{aligned}
	\]
	where the first equality follows from the fact that $\Expect [ \Phi_n^1 \tiltheta_n^\transpose \tilA_{n+1}^\transpose] = 0$,  second equality follows from the definition of $\haSigma^{21}$, and the last equality follows from (N1) and definitions \eqref{e:bfmtilEll} and \eqref{e:clM}.

	Linearity of $\tilclL$ and $\clM$ then implies
	\[
	\begin{aligned}
	\Sigma^W_n
	& = \frac{1}{\sqrt{n}}  \tilclL( \Expect[{\haSigma_n^{21}}]) +   \clM( \Expect [\Phi_n^1] )
	\\
	& = \frac{1}{\sqrt{n}}  \tilclL({\Sigma_n^{21}}) +   \clM( \Expect [\Phi_n^1] )
	\\
	&= \frac{1}{n} \tilclL({\psi_n}) + \clM(\Expect[\Phi_n^1])
	\end{aligned}
	\]
	where the last equality used the definition  $\psi_n = \sqrt{n} \Sigma^{21}_n$.
	
	The first term vanishes under (N3) since $\psi_n/n=O(n^{-1/2})$.


	It remains  to show that the second term converges to $0$.
	Based on \eqref{e:PhinAn} it is straightforward to establish the following limit under (N1)--(N2):
	\[
	\lim_{n\to\infty}\Expect[\Phi_n] =0
	\]

	\ad{\rd{I rechecked; It looks like the $(1/\sqrt{n})$ comes from $\clM(\Delta_{n+1})$. Should I write it out? Or just mention it? I will mention it for sure.  
			\\
			Things are a bit subtle! Looks like we have to say why $\Expect[\clM(\Phi_n^1)] = \clM(\Expect[\Phi_n^1])$ but not the case for $\Delta_{n+1}$.
			\\
			SPM does not understand}}
	Using the definition \eqref{e:clM}, and taking expectations completes the proof that $\Expect  [ \clM(\Phi_n^1)   ]  = o(1)$. 
\end{proof}

\spm{\rd{Urgent!}  How did you eliminate $Z_n=\Phi^2_n$?
	To resolve this, 
	I have cheated and simply stated it is obvious -- surely is!  But I haven't checked.}
\spm{$\Delta_n$ has mean zero -- martingale difference.
	\\
	\rd{where the $O(1/\sqrt{n})$ terms is due to $\Expect[\clM(\Delta_{n+1})]$.}
}

\vspace{-0.1in}

\subparagraph{Proof of \Lemma{t:OffDiagSigmaAn}}
We first prove the first recursion in \eqref{e:OffDiagSigmaAn}.
Based on the assumption that each $\Sigma^{11}$, $\Sigma^{22}$ and $\psi_n = \sqrt{n} \Sigma_n^{21}$ in \eqref{e:MatrixRecursionsAn} are bounded, the recursion for $\Sigma_n^{22}$ can be written as
\begin{equation*}
	\begin{aligned}
		\Sigma_{n+1}^{22}  =   \clL \big( \Sigma_n^{22} \big) + \Sigma^{\Noise} + \Sigma^W_n + { \Sigma^W_n }^\transpose
		+ O(\alpha_{n+1}),
	\end{aligned}
	\label{e:Sigma22nAn}
\end{equation*}
where the $O(\alpha_{n+1})$ term includes all terms in the recursion that are multiplied with $\alpha_{n+1}$.
\Lemma{t:SigmaWn} then implies the first recursion in \eqref{e:OffDiagSigmaAn},
\[
\Sigma_{n+1}^{22}  =   \clL \big( \Sigma_n^{22} \big) + \Sigma^{\Noise} + O(1/\sqrt{n})
\]

\spm{please no contraction operators!
	\bl{which states that $\clL(\cdot)$ is a contraction operator},
}

This is regarded as a Lyapunov recursion with time varying forcing term $\Sigma^{\Noise} + O(1/\sqrt{n})$.
Under (N2) convergence follows, giving \eqref{e:SigLim22An}.

We next prove that the second recursion in \eqref{e:OffDiagSigmaAn} holds.
Multiplying both sides of the recursion for $\Sigma_n^{21}$ in \eqref{e:MatrixRecursionsAn} by $\sqrt{n+1}$, and using the Taylor series approximation \eqref{e:sqrtn}, we obtain:
\[
\begin{aligned}
\psi_{n+1} &  = (I+A) \psi_n + A \Sigma_n^{11} +  \clL(\Sigma_n^{22}) + \Sigma^{\Noise} + \Sigma^W_n + {\Sigma^W_n}^\transpose + O(\alpha_{n+1}),
\end{aligned}
\]
where once again, the $O(\alpha_{n+1})$ terms are due to the boundedness assumption on $\Sigma^{11}$, $\Sigma^{22}$ and $\psi_n$.
From \Lemma{t:SigmaWn}, we have $\Sigma_n^W \to 0$, and furthermore, using $\Sigma_n^{22} = \Sigma_{\infty}^{22} + o(1)$,
\[
\begin{aligned}
\psi_{n+1}  & =  (I+A) \psi_n + A \Sigma_n^{11} + \clL(\Sigma_{\infty}^{22}) + \Sigma^{\Noise} + o(1)
\\
& = (I+A) \psi_n  +  A \Sigma_n^{11} + \Sigma_{\infty}^{22} + o(1)
\end{aligned}
\]
Once we have the above form, the rest of the proof follows steps exactly same as the proof of \Lemma{t:OffDiagSigma}: By viewing the recursion as state evolution of a discrete-time stable linear system with bounded input sequence $\{A \Sigma_n^{11} + \Sigma_{\infty}^{22}\}$ and vanishing additive noise, we can show that $\{\psi_n\}$ satisfies the second recursion in \eqref{e:OffDiagSigmaAn}.
\qed

\subparagraph{Proof of \Lemma{t:SAforCovarAn}}
The Lemma follows directly by substituting $\psi_n = \sqrt{n}\Sigma_n^{21}$ and $\psi_n^{\transpose} = \sqrt{n} \Sigma_n^{21}$ in the recursion for $\Sigma_n^{11}$ in \eqref{e:MatrixRecursionsAn} and then using \eqref{e:OffDiagSigmaAn}.
\qed

\newpage

\section{NeSA and PolSA TD-learning algorithms}
\label{s:TD_appendix}

\def\elig{\zeta}
\def\bfelig{\bfmath{\zeta}}

\def\diagpie{\Pi}
\def\pie{\varpi}

\def\hab{\widehat b}

\def\Ascale{\zeta}

\def\barf{\overline{f}}

In this section of the Appendix, we briefly give details on how to apply the algorithms introduced in this paper to solve value-function estimation problems in Reinforcement Learning. For simplicity, we  consider the TD($\lambda$)-learning algorithm with $\lambda = 0$. Extension to $\lambda \in (0,1]$ is straightforward.

Consider a Markov chain $\bfmX$ evolving on $\state \in \Re^\ell$. 
Let $\{P^n\}$ denote its transition semigroup:
For each $n\ge 0$, $x\in\state$, and  $A\in \clB(\state)$ (where $\clB(\cdot)$ denotes the Borel set),
\[
P^n(x,A):=\Prob_x\{X_n\in A\}:=\Pr\{X_n\in A\,|\,X_0=x\}.
\]
The standard operator-theoretic notation is used for conditional expectation:  
for any measurable function $f \colon \state \to \Re$,  
\[
P^n f\, (x) = 
\Expect_x [f(X_n)  ]  \eqdef
\Expect [f(X_n) \mid X_0 = x].
\]
In a finite state space setting, $P^n$ is the $n$-step transition probability matrix of the Markov chain, and the conditional expectation appears as matrix-vector multiplication:
\[
P^n f\, (x) = \sum_{x'\in\state} P^n(x,x') f(x'),\qquad x\in\state.
\]

Let $c\colon\state\to\Re_+$ denote a cost function, and $\beta\in (0,1)$ a discount factor.
The discounted-cost value function is defined as  
\[
h(x) = \sum_{n=0}^\infty \beta^n P^n c(x)\,, \qquad  x \in \state
\] 
It is known that the value function is the unique solution to the Bellman equation
\begin{equation}
c(x) + \beta P h(x) = h (x)
\label{e:DiscFish}
\end{equation}
Consider the case of a $d$-dimensional linear parameterization: A function $\basis \colon\state\to\Re^d$ is chosen,  which is viewed as a collection of $d$ basis functions. Given a parameter vector $\theta \in \Re^d$, the corresponding approximation to the value function is defined as:
\[
h^\theta (x) = \sum_i \theta_i \basis_i (x) = \theta^\transpose \basis(x)
\]

The goal of TD-learning is to approximate the solution to \eqref{e:DiscFish} by $h^\theta(x)$ \cite{sut88,tsiroy97a}. In particular, the TD($0$) algorithm intends to solve the Galerkin relaxation of the problem \cite{sze10,devmey17a}: Find $\theta^*$ such that
\begin{equation} 
0 = \Expect\bigl[  \bigl( -{{{h}}}^{\theta^*} (X_n) + c(X_n)+ \beta {{{h}}}^{\theta^*} (X_{n+1}) \bigr)   \basis_i(X_n) \bigr] \, , \quad 1\le i\le d\,,
\label{e:MetricGalTD} 
\end{equation}
where the expectation is with respect to the steady state distribution of the Markov chain.
This model is of the form considered in \Prop{t:snr},  with Markov chain defined by 
$\process_n= (X_n, X_{n-1})$, and $\barf(\theta) = A\theta - b$, with  $A\eqdef \Expect[A_n ] $,   $b\eqdef   \Expect [b_n]  $ (expectations in steady-state), and
\begin{equation}
A_{n}  \eqdef \basis(X_{n-1}) \big(  \beta \basis(X_{n}) - \basis(X_{n-1})\big)^\transpose \,,  \qquad   b_n  \eqdef - \basis(X_n) c(X_n)
\label{e:AbTDdef}
\end{equation}

The TD($0$) algorithm is stochastic approximation in the form \eqref{e:SAa}, with $G_n \equiv I$.
\paragraph{TD($0$) algorithm:}
For initialization $\theta_0 \in \Re^d  $, the sequence of estimates are defined recursively:  
\begin{equation}
\begin{aligned}
\theta_{n+1} & = \theta_n + \alpha_{n+1} \basis(X_n)  d_{n+1} 
\\
d_{n+1} & =  c(X_n) +  \bigl[\beta \basis(X_{n+1}) -  \basis(X_n) \bigr]^\transpose  \theta_n
\label{e:TD0}
\end{aligned}
\end{equation}
The sequence $\{d_n\}$ also appears in the algorithms described next.    
The parameter recursion can  be expressed in the more suggestive form
\[
\theta_{n+1}  = \theta_n + \alpha_{n+1} [A_{n+1}\theta_n-b_n]
\]

\ad{\rd{Important! Do I need to talk about invertibility and all those things!?}
	\\
	SM to AD:  minimal description needed}
The LSTD algorithm of \cite{boy02} is   a stochastic approximation algorithm of the form \eqref{e:SAa}, with $G_n$ equal to a Monte-Carlo estimate of   $-A^{-1}$   \cite{devmey17a}.
\paragraph{LSTD($0$) algorithm:}
For initialization $\theta_0 \in \Re^d $, the sequence of estimates are defined recursively:  
\begin{equation}
\begin{aligned}
\theta_{n+1} & = \theta_n - \alpha_{n+1} \haA_{n+1}^{-1} \basis(X_n)  d_{n+1} 
\\
d_{n+1} & =  c(X_n) +  \bigl[\beta \basis(X_{n+1}) -  \basis(X_n) \bigr]^\transpose  \theta_n
\\
\haA_{n+1}  & = \haA_n + \gamma_{n+1} [A_{n+1}-\haA_n]
\label{e:LSTD0}
\end{aligned}
\end{equation}
The non-negative gain sequence $\{\gamma_n\}$ is an ingredient in the ``Zap'' algorithms of \cite{devmey17a,devmey17b},  where it is assumed to satisfy standard assumptions, but is relatively large:
\[
\sum_{n=1}^\infty \gamma_n=\infty \,,
\quad             
\sum_{n=1}^\infty \gamma_n^2 < \infty \,,
\qquad
\lim_{n\to\infty}  \frac{\gamma_n}{\alpha_n} = \infty
\]
There is a single gain sequence in the LSTD($0$) algorithm, $\alpha_n\equiv \gamma_n$,  and in this case the matrix recursion is equivalent to the simple average:
\[
\haA_n = \frac{1}{n} \sum_{i=1}^{n} A_i = \frac{1}{n} \sum_{i=1}^{n} \basis(X_{i-1}) \bigl[\beta \basis(X_{i}) -  \basis(X_{i-1}) \bigr]^\transpose 
\]

The PolSA and NeSA algorithms for TD($0$)-learning are given as follows.  In each case, 
the initialization $\theta_0 \in \Re^d $,  and gain $\Ascale$ satisfying \eqref{e:Aconds}, are pre-specified.
\paragraph{PolSA TD($0$) algorithm:}  
\begin{equation}
\begin{aligned}
\theta_{n+1} & = \theta_n + (I + \Ascale {\haA_{n+1} } ) \Delta \theta_n + \alpha_{n+1} \Ascale \basis(X_n)  d_{n+1} 
\\
d_{n+1} & =  c(X_n) +  \bigl[\beta \basis(X_{n+1}) -  \basis(X_n) \bigr]^\transpose  \theta_n
\label{e:PolSATD0}
\end{aligned}
\end{equation}

\paragraph{NeSA TD($0$) algorithm:} 
\begin{equation}
\begin{aligned}
\theta_{n+1} & = \theta_n + (I + \Ascale {A_{n+1}}) \Delta \theta_n + \alpha_{n+1} \Ascale \basis(X_n)  d_{n+1} 
\\
d_{n+1} & =  c(X_n) +  \bigl[\beta \basis(X_{n+1}) -  \basis(X_n) \bigr]^\transpose  \theta_n
\\
\label{e:NeSATD0}
\end{aligned}
\end{equation}
The matrix momentum term,   highlighted in red,  is the only  difference between the two algorithms.

\end{document}